\documentclass{lms}
\usepackage{amssymb,amsmath}

\newtheorem{thm}{Theorem}[section]
\newtheorem{cor}[thm]{Corollary}
\newtheorem{prop}[thm]{Proposition}

\newtheorem{lem}[thm]{Lemma}

\newtheorem{remar}[thm]{Remark}

\providecommand{\ld}{,\ldots ,}

\providecommand{\ra}{ \rightarrow }

\providecommand{\se}{ \subseteq }
\providecommand{\lan}{ \langle }
\providecommand{\ran}{ \rangle }

\providecommand{\diag}{\mathop{\rm diag}\nolimits}
\providecommand{\End}{\mathop{\rm End}\nolimits}
\providecommand{\Id}{\mathop{\rm Id}\nolimits}
\providecommand{\Irr}{\mathop{\rm Irr}\nolimits}

\providecommand{\Ker}{\mathop{\rm Ker}\nolimits}

\providecommand{\Stab}{\mathop{\rm Stab}\nolimits}
\providecommand{\Ind}{\mathop{\rm Ind}\nolimits}
\providecommand{\CC}{\mathop{\mathbb C}\nolimits}
\providecommand{\FF}{\mathbb{F}}
\providecommand{\ZZ}{\mathbb{Z}}

\providecommand{\Ste}{{\sf {St}}}

\providecommand{\al}{\alpha}
\providecommand{\be}{\beta}
\providecommand{\ga}{\gamma}

\providecommand{\ep}{\varepsilon}

\providecommand{\lam}{\lambda }

\providecommand{\up}{^{-1}}

\providecommand{\si}{\sigma }
\providecommand{\om}{\omega }

\providecommand{\au}{{automorphism }}

\providecommand{\eis}{{eigenvalues }}

\providecommand{\f}{{following }}

\providecommand{\ho}{{homomorphism }}

\providecommand{\ii}{{if and only if }}

\providecommand{\ir}{{irreducible }}

\providecommand{\irr}{{irreducible representation }}

\providecommand{\itf}{{It follows that }}

\providecommand{\mult}{{multiplicity }}

\providecommand{\rep}{{representation }}
\providecommand{\reps}{{representations }}






 \hoffset = -20mm
 \voffset = -10mm
\providecommand{\TC}{{\mathbf T}}

\providecommand{\GC}{{\mathbf G}}
\providecommand{\AAA}{{\sf A}}
\providecommand{\SSS}{{\sf S}}

\providecommand{\bl}{\begin{lem}\label }
\providecommand{\el}{\end{lem}}
\providecommand{\med}{\medskip}
\providecommand{\mod}{\bmod \,}

\marginparsep-0.5cm

\footnotesep6.5pt

\title[Conjugacy action and the Steinberg square]
{Conjugacy action, induced representations and the
Steinberg square for simple groups of Lie type }

\author[G. Heide, J. Saxl, P. H. Tiep, and A. E. Zalesski]
{Gerhard Heide, Jan Saxl, Pham Huu Tiep, and Alexandre E. Zalesski}

\extraline{The third author
gratefully acknowledges the support of the NSF (grants DMS-0901241 and
DMS-1201374).}
\classno{}
\date{March 14, 2012}

\begin{document}
\maketitle

\begin{abstract} Let $G$ be a finite simple group of Lie type, and let
$\pi_G$ be the permutation representation of $G$ associated with
the action of $G$ on itself by conjugation. We prove that every
irreducible representation of $G$ is a constituent of $\pi_G$,
unless $G=PSU_n(q)$ and $n$ is coprime to $2(q+1)$, where
precisely one irreducible representation fails. Let $\Ste$ be the
Steinberg \rep of $G$. We prove that an \irr of $G$ is a
constituent of the tensor square $\Ste\otimes \Ste$, with the same
exceptions as in the previous statement.
\end{abstract}

\section{Introduction}

The two main problems discussed in this paper do not appear at
first sight to be connected. The first concerns the permutation
representation $\pi_G$  of a finite group $G$ in its action on
itself by conjugation (also called the adjoint action). The second
is concerned with the \ir constituents of the tensor square of the
Steinberg representation of a simple group of Lie type. In
addition, we also deal with the existence of a maximal torus $T$
of $G$ such that every \ir representation of $G$ is  a constituent
of the induced representation $\Ind_T^G(1_T)$.

\medskip
The conjugation action of $G$ on itself, the afforded ${\mathbb
C}G$-module $\Pi_G$ and its character $\pi_G$ are standard objects
of study in the \rep  theory of finite groups. For instance, the
submodule of $G$-invariants of this module forms the center of the
group algebra ${\mathbb C}G$. The multiplicity of the trivial
${\mathbb C}G$-module in  $\Pi_G$ equals the number of conjugacy
classes in $G$. Surprisingly, almost nothing is known about the
multiplicities of other \ir modules in $\Pi_G$. The problem has a
long history; the initial question was to determine those
irreducibles of \mult zero. See \cite{R}, \cite{Sch}, \cite{P} and
the bibliography there. Passman restated this problem to that of
determining the kernel $\Delta_G$ of the \rep ${\mathbb C}G\ra
\End(\Pi_G)$, and studied the problem both for finite and infinite
groups $G$. It becomes clear that the problem cannot be answered
in simple terms for arbitrary finite groups.

Significant progress was achieved in \cite{HE}, where
Passman's problem was solved for finite classical simple groups.
For alternating and sporadic simple groups   $\Delta_G$ was proved
to be zero in \cite{HZ}, but a family of unitary groups $PSU_n(q)$
with $n$  coprime to $2(q+1)$ yields examples with $\Delta_G\neq 0$.
(The first example of this, $PSU_3(3)$, was given by Frame in his
review of \cite{R}.)
For other simple groups the problem remained open,
and is finally solved in this paper. In addition, we determine
$\Delta_G$ in the above exceptional cases.

\begin{thm}\label{main1}
Let $G$ be a finite simple group of Lie type, other than
$PSU_n(q)$ with $n \geq 3$ coprime to $2(q+1)$. Then
$\Delta(G)=0$, that is, every complex irreducible character of $G$
is a constituent of the conjugation permutation character of $G$.

In the exceptional case, $\Delta(G)\cong Mat_m(\mathbb C)$, where
$m=(q^n-q)/(q+1)$, that is,  every complex irreducible character
occurs, except precisely the (unique) irreducible character of
dimension $(q^n-q)/(q+1)$.
\end{thm}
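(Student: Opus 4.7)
The plan is to exploit the decomposition
$$\pi_G=\sum_{C}\Ind_{C_G(c_C)}^{G}(1),$$
summed over conjugacy classes $C$ of $G$ with representative $c_C$. Each summand is a genuine character, so to show that a given $\chi\in\Irr(G)$ occurs in $\pi_G$ it suffices to exhibit a single $C$ for which $\langle\Ind_{C_G(c_C)}^{G}(1),\chi\rangle>0$. In particular, whenever $c_C$ is a regular semisimple element with centralizer a maximal torus $T$, $\Ind_T^G(1_T)$ is a direct summand of $\pi_G$, opening the way to the Deligne--Lusztig machinery.

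Following the strengthening flagged in the introduction, I would first aim to prove that for each simple $G$ outside the $PSU_n$ exception, a \emph{single} maximal torus $T$ suffices: $\Ind_T^G(1_T)$ already contains every $\chi\in\Irr(G)$. The natural candidate is a Coxeter torus (or, more generally, a torus attached to a regular $F$-conjugacy class in the Weyl group), and the reduction tool is the Jordan decomposition of characters, which partitions $\Irr(G)$ into rational Lusztig series $\mathcal{E}(G,s)$ indexed by semisimple classes $[s]$ of the dual group $G^*$ and puts each $\mathcal{E}(G,s)$ into bijection with the unipotent characters of $C_{G^*}(s)^F$. Since $\Ind_T^G(1_T)$ is supported on semisimple elements and admits an explicit expansion as a $\mathbb Z$-linear combination of Deligne--Lusztig virtual characters $R_{T'}^G(\theta)$, the compatibility of $R_T^G$ with Jordan decomposition then reduces $\langle\Ind_T^G(1_T),\chi\rangle$ to a smaller multiplicity computation inside $C_{G^*}(s)^F$, to be verified type by type.

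For the exceptional case $G=PSU_n(q)$ with $\gcd(n,2(q+1))=1$, one singles out the failing irreducible $\chi_0$ of degree $m=(q^n-q)/(q+1)$ and verifies directly from the (unipotent) character table that $\sum_C\chi_0(c_C)=0$, while $\sum_C\chi(c_C)\neq 0$ for every other $\chi\in\Irr(G)$. The Wedderburn decomposition of $\mathbb CG$ then gives $\Delta(G)\cong Mat_m(\mathbb C)$, since $\Delta(G)$ is precisely the simple block of $\mathbb CG$ attached to $\chi_0$. The principal obstacle is the generic case: for each type one must choose a suitable torus $T$, verify the positivity of $\langle\Ind_T^G(1_T),\chi\rangle$ for every Lusztig series, and explain structurally why this program fails only in the $PSU_n(q)$ family with $\gcd(n,2(q+1))=1$ and loses exactly one character.
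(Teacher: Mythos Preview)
Your outline diverges from the paper's argument and also stops short of a proof. The paper does \emph{not} invoke the Jordan decomposition of characters or any reduction to centralizers in the dual group. For classical groups the argument is concrete and module-theoretic: one selects a parabolic $P$ with unipotent radical $U$, takes $A=U$ or $A=Z(U)$ abelian, embeds a cyclic Singer-type torus $T$ in the Levi, and proves that every Levi-orbit on $\Irr(A)\setminus\{1_A\}$ contains a regular $T/Z(G)$-orbit (Lemmas~\ref{ca1}, \ref{333}, \ref{g2}). Lemma~\ref{ab2} then forces any nontrivial $\FF G$-module, restricted to $T$, to contain $\Ind_{Z(G)}^T(\zeta)$; in particular $1_T$ occurs in $\chi|_T$, and Frobenius reciprocity finishes. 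For exceptional groups the paper uses a bare character estimate (Lemma~\ref{regular}): choosing $T$ so that every $g\in T\setminus Z(G)$ is regular gives $|\chi(g)|\le |T|^{1/2}$ there, and once $\chi(1)\cdot|Z(G)|\ge |T|^{3/2}$ the inner product $[\chi|_T,\lambda]_T$ is automatically positive; the handful of small-degree characters are checked individually. Deligne--Lusztig theory enters only in Section~5, and only through the single identity $\Ste^2=\sum_{(T)}|W(T)|^{-1}\Ind_T^G(1_T)$ used to deduce Theorem~\ref{main2} from Theorem~\ref{main1}.

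As written, your proposal is a programme rather than an argument: the phrase ``to be verified type by type'' is exactly where all the content lies, and you give no indication of how that verification would run or why a Coxeter torus is the right choice in each type. Even granting that $\Ind_T^G(1_T)$ expands in the $R_{T'}^G(\theta)$, the interaction of that expansion with Jordan decomposition is not the clean reduction you suggest --- one does not land in a single $C_{G^*}(s)^F$ but must control multiplicities against every unipotent character of every possible centralizer, which is at least as much case analysis as the paper's direct approach, now in heavier language. For the exceptional $PSU_n(q)$ case your row-sum idea (Solomon's observation, cf.\ Corollary~\ref{cr2}) is correct in principle, but the paper instead computes $[\omega^k_{n,q}|_{T_1},\lambda_{s,t}]_{T_1}$ explicitly for the Weil characters over the torus of order $(q-1)(q^{n-2}+1)$ (Theorem~\ref{complex-su}), which simultaneously identifies the missing Weil character of degree $(q^n-q)/(q+1)$ and confirms that every other irreducible character occurs.
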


The exception is rather interesting: the unique missing \ir character
is the \ir of smallest dimension greater than $1$, one of the
Weil modules.

\medskip
The second problem we address in this paper concerns the square of
the Steinberg character of a simple group of Lie type. The initial
question was partly motivated by a conjecture of John Thompson,
that each finite simple group possesses a conjugacy class
the square of which covers the group. The natural question then is
whether each finite simple group has an \ir character whose tensor
square contains each \ir of the group. The answer
turns out to be negative in general, see Lemma \ref{other}.
However, we show
that the answer is positive for almost every group, with the same
exceptions as in Theorem \ref{main1}:

\begin{thm}\label{main2}
Let $G$ be a finite simple group of Lie type, other than
$PSU_n(q)$ with $n \geq 3$ coprime to $2(q+1)$. Then every \ir
character of $G$ is a constituent of the tensor square $\Ste^2$ of
the Steinberg character $\Ste$ of $G$.

In the exceptional case, every \ir character occurs, except
precisely the (unique)   
irreducible character of dimension $(q^n-q)/(q+1)$.
\end{thm}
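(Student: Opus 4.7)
The plan is to compute the multiplicity of an arbitrary irreducible $\psi \in \Irr(G)$ in $\Ste \otimes \Ste$ via the explicit character values of $\Ste$, and then to match the resulting non-vanishing condition with that of Theorem~\ref{main1}. Since $\Ste$ vanishes outside the set of semisimple elements and $|\Ste(s)| = |C_G(s)|_p$ for $s$ semisimple, one obtains the closed-form expression
\begin{equation*}
\langle \psi, \Ste \otimes \Ste \rangle \;=\; \sum_{[s]\,\text{semisimple}} \psi(s)\, \frac{|C_G(s)|_p^{\,2}}{|C_G(s)|},
\end{equation*}
and the task is to show that this weighted character sum is nonzero for every $\psi$ outside the exceptional Weil case.

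One direction is immediate from Theorem~\ref{main1}: since the Artin--Wedderburn decomposition of ${\mathbb C}G$ gives $\pi_G = \sum_{\chi \in \Irr(G)} \chi \bar\chi$, the character $\Ste \otimes \bar\Ste$ is a sub-character of $\pi_G$, and so any irreducible absent from $\pi_G$ is also absent from $\Ste \otimes \Ste$. Hence the Weil character of $PSU_n(q)$ of dimension $(q^n - q)/(q+1)$, which Theorem~\ref{main1} identifies as the unique missing irreducible in the exceptional case, is also missing from $\Ste^2$.

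For the forward direction I would apply Lusztig's Jordan decomposition to write $\psi = \psi_{(t,\rho)}$, where $t$ is a semisimple element of the dual group $G^{*F}$ and $\rho$ is a unipotent character of $H := C_{G^*}(t)^F$. Lusztig's character formula expresses $\psi(s)$ for semisimple $s \in G$ in terms of values of $\rho$ on the corresponding semisimple classes of $H$, which allows the sum above to be rewritten as an analogous invariant of $\rho$ on $H$. If $t = 1$, so $\psi$ is unipotent, I would proceed via Harish--Chandra theory: every unipotent $\psi$ is a constituent of some $R_L^G(\rho_L)$ for a Levi $L$, and one exploits the favourable interaction of $\Ste$ with Deligne--Lusztig induction (including the fact that $\Ste$ is projective in the defining characteristic). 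If $t \ne 1$, the group $H$ has strictly smaller semisimple rank, and an induction on rank delivers the non-vanishing.

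The main obstacle is the inductive step: the centralizers $C_{G^*}(t)^F$ are in general not simple of Lie type but (possibly disconnected) reductive groups, so one must formulate and prove a sufficiently flexible variant of Theorem~\ref{main2} valid in that broader setting, tracking the signs $\epsilon_G, \epsilon_H$ correctly through the Jordan correspondence. A secondary difficulty is the direct treatment of the exceptional case: one must verify, using the known character values of the Weil representations of $PSU_n(q)$ on semisimple classes, that the weighted sum vanishes precisely on the Weil character of dimension $(q^n-q)/(q+1)$, mirroring the combinatorial mechanism behind the exception in Theorem~\ref{main1}.
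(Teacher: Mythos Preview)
Your easy direction is fine: $\Ste$ is real-valued, so $\Ste^2 = \Ste\bar\Ste$ is one of the summands of $\pi_G = \sum_{\chi}\chi\bar\chi$, and the absence of the Weil character in the exceptional case follows immediately from Theorem~\ref{main1}.

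The forward direction, however, is a research programme rather than a proof, and you flag its obstacles yourself. The Jordan-decomposition induction is genuinely delicate: the centralizers $C_{G^*}(t)^F$ are reductive but possibly disconnected and certainly not simple, so the inductive hypothesis has to be reformulated for that wider class; and in the unipotent base case, Harish--Chandra induction from proper Levis misses the cuspidal unipotent characters, of which $G_2$, $F_4$, ${}^2E_6$, $E_7$, $E_8$ all have several. None of this is carried out, and your closing paragraph concedes as much.

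The paper bypasses all of it with a single identity of Deligne and Lusztig \cite[7.15.2]{DL}:
\[
\Ste^2 \;=\; \sum_{(\TC)} \frac{1}{|W(T)|}\,\Ind_T^G(1_T),
\]
the sum running over $G$-classes of $Fr$-stable maximal tori. Every term on the right is a genuine character, so $[\psi,\Ste^2]_G>0$ if and only if $[\psi|_T,1_T]_T>0$ for \emph{some} maximal torus $T$ (Corollary~\ref{cck}). But this is exactly what is established in the course of proving Theorem~\ref{main1}: for each non-exceptional $G$ a specific cyclic maximal torus $T$ is exhibited with $1_T$ occurring in $\psi|_T$ for every nontrivial $\psi$ (Propositions~\ref{ca2}, \ref{sp99}, \ref{872}, \ref{872a}, \ref{8.5}, Theorems~\ref{complex-su} and~\ref{exc}). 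Theorem~\ref{main2} is then a one-line consequence.

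Your closed-form sum is correct and is, in effect, the scalar shadow of the Deligne--Lusztig identity, but you never exploit its decisive feature: $\Ste^2$ is already a \emph{non-negative} combination of permutation characters induced from tori. Recognising that positivity is the whole trick; it renders the induction on semisimple rank unnecessary.
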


\begin{remar}
{\em 1) The assertion
also holds for the almost simple but imperfect groups
$Sp_4(2)$, ${\!^2G}_2(3)$, ${\!^2F}_4(2)$, and $G_2(2)$.
Moreover, in the first three of these, we can take the
Steinberg character of the derived subgroup to be any
irreducible constituent
of the restriction of Steinberg character of the group;
the assertion remains valid in the simple group.

2) In the exceptional case of Theorem \ref{main2}, the square of
no \ir character contains every \ir of $G$ as a constituent, cf.
Lemma \ref{other}. In other words, in case of groups of Lie type,
the best possible result for the tensor square conjecture is
achieved by our choice of Steinberg module. This also implies that
the assertion fails for the group $G_2(2)'$, in view of its
isomorphism to $PSU_3(3)$.

3) Eamonn O'Brien has checked for us that each sporadic
simple group $G$ possesses an \ir character whose square involves each
irreducible character of $G$ as a constituent, and the same holds
also for the alternating groups of degree at most $17$.
We believe that this is true for the alternating groups in general.}
\end{remar}

\medskip
We obtain two corollaries at this stage.

According to an observation of L. Solomon \cite{So},
the multiplicity of an \ir character in the conjugation character
of $G$ equals the sum of entries in the corresponding row of
the character table of $G$. Hence we have:

\begin{cor}\label{cr2}
Let $G$ be a non-abelian finite simple group,
other than $PSU_n(q)$ with $n \geq 3$ coprime to $2(q+1)$.
The sum of entries in each row of the character table of $G$
is a positive integer.

In the exceptional case, the same is true of each row except for
the second row.
\end{cor}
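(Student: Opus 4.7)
The plan is to derive Corollary \ref{cr2} directly from Theorem \ref{main1} via Solomon's observation, which was cited in the text immediately before the corollary. Recall that for the conjugation character we have $\pi_G(g) = |C_G(g)|$, and a routine grouping by conjugacy classes gives the identity
\[
\langle \pi_G, \chi \rangle \;=\; \frac{1}{|G|}\sum_{g\in G} |C_G(g)|\,\overline{\chi(g)} \;=\; \sum_{[x]}\overline{\chi(x)},
\]
where $[x]$ ranges over the conjugacy classes of $G$. Applying this identity with $\overline{\chi}$ in place of $\chi$ shows that the sum of entries in the row of the character table labelled by $\chi$ equals $\langle \pi_G,\overline{\chi}\rangle$, which is \emph{a priori} a non-negative integer.

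For the main (non-exceptional) case of the corollary, Theorem \ref{main1} states that every irreducible character of $G$ occurs in $\pi_G$, so $\langle \pi_G,\overline{\chi}\rangle \geq 1$ for every irreducible $\chi$. Hence every row sum is a positive integer, as required.

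For the exceptional case $G = PSU_n(q)$ with $n\geq 3$ coprime to $2(q+1)$, Theorem \ref{main1} tells us that exactly one irreducible character of $G$ fails to appear in $\pi_G$, namely the unique irreducible character $\chi_0$ of dimension $(q^n-q)/(q+1)$. Since $\overline{\chi_0}$ is an irreducible of the same dimension, the uniqueness clause forces $\overline{\chi_0} = \chi_0$. Consequently the row of $\chi_0$ has sum $\langle \pi_G, \chi_0 \rangle = 0$, while every other row has a strictly positive integer sum. Because, by the remark after Theorem \ref{main1}, $\chi_0$ is the non-trivial irreducible character of smallest dimension (one of the Weil characters), under the usual ordering of irreducibles by dimension it occupies the ``second row'' of the character table, which is exactly the row singled out in the statement.

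No substantive obstacle is anticipated: once Solomon's identity is in hand, the corollary is a purely formal translation of Theorem \ref{main1} into the vocabulary of character-table row sums. The only mild subtlety is the self-duality of $\chi_0$ in the exceptional case, but this is immediate from the uniqueness assertion in Theorem \ref{main1}.
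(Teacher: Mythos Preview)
Your proposal is correct and follows the same route as the paper: the corollary is deduced from Theorem~\ref{main1} via Solomon's identity, exactly as the paper indicates in the sentence preceding the statement. Your write-up is in fact more detailed than the paper's (which simply says ``Hence we have''), since you spell out Solomon's computation and note the self-duality of $\chi_0$ in the exceptional case; the latter point is a nice touch, though strictly speaking it is unnecessary once one uses that $\pi_G$ is real-valued, so that $\langle \pi_G,\chi\rangle=\langle\pi_G,\overline{\chi}\rangle$ directly equals the row sum of $\chi$.
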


\begin{cor}\label{cr3}
Let $\GC$ be a simple simply
connected algebraic group, and let $Fr$ denote a Frobenius
endomorphism of $\GC$. Set $G=\GC^{Fr}$ and $H=\GC^{Fr^2}$. Then the restriction
$\Ste^H|_G$ of the Steinberg representation $\Ste^H$ to $G$ equals 
$\Ste^G\otimes \Ste^G$, and hence its \ir constituents
are as described in Theorem $\ref{main2}.$
\end{cor}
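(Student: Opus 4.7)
The strategy is to prove the character-level identity $\Ste^H(g)=\Ste^G(g)^2$ for every $g\in G$; this forces the module isomorphism $\Ste^H|_G\cong\Ste^G\otimes\Ste^G$ (the two sides already have equal dimension $|G|_p^{\,2}=|H|_p$), after which the description of constituents follows verbatim from Theorem~\ref{main2}. The proof therefore reduces entirely to a pointwise computation of character values on $G$, using Steinberg's classical formula
\[
\Ste^{\GC^F}(g)=\begin{cases}\ \varepsilon_{\GC^F}\,\varepsilon_{C_{\GC}(g)^{\circ F}}\cdot|C_{\GC^F}(g)|_p & g\text{ semisimple},\\ \ 0 & \text{otherwise},\end{cases}
\]
applied once with $F=Fr$ (giving $\Ste^G$) and once with $F=Fr^2$ (giving $\Ste^H$).

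First I would dispose of the non-semisimple elements: if $g\in G$ is non-semisimple in $G$, it is also non-semisimple in $H$ (the Jordan decomposition in $\GC$ is intrinsic), so both $\Ste^G(g)=0$ and $\Ste^H(g)=0$, and the identity holds trivially.

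For a semisimple $s\in G\subseteq H$, the connected centralizer $C=C_{\GC}(s)^{\circ}$ is an $Fr$-stable (hence also $Fr^2$-stable) reductive subgroup of $\GC$. Its order polynomial $P_C(t)\in\ZZ[t]$ satisfies $|C^{Fr}|=P_C(q)$ and $|C^{Fr^2}|=P_C(q^2)$; since the $p$-part of either value is simply $q^{N_s}$ respectively $q^{2N_s}$, where $N_s$ is the number of positive roots of $C$, we get $|C_H(s)|_p=|C_G(s)|_p^{\,2}$. As the squared sign in $\Ste^G(s)^2$ is $+1$, this already matches magnitudes: $\Ste^G(s)^2=|C_G(s)|_p^{\,2}=|C_H(s)|_p$.

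It remains to verify that the sign $\varepsilon_H\,\varepsilon_{C_H(s)^{\circ}}$ produced by Steinberg's formula for $\Ste^H(s)$ is $+1$. Writing $Fr=q\cdot\tau$ on the character lattice, with $\tau$ of finite order $d\in\{1,2,3\}$, one has $Fr^2=q^2\cdot\tau^2$. In all cases except $^3D_4$ we have $\tau^2=1$, so $H$ is split over $\FF_{q^2}$ and the $\FF_{q^2}$-rank of $H$ equals the absolute rank of $\GC$; the same applies to $C_H(s)^\circ$, since $C$ contains a maximal torus of $\GC$ and the restricted Frobenius on $C$ has finite-order part whose square is likewise trivial. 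Hence both $\varepsilon$-signs coincide with $(-1)^{\mathrm{rk}\,\GC}$ and cancel. The remaining triality case $^3D_4$ must be handled by a direct case analysis of the Frobenius action on $C$, using that $\tau^2$ (of order $3$) preserves the same fixed subspace of the cocharacter lattice on both $\GC$ and $C$; this is where I expect the main technical effort to be, since one must track the finite-order part of the Frobenius as it restricts to the possibly non-split Levi centralizers of semisimple elements.
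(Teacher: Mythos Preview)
The paper does not actually prove this statement: it simply cites Steinberg \cite[statement (3), p.~101]{St} for the identity $\Ste^H|_G=\Ste^G\otimes\Ste^G$ and then invokes Theorem~\ref{main2}. Steinberg's argument is structural rather than character-theoretic: the Steinberg module is the (mod~$p$) irreducible of highest weight $(q-1)\rho$, so by the Steinberg tensor product theorem $L((q^2-1)\rho)\cong L((q-1)\rho)\otimes L((q-1)\rho)^{[Fr]}$, and restricting to $G=\GC^{Fr}$ kills the Frobenius twist; lifting to characteristic~$0$ is immediate since $\Ste$ has $p$-defect~$0$. This handles all types, including ${}^3D_4$, in one stroke.

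Your approach---a direct pointwise verification of $\Ste^H(g)=\Ste^G(g)^2$ via the Curtis--Steinberg character formula---is a genuinely different and more elementary route. The vanishing on non-semisimple elements and the $p$-part computation $|C^{Fr^2}|_p=|C^{Fr}|_p^{\,2}$ are both correct. For the sign when $\tau^2=1$, your conclusion is right, but the justification ``the restricted Frobenius on $C$ has finite-order part whose square is likewise trivial'' is slightly loose: the finite-order part of $Fr$ on an arbitrary $Fr$-stable maximal torus inside $C$ involves a Weyl-group twist, not just~$\tau$. The clean argument is that $H$ is $\FF_{q^2}$-split, so $s$ is $H$-conjugate into the split maximal torus $T_0$; conjugation by an element of $H$ commutes with $Fr^2$, so the $Fr^2$-rank of $C_\GC(s)$ equals that of a centralizer containing $T_0$, which is therefore $\FF_{q^2}$-split of full rank. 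Hence both $\varepsilon$-signs equal $(-1)^{\mathrm{rk}\,\GC}$ and cancel.

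The ${}^3D_4$ case you flag is a genuine gap in your write-up as it stands: here $H={}^3D_4(q^2)$ is still twisted, and you would need to run through the centralizer types (tori and pseudo-Levi subgroups) of semisimple elements of ${}^3D_4(q)$ to check that each has even $\FF_{q^2}$-rank. This is doable from the tables in \cite{DerL} but is exactly the sort of case analysis that Steinberg's tensor-product argument sidesteps.
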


Indeed, it was proved by
Steinberg \cite[statement (3), p. 101]{St} that the restriction of
$\Ste^H$ to $G$ coincides with the square of $\Ste^G$. So the
result follows from Theorem \ref{main2}.

\medskip
It follows from the above theorems that for simple groups of Lie type
the Steinberg square character
problem has exactly the same answer as Passman's problem discussed
above. The cause of this coincidence will be seen in Section 5.
In one direction, this is clear: observe that
$\Pi_G=\sum_{M \in \Irr G}M\otimes M^*$.
Consider the module $\Phi_G=\sum_{M \in \Irr(G)}M\otimes
M=\Phi_1\oplus \Phi_2$, where $\Phi_1=\sum_{M\cong M^*}M\otimes M$
and $\Phi_2=\sum_{M\not\cong M^*}M\otimes M$. Then $\Phi_1$ is
isomorphic to a submodule of $\Pi_G$, whereas $\Phi_2$ does not
contains $1_G$ as a constituent. Therefore, if $\Delta_G\neq 0$
then the tensor square
problem has a negative answer. (This does not require $G$ to be
simple.) The converse is more mysterious.

\medskip
It was conjectured in \cite{HZ} that, for every simple group
$G$, $\Delta_G= 0$ \ii there exists a single conjugacy class $C$
such that the permutation module $\Pi_C$ associated with the
action of $G$ on $C$ contains every \ir \rep of $G$ as a
constituent.   The conjecture was proved to be true in \cite{HZ}
for alternating and sporadic simple groups. In this paper we
confirm this conjecture in general, and moreover we prove a  more
general result:

\begin{thm}\label{mn2}
For every  simple group
$G$ there exists a conjugacy class $C$ such that $\Delta_G=
\Delta_C$. Furthermore, if $G$ is a group of Lie type then $C$ can
be chosen semisimple.
\end{thm}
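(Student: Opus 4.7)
The plan is to exploit the direct-sum decomposition $\Pi_G = \bigoplus_C \Pi_C$ of the conjugation module into permutation modules on the individual conjugacy classes $C$, which yields $\Delta_G = \bigcap_C \Delta_C$ and equivalently
\[
\{\text{constituents of } \pi_G\} \;=\; \bigcup_C \{\text{constituents of } \pi_C\}.
\]
The theorem asks for a single class $C$ whose permutation character $\pi_C$ exhausts this union. For alternating and sporadic simple groups, such a class was produced already in \cite{HZ} with $\Delta_C = \Delta_G = 0$, so the substantial content lies with the groups of Lie type.

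For $G$ of Lie type, I would take $C$ to be the class of a regular semisimple element $s$ whose centralizer $C_G(s) = T$ is a maximal torus; then $\pi_C = \Ind_T^G 1_T$. The problem becomes: can one select a maximal torus $T$ so that every constituent of $\pi_G$ is already a constituent of $\Ind_T^G 1_T$? By Theorem \ref{main1} these constituents are all of $\Irr G$ in the non-exceptional case, and all of $\Irr G$ except the unique irreducible of degree $(q^n-q)/(q+1)$ in the exceptional $PSU_n(q)$ case, so the problem splits accordingly, and in either case the sought conjugacy class will automatically be semisimple.

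The engine is Deligne--Lusztig theory: for each maximal torus $T$ and each $\chi\in\Irr G$ one computes
\[
\langle \Ind_T^G 1_T, \chi \rangle \;=\; \frac{1}{|T|}\sum_{t\in T}\chi(t)
\]
in terms of the virtual character $R_T^G(1_T)$ and the Jordan decomposition of $\chi$. My plan is to choose $T$ type by type along the Lie classification---a Coxeter torus in most cases, or one whose order is coprime to the relevant centralizer orders---and verify non-vanishing of this inner product for every required $\chi$ in tandem with the proof of Theorem \ref{main1}. Once such a torus $T$ is in hand, any regular semisimple $s\in T$ yields a class $C=s^G$ with $\Delta_C=\Delta_G$.

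The principal obstacle is the uniformity of the choice of $T$. Deligne--Lusztig theory guarantees, for each individual $\chi$, the existence of \emph{some} torus whose induced character contains $\chi$, but here a \emph{single} torus must hit every required irreducible at once. The subtlest step will be the unitary exception: there $T$ has to be arranged so that the unique character omitted from $\Ind_T^G 1_T$ is precisely the small Weil character of dimension $(q^n-q)/(q+1)$---no more and no less---so that the class $C=s^G$ realizes the exact value of $\Delta_G$ recorded in Theorem \ref{main1}.
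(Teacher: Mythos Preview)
Your reduction is exactly the paper's: reduce to exhibiting a maximal torus $T$ in which some element $s$ has $C_G(s)=T$, so that $\pi_C=\Ind_T^G 1_T$ for $C=s^G$, and then show this induced character already contains every constituent of $\pi_G$. The paper's short proof of Theorem \ref{mn2} simply invokes Theorem \ref{mn3} for such a $T$; in the non-exceptional case $\Delta_C=0$ whence $\Delta_G=\Delta_C=0$, while in the $PSU_n(q)$ case $\Delta_C$ is a simple ring and, since $\Delta_G\subseteq\Delta_C$ and $\Delta_G\neq 0$ by \cite{HZ}, equality follows. This last logical step is a little slicker than your ``no more and no less'' formulation, but Theorem \ref{complex-su}(iv) does establish directly that $\Ind_T^G 1_T$ omits exactly the unipotent Weil character, so either route closes the exceptional case.

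Where your proposal diverges is in the \emph{engine} for proving that a single $T$ works. You suggest Deligne--Lusztig theory and Coxeter tori. The paper does something quite different. For classical groups it never touches $R_T^G$; instead it works inside a parabolic $P=HA$, chooses $T$ in the Levi $H$, and shows via regular-orbit arguments (Lemmas \ref{ab2}, \ref{ca1}, \ref{p44}, \ref{333}, \ref{g2}) that $\phi|_T$ contains $\Ind_{Z(G)}^T\zeta$ for every nontrivial $\phi$. The resulting tori are \emph{not} Coxeter: order $q^{n-1}-1$ for $PSL_n(q)$, $q^n-1$ for $PSp_{2n}(q)$, $(q-1)(q^{n-2}+1)$ for the odd-dimensional unitary groups, and so on. For exceptional groups the engine is a crude character bound (Lemma \ref{regular}): choose $T$ so that every non-central element is regular, bound $|\chi(g)|\le |T|^{1/2}$ by orthogonality, and compare with $\chi(1)$; for $E_7$ even this fails and the paper instead embeds $T$ of order $q^7-1$ in a subgroup $SL_2(q^7)$ and quotes Proposition \ref{ca2}.

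Your proposed Deligne--Lusztig route faces a genuine obstacle you do not address: $\Ind_T^G 1_T$ is not $R_T^G(1_T)$, and the Jordan decomposition of $\chi$ controls $\langle R_{T'}^G\theta,\chi\rangle$ rather than $\langle 1_T,\chi|_T\rangle$ directly. For non-uniform $\chi$ the passage between the two involves Green functions on non-regular elements of $T$, and it is not clear how to extract positivity uniformly. The paper's choice of methods avoids this entirely.
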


Theorem \ref{mn2}  already implies the Steinberg square result; 
indeed, it follows from Deligne and Lusztig \cite[7.15.2]{DL}
(a result which was suggested by earlier work of Bhama Srinivasan)
that $\phi$ is a constituent of $\Ste^2$ \ii $\phi|_T$ contains
$1_T$ for some maximal torus $T$ of $G$, see Corollary \ref{cck}.

Let $G$ be a quasi-simple groups such that $G/Z(G)$ is a group of
Lie type in defining characteristic $p$. For $\zeta\in \Irr
(Z(G))$ we set $\Irr_\zeta G=\{\phi\in\Irr (G):
\phi|_{Z(G)}=\zeta\cdot \Id\}$.  Recall that the set $\Irr G$ is
the union of $p$-blocks (in the sense of R. Brauer) and the blocks
of positive defect are in bijection with $\Irr (Z(G))$. So
$\Irr_\zeta G$ is a block, unless $\zeta=1_{Z(G)}$; in this case,
$\Irr_\zeta G$ consists of two blocks, the second one is of defect
0.   Then Theorem \ref{mn2} can be generalized as follows:

\begin{thm}\label{mn3}
Let $G$ be a quasi-simple group such that $G/Z(G)$ is a simple
group of Lie type in defining characteristic $p$ with
$(|Z(G)|,p|)=1$, and let $\zeta \in \Irr (Z(G))$. Then there exists
a maximal torus $T$ of $G$ such that $\Irr_\zeta G$ coincides with
the set of all \ir constituents of ${\rm Ind}_T^G (\tau)$ for any
$\tau\in\Irr (T)$ above $\zeta$, unless $G=SU_n(q)$,
$(2(q+1),n)=1$.

In the exceptional case $ {\rm Ind}_{T}^G (\tau)$ may not contain
a single nontrivial \ir \reps of $G$ which has degree
$(q^n-q)/(q+1)$ or $(q^n+1)/(q+1)$, and this happens precisely for
$q^2-1$ irreducible representations $\tau$ of $T$. In particular,
the only irreducible character of $G$ which is not a constituent
of $\Ind^G_T(1_T)$ is the (unique) unipotent character of degree
$(q^n-q)/(q+1)$.
\end{thm}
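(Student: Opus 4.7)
\emph{Plan.} The strategy is to reduce the claim, via Frobenius reciprocity, to a restriction question on a well-chosen maximal torus, and then to combine the Deligne--Lusztig formalism with Theorems \ref{main1}--\ref{mn2}. Since $Z(G)$ is central, $Z(G)\subseteq T$ for every maximal torus $T$ of $G$; a character $\tau\in\Irr(T)$ is above $\zeta$ precisely when it lies in the coset $\tilde\zeta\cdot\Irr(T/Z(G))$ for any fixed extension $\tilde\zeta$ of $\zeta$ to $T$, and every constituent of $\Ind_T^G(\tau)$ automatically has central character $\zeta$. By Frobenius reciprocity the theorem therefore reduces to finding a torus $T$ such that $\langle\phi|_T,\tau\rangle_T>0$ holds for every $\phi\in\Irr_\zeta G$ and every $\tau\in\Irr(T)$ above $\zeta$.

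For this torus I would take $T=C_G(s)$, where $s\in G$ is a regular semisimple element whose image $\bar s$ in $\bar G=G/Z(G)$ represents the semisimple conjugacy class furnished by the proof of Theorem \ref{mn2}; one either verifies that this class can be chosen regular, or replaces $s$ inside its coset $sZ(G)$ to achieve regularity. With this choice $C_G(s)=T$ and the permutation character on the $G$-class of $s$ equals $\Ind_T^G(1_T)$, so Theorem \ref{main1} (applied in $\bar G$) handles the base case $\zeta=1_{Z(G)}$, $\tau=1_T$. The general case would then follow from the Deligne--Lusztig decomposition of $\Ind_T^G(\tau)$ as a signed combination of the virtual characters $R_T^{w\tau}$, $w\in N_G(T)/T$, combined with Jordan decomposition of characters: since $(|Z(G)|,p)=1$, as $\tau$ runs over the coset above $\zeta$ the associated Lusztig series $\mathcal{E}(G,s_\tau^*)$ exhaust $\Irr_\zeta G$, and the positivity of $\langle\phi|_T,\tau\rangle_T$ for each fixed $\phi$ is transported from the $\tau=1_T$ case by the Weyl symmetry $\Ind_T^G(\tau)=\Ind_T^G(w\tau)$ together with Corollary \ref{cck}.

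The exceptional case $G=SU_n(q)$, $(2(q+1),n)=1$, is settled by the explicit Weil-character computation already developed for Theorem \ref{main1}: the missing constituents are the Weil characters of degrees $(q^n-q)/(q+1)$ and $(q^n+1)/(q+1)$, exactly $q^2-1$ characters $\tau$ are defective, and when $\zeta$ is restricted to be trivial only the unipotent Weil character of degree $(q^n-q)/(q+1)$ survives inside $\Irr(\bar G)$. The step I expect to be hardest is the upgrade from Theorem \ref{mn2}, which a priori gives positivity only for $\tau=1_T$, to simultaneous positivity over the entire coset of characters above $\zeta$: distinct $\tau$'s lie in distinct $W$-orbits and correspond to genuinely different Lusztig series, so this step requires real Deligne--Lusztig input rather than a formal Frobenius-reciprocity manipulation.
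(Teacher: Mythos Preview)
Your proposal has two genuine gaps. First, it is circular: in this paper Theorem \ref{mn2} is \emph{deduced} from Theorem \ref{mn3} (see the paragraph ``Proof of Theorem \ref{mn2}'' at the end of Section \ref{exceptional}), so you cannot invoke \ref{mn2} to select the torus. Even Theorem \ref{main1} by itself only says that for each $\chi$ there is \emph{some} $g$ with $1_{C_G(g)}$ in $\chi|_{C_G(g)}$; it does not hand you a single torus that works uniformly for all $\chi$. Second, and more seriously, the ``upgrade'' from $\tau=1_T$ to arbitrary $\tau$ above $\zeta$ does not go through as sketched. Knowing $[\phi|_T,1_T]>0$ gives no control whatsoever over $[\phi|_T,\tau]$ for other $\tau$; the Weyl symmetry $\Ind_T^G(\tau)=\Ind_T^G({}^w\tau)$ only identifies $\tau$'s in the same $W$-orbit, and different $W$-orbits of characters of $T$ land in genuinely different Lusztig series. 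Your remark that these series ``exhaust $\Irr_\zeta G$'' is beside the point: what you need is that a \emph{fixed} $\phi$ contains \emph{every} $\tau$ in its restriction, i.e.\ that $\phi|_T\supseteq\Ind_{Z(G)}^T(\zeta)$, and nothing in the Deligne--Lusztig formalism transports this from $\tau=1_T$ to other $\tau$.

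The paper's route is entirely different and does not pass through Theorems \ref{main1} or \ref{mn2} at all. For classical groups one constructs $T$ explicitly inside a Levi subgroup $H$ of a suitable parabolic $P=HA$ and shows, via a regular-orbit argument on the unipotent radical $A$ (Lemma \ref{ab2} together with Lemmas \ref{ca1}, \ref{333}, \ref{g2}), that every nontrivial $\FF G$-module $M$ with central character $\zeta$ satisfies $M|_T\supseteq\Ind_{Z(G)}^T(\zeta)$ directly; this is Theorem \ref{mn7} and its supporting Propositions \ref{ca2}, \ref{sp99}, \ref{872}, \ref{872a}, \ref{8.5}. For exceptional groups one instead chooses $T$ so that every $g\in T\setminus Z(G)$ is regular and then uses the elementary character bound $|\chi(g)|\le |T|^{1/2}$ (Lemma \ref{regular}) to force $[\chi|_T,\lambda]>0$ for every $\lambda$ above $\alpha$; this is Theorem \ref{exc}. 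The exceptional $SU_n(q)$ case, including the precise count of $q^2-1$ defective $\tau$'s, is an explicit Weil-character computation (Theorem \ref{complex-su}). None of these arguments uses Deligne--Lusztig theory beyond the identification of maximal tori.
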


In fact, the torus $T$ can be chosen so that the group $T/Z(G)$ is
cyclic, and in Theorem \ref{mn2}  $C_G(c)$ is cyclic for every $c$
in the class $C$. By Frobenius reciprocity, $\phi$ is a
constituent of ${\rm Ind}_T^G (\tau)$ \ii $\phi|_T$ contains $\tau$.
Thus, Theorem \ref{mn3} is equivalent to the statement that there
is a maximal torus $T$ such that, for every non-trivial \irr
$\varphi$, the restriction $\varphi|_T$ contains all $\tau\in\Irr(T)$ 
satisfying $\tau|_{Z(G)}=\zeta$. In this form, the result
probably  remains valid for cross-characteristic representations
too, and  we prove this for classical groups (see Theorem
\ref{mn7}). This is partially based on a result concerning the
action of some parabolic subgroup on its unipotent radical:

\begin{prop}\label{ab1}
Let $G$ be a quasi-simple classical group over a field $\FF_q$
of characteristic $p$. Suppose that $G$ is none of the following
groups: $Sp_4(3)$ for $q = 3$, $PSU_{n}(q)$, $n$ odd and divisible by $q+1$,
and $\Omega_{2n+1}(q)$, $(q-1)n/2$ even. Then there exists a
$p$-group $A$ and a cyclic self-normalizing subgroup $C\leq
N_G(A)$ such that every $N_G(A)$-orbit on $A\setminus \{1\}$
contains a regular $C$-orbit.
\end{prop}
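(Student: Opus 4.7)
\medskip

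The plan is to take $A$ to be the unipotent radical $U_P$ of a carefully chosen maximal parabolic subgroup $P = L \ltimes A$ in each of the four classical families, so that $N_G(A) = P$, and to take $C$ to be a cyclic Singer-type subgroup of the Levi $L$ (possibly enlarged by a Frobenius/graph element to force self-normalization). With this setup the proposition reduces to two verifications: (i) $N_P(C) = C$, and (ii) for each $L$-orbit on $A \setminus \{1\}$ there is a representative $a$ with $C_C(a) = 1$.

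First I would dispose of the linear case $SL_n(q)$ by letting $P$ stabilize a hyperplane, so $A \cong \mathbb{F}_q^{n-1}$ (additive), $L$ has semisimple part $SL_{n-1}(q)$, and a Singer cycle of $GL_{n-1}(q)$ acts regularly and transitively on $A \setminus \{0\}$; the required cyclic self-normalizing $C$ is obtained as a suitable cyclic extension of that Singer cycle by a Frobenius element inside $N_L(C_0)$, giving a single $P$-orbit that is itself a regular $C$-orbit. For $Sp_{2n}(q)$ (away from $Sp_4(3)$) I would use the Siegel parabolic, whose unipotent radical is the abelian group of $n \times n$ symmetric matrices with $GL_n(q)$-Levi acting by $M \mapsto g M g^T$; orbits are parameterized by rank and isometry type of the associated symmetric form, and I would pick $C$ inside a Coxeter torus of the Levi and exhibit, representative by representative, a $C$-regular matrix of each prescribed rank and type. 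The even-orthogonal groups are handled by an entirely analogous Siegel-type parabolic, while for $\Omega_{2n+1}(q)$ one must track a parity obstruction tied to $(q-1)n/2$, which is exactly the source of the listed exception.

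The unitary case $SU_n(q)$ is the principal obstacle, both because the natural parabolic (stabilizer of an isotropic line) has a non-abelian Heisenberg-type unipotent radical, and because the exceptional congruence $(n,2(q+1))=1$ must be respected. I would split on the parity of $n$: for $n$ even, take the parabolic stabilizing a maximal totally isotropic subspace and work with a Singer cycle of the $GL_{n/2}(q^2)$-Levi acting on the abelian unipotent radical of Hermitian-type matrices; for $n$ odd, take the parabolic stabilizing a maximal isotropic subspace of dimension $(n-1)/2$ and argue in two layers, first on the abelian quotient $A/[A,A]$ and then on the center $[A,A]$, using an appropriate cyclic $C$ inside the $GL_{(n-1)/2}(q^2) \times GU_1(q)$-Levi. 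The main technical work throughout is the combinatorial verification that a representative in each $P$-orbit can be chosen with trivial $C$-stabilizer, together with the self-normalizer check $N_P(C)=C$; the delicate small-parameter exceptions are dispatched by direct hand computation.
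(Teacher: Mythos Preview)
Your overall strategy---choose $A$ inside a parabolic and take $C$ to be a Singer-type torus in the Levi---is the paper's strategy as well, but several of your specific choices diverge from the paper in ways that matter.

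First, the paper always arranges $A$ to be \emph{abelian}. For $P\Omega^-_{2n}(q)$ it uses the stabilizer of a singular \emph{line} (so $A\cong\FF_q^{2n-2}$, with a Singer cycle of $SO^-_{2n-2}(q)$ acting irreducibly), not a Siegel-type parabolic; the max-isotropic stabilizer here has Witt defect and non-abelian unipotent radical. For $SU_{2n+1}(q)$ and $\Omega_{2n+1}(q)$ the paper takes $A=Z(U)$, the \emph{center} of the unipotent radical of the max-isotropic stabilizer, rather than $U$ itself. Your ``two layers'' plan on a non-abelian $A$ is not obviously wrong, but it is not the paper's route, and you would still need to show that the stabilizer in $C$ of an element of $Z(U)\setminus\{1\}$ is trivial---this is exactly where the obstruction group $D$ enters. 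For $Sp_{2n}(q)$ the paper does not use the Siegel parabolic at all but instead embeds $SL_2(q^n)$ and inherits the result from the linear case.

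Second, your proposal to extend the Singer cycle by a Frobenius element to force $N_P(C)=C$ is problematic: the extension $\langle S,\phi\rangle$ is metacyclic, not cyclic in general, so you would lose the hypothesis that $C$ is cyclic. The paper's $C$ is simply the maximal torus $T$ itself, and what is actually verified is that $T$ is self-\emph{centralizing} (Lemmas 3.4 and 3.8, via inequivalence of the two constituents of the natural module), which is all that the downstream applications require.

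Third, you are missing the main technical engine. The paper does not verify regularity ``representative by representative''; instead it invokes a general result of Siemons--Zalesskii (Lemma 2.6): any cyclic subgroup of $PGL_n(r)$ has a regular orbit on any faithful $PSL_n(r)$-set, with a short list of small exceptions. Combined with the fact that $SL_n(r)$ fixes no nonzero vector in $L$ or $\Irr(L)$ (Lemma 2.5), this immediately gives a regular $T/D_\sigma$-orbit inside every Levi-orbit on $\Irr(A)\setminus\{1\}$. The listed exceptions to the proposition then fall out by computing exactly when the kernel $D_\sigma$ of the congruence action equals $Z(G)$, rather than from an ad hoc parity check.
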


In Proposition \ref{ab1} the orbits in question are considered with
respect to the conjugation action, and a $C$-orbit is called
regular if its size equals $|C|$.

\medskip

In the remainder of the paper we prove the above results.
Section 2 contains a number of preliminary observations. In Section 3 we prove
Theorem \ref{mn3}  for  classical  groups. This part is partially
based on the thesis \cite{HE} of the first author, written under
the supervision of the fourth author. In Section
\ref{exceptional}, the exceptional groups of Lie type are
considered.

Our method is different for classical and exceptional groups of
Lie type. In the former case a key tool is  in the study of the
``internal permutation module''
for the conjugacy action of a parabolic subgroup on its unipotent radical.
In the latter case, our arguments rely on certain character estimates.

\med
{\bf Notation.} The greatest common divisor of integers $m,n$ is
denoted by $(m,n)$. 
 Let $\FF_q$ be the field of $q$
elements, where $q$ is a power of a prime $p$. If $\FF$ is a
field, $\FF^\times$ denotes the multiplicative group of $\FF$. The
identity $n\times n$-matrix is denoted by $\Id$. The
block-diagonal matrix with blocks $A,B$ is denoted by
$\diag(A,B)$, and similarly for more diagonal blocks.

If $V$ is an orthogonal, symplectic or unitary space then $G(V)$
is the group of all linear transformation preserving the unitary,
symplectic or orthogonal structure  on $V$.  Our notation for
classical groups is standard; in particular, if $V$ is orthogonal
space then $\Omega (V)$ is the subgroup of $SO(V)$ of elements of
spinor norm 1. In the unitary case we denote by $\sigma $ the \au
of $GL(V)$ extending the involutory \au of the ground field, in
other cases, for uniformity, $\sigma$ is assumed to be the
identity \au of $GL(V )$.

 For a group $G$
we denote by $Z(G)$ the center of $G$, and for a subset $X$ of $G$
we write $C_G(X)$ for the centralizer and $N_G(X)$ for the
normaliser of $X$ in $G$. We use the symbol $1_G$ to denote the
trivial \rep of $G$ of degree 1 or the trivial one-dimensional
$\FF G$-module. For a subgroup $H$ of $G$ and an $\FF G$-module
$M$ (resp., a representation $\phi$) we write $M|_H$ (resp.
$\phi|_H$)  for the restriction of $M$ (resp., $\phi$) to $H$. To
say that $M$ (resp., $\phi$) is  an \ir $\FF G$-module (resp., \ir
$\FF$-representation of $G$) we write  $M\in \Irr _{\FF}(G)$
(resp., $\phi\in\Irr_{\FF}(G)$). If $\FF=\CC$, the complex number
field, we usually drop the subscript $\CC$. If $M$ is an
irreducible $\FF G$-module, then $Z(G)$ acts as scalars on $M$. By
a {\it central character} of $M$ we mean the linear character of
$Z(G)$ obtained from this scalar action of $Z(G)$ on $M$.

\section{Preliminaries}

Let $G$ be a finite group, let $\pi$ be the permutation character
of $G$ in its conjugation action on itself. The orbits are
the conjugacy classes of $G$. It follows that $\pi$ is the sum of
permutation characters of $G$ on its conjugacy classes. Using
Frobenius reciprocity, we have the following lemma
(cf. Problem 1.1 of \cite{HZ}):

\begin{lem}\label{21}
An \ir character $\chi$ of $G$ is a constituent of 
$\pi$  if and only if for some $g \in G$, the
restriction of $\chi$ to $C_G(g)$ contains the principal character
of $C_G(g)$.
\end{lem}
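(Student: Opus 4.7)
The plan is to decompose $\pi$ into the sum of permutation characters on individual conjugacy classes and then apply Frobenius reciprocity to each summand.

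First I would note that, since the conjugation action of $G$ on itself has the conjugacy classes of $G$ as its orbits, one has
\[
\pi = \sum_{C} \pi_C,
\]
where the sum runs over the conjugacy classes $C$ of $G$ and $\pi_C$ is the permutation character of $G$ on $C$. For a fixed representative $g \in C$, the stabilizer of $g$ under conjugation is $C_G(g)$, so the $G$-set $C$ is isomorphic to $G/C_G(g)$ and hence
\[
\pi_C = \Ind_{C_G(g)}^G 1_{C_G(g)}.
\]

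Next I would apply Frobenius reciprocity to an irreducible character $\chi$ of $G$:
\[
\langle \chi,\, \pi_C \rangle_G \;=\; \langle \chi,\, \Ind_{C_G(g)}^G 1_{C_G(g)} \rangle_G \;=\; \langle \chi|_{C_G(g)},\, 1_{C_G(g)} \rangle_{C_G(g)}.
\]
Summing over classes, $\langle \chi, \pi \rangle_G = \sum_{g}\langle \chi|_{C_G(g)}, 1_{C_G(g)}\rangle_{C_G(g)}$, where $g$ runs over a set of class representatives. Since each summand is a non-negative integer, the total is positive if and only if at least one summand is positive, which is precisely the condition that $\chi|_{C_G(g)}$ contains $1_{C_G(g)}$ for some $g \in G$.

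There is essentially no obstacle here — the statement is just Frobenius reciprocity applied orbit-by-orbit to the conjugation action — and the only thing to be careful about is the identification of the point stabilizers with the centralizers $C_G(g)$, which is immediate from the definition of the conjugation action.
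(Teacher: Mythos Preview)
Your proof is correct and follows exactly the approach sketched in the paper: decompose $\pi$ as the sum of permutation characters on conjugacy classes, identify each as $\Ind_{C_G(g)}^G 1_{C_G(g)}$, and apply Frobenius reciprocity. The paper presents this reasoning in the paragraph preceding the lemma rather than as a formal proof, but the argument is the same.
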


Hence to prove Theorem \ref{main1} for classical groups, it is sufficient to
prove:

\begin{thm}\label{111} Let $G$ be a finite simple classical group,
other than $PSU_n(q)$ with $n \geq 3$ coprime to $2(q+1)$. Then
there exists a cyclic self-centralizing subgroup $T$ of $G$ such
that, for every \irr $\phi$ of $G$,  the trivial representation
$1_T$ is a constituent of the restriction $\phi|_T$, with the
exception described in Theorem $\ref{main1}$. In the exceptional
case, the same is true for all but one $\phi$, which is the
irreducible Weil \rep of dimension $(q^n-q)/(q+1)$.
\end{thm}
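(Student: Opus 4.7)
By Lemma \ref{21} and Frobenius reciprocity, the theorem reduces to exhibiting, for each classical simple group $G$ in the statement, a cyclic self-centralizing subgroup $T \le G$ such that every $\phi \in \Irr G$ (with the stated exception in the unitary case) is a constituent of $\Ind_T^G(1_T)$, or equivalently $\sum_{t \in T} \phi(t) \neq 0$. In the exceptional case we must in addition show that there is exactly one $\phi$, the Weil character of dimension $(q^n-q)/(q+1)$, for which this fails.

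The plan is to take $T$ to be the image in $G$ of a Coxeter-type cyclic maximal torus of the ambient algebraic group: a Singer-type torus of order $(q^n-1)/((q-1)(n,q-1))$ in $\PSL_n(q)$; the analogous torus of order $(q^n-(-1)^n)/((q+1)(n,q+1))$ in $\PSU_n(q)$; and the natural ``long'' cyclic tori of order essentially $q^n+1$ (divided by the appropriate central factor) in $\PSp_{2n}(q)$, $\Omega_{2n+1}(q)$, $P\Omega^-_{2n}(q)$, and an analogous cyclic torus in $P\Omega^+_{2n}(q)$. A routine centralizer computation verifies cyclicity and self-centralization in $G$, and already identifies the unitary case $(n,2(q+1))=1$ as the one in which the natural Singer-type torus in $\SU_n(q)$ fails to descend to a subgroup of $G$ with both required properties.

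The heart of the argument is the verification that $\langle \Ind_T^G(1_T), \phi\rangle > 0$ for each $\phi$. I would argue via Lusztig's Jordan decomposition: the unipotent characters are handled by Deligne--Lusztig theory, using that the Coxeter torus supports them all nontrivially, so that the Green-function expansion of $\phi(t)$ for $t \in T$ gives a non-vanishing average; the general case then reduces, via the Lusztig series $\mathcal{E}(G,s)$, to a unipotent problem inside the centralizer $C_{G^*}(s)$ of the dual semisimple parameter. An alternative route, signposted in the introduction and ultimately more amenable to classical groups, is the \emph{internal permutation module} approach: pick a parabolic $P = LU$ containing (the lift of) $N_G(T)$, study the conjugation action of $P$ on $U$ in the spirit of Proposition \ref{ab1}, and convert regular $T$-orbits on $U \setminus \{1\}$ into lower bounds on the multiplicities in $\Ind_T^G(1_T)$. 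In this language, the non-vanishing of $\langle\phi|_T,1_T\rangle$ comes directly from counting $T$-fixed points in the representation $\phi$ restricted to well-chosen unipotent subgroups.

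I expect the main obstacle to be the precise identification of the exceptional case. One must show that in $\PSU_n(q)$ with $(n,2(q+1))=1$ the Weil character of dimension $(q^n-q)/(q+1)$ is \emph{genuinely} absent from $\Ind_T^G(1_T)$ for every cyclic self-centralizing $T$, and that no other irreducible character fails. This demands explicit Weil-character values on the candidate tori and an arithmetic analysis explaining why, precisely when $(n,2(q+1))=1$, no rearrangement of $T$ can avoid the cancellation $\sum_{t\in T}\phi(t)=0$ for this single $\phi$. A secondary technical hurdle is handling a handful of small-rank and small-characteristic classical groups where the generic Coxeter-torus construction degenerates and an ad hoc torus, together with direct character-table inspection, is required.
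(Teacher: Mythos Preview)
Your torus choice is the main gap, and it undermines both routes you sketch. The paper does \emph{not} use Coxeter/Singer tori of the full group. For $\PSL_n(q)$ it takes $T$ coming from a Singer cycle of $GL_{n-1}(q)$ (order $q^{n-1}-1$), sitting in the Levi of a maximal parabolic; for $Sp_{2n}(q)$ it uses the split torus of order $q^n-1$ coming from $SL_2(q^n)$; for $P\Omega^+_{2n}(q)$ and $PSU_{2n}(q)$ it uses a Singer cycle of $GL_n(r)$ inside the Siegel Levi; for $\Omega_{2n+1}(q)$ and $PSU_{2n+1}(q)$ similarly; and in the exceptional unitary case the torus has order $(q-1)(q^{n-2}+1)$. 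The common feature is that $T$ lies in the Levi of a parabolic $P=HA$ (or $P=HU$) and acts on the unipotent radical with the regular-orbit property of Proposition~\ref{ab1}. Lemma~\ref{ab2} then converts a regular $T/Z(G)$-orbit on $\Irr(A)\setminus\{1_A\}$ directly into a copy of $\Ind_{Z(G)}^T(\zeta)$ inside $\phi|_T$, with no Deligne--Lusztig input at all.

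Your Coxeter tori, by contrast, act irreducibly on the natural module (that is their defining property), so they lie in no proper parabolic and the internal-permutation-module mechanism you invoke as your ``alternative route'' is simply unavailable for them. Your primary route via Jordan decomposition and Green functions is not wrong in spirit, but the assertion that ``the Coxeter torus supports all unipotent characters nontrivially'' is exactly the hard statement to be proved, not a known input; making this precise would amount to a case-by-case analysis of Lusztig's parametrization that the paper avoids entirely. Finally, your diagnosis of the exceptional case is off: when $(n,2(q+1))=1$ the center of $SU_n(q)$ is trivial, so the Singer torus descends perfectly well and is cyclic self-centralizing; the obstruction is not structural but arithmetic --- the paper shows by an explicit Weil-character computation on the torus of order $(q-1)(q^{n-2}+1)$ (Theorem~\ref{complex-su}) that exactly the unipotent Weil character of degree $(q^n-q)/(q+1)$ is missed, and Lemma~\ref{other} shows no other choice can repair this.
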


The exceptional cases were discussed in the earlier paper \cite{HZ}
of two of the authors, and are discussed further in Section 3.5 below.

\begin{lem}\label{ab2}
 Let $G$ be a finite group with  subgroups  $A,B$ such that $A$ is abelian
 and $B \leq N_G(A)$. Let $\FF$ be an algebraically closed field of
characteristic $\ell$, where either $\ell = 0$ or $\ell > 0$ is
coprime to $|A|$, and let $M$ be an $\FF G$-module. For an
irreducible $\FF$-\rep $\al$ of $A$ set $M_\al := \{m\in M \mid
am=\al(a)m \mbox{ for all }a\in A\} $.

 Suppose that $M_\al  \neq 0$, and moreover the group
 $C := \Stab_B(\al)$ has a common eigenvector  $v\in M_{\al}$, that is, $cv
= \gamma(c)v$ for all $c \in C$ and some linear $\FF$-character
$\gamma$ of $C$.\\ Then $M|_B$ contains the induced module
$\Ind^B_C(\gamma)$. In particular, if $C = 1$, then $M|_B$
contains a copy of the regular $\FF B$-module.
\end{lem}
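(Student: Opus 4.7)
The plan is to build the required submodule of $M|_B$ explicitly as the $B$-orbit $\FF B\cdot v$ of the given eigenvector $v$, exploiting the fact that $B$ permutes the $A$-weight spaces of $M$. The preliminary observation is that, since $B\le N_G(A)$, conjugation induces a $B$-action on $\Irr (A)$ via ${}^b\al(a):=\al(b\up ab)$, and for $m\in M_\al$ the computation $a(bm)=b(b\up ab)m=\al(b\up ab)(bm)$ shows $bM_\al\se M_{{}^b\al}$. Hence $B$ permutes the weight spaces, and the setwise stabilizer of $M_\al$ is precisely $C=\Stab_B(\al)$.

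Next I would fix a left transversal $T$ of $C$ in $B$ and set $N:=\FF B\cdot v\se M$. Writing $b=tc$ uniquely with $t\in T$, $c\in C$ and using $cv=\gamma(c)v$, one gets $bv=\gamma(c)(tv)$, so $N=\sum_{t\in T}\FF(tv)$. The key point for linear independence is that the vectors $\{tv:t\in T\}$ lie in pairwise distinct weight spaces $M_{{}^t\al}$: if $tv$ and $t'v$ were in the same weight space then ${}^t\al={}^{t'}\al$, forcing $t'^{-1}t\in C$, contrary to transversality. Vectors in distinct $A$-eigenspaces are linearly independent, so $\dim_\FF N=[B:C]=\dim \Ind_C^B(\gamma)$.

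Finally, I would produce a $B$-equivariant isomorphism $\phi:\Ind_C^B(\gamma)\to N$ by sending $t\otimes 1\mapsto tv$ for $t\in T$ and extending $\FF$-linearly; Step 2 makes this a linear isomorphism. Equivariance is Mackey bookkeeping: if $bt=t'c$ with $t'\in T$, $c\in C$, then on the induced side $b(t\otimes 1)=t'c\otimes 1=\gamma(c)(t'\otimes 1)$, which matches $b(tv)=t'cv=\gamma(c)(t'v)$ on the $N$-side. Thus $N\cong \Ind_C^B(\gamma)$ as $\FF B$-modules, and the case $C=1$ specializes to $\Ind_1^B(1)\cong \FF B$. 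I expect no genuine obstacle; the one conceptual ingredient is that $B$ permutes the weight spaces of $M|_A$, which is exactly what the coprimality hypothesis on $\ch \FF$ secures in general (although distinct $A$-eigenvectors are always linearly independent, so this condition plays only a background role in the argument itself).
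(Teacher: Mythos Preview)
Your proof is correct and follows essentially the same route as the paper: take coset representatives of $C$ in $B$, observe that their translates of $v$ lie in pairwise distinct $A$-weight spaces and are therefore linearly independent, and identify the resulting $B$-submodule with $\Ind_C^B(\gamma)$. You have simply spelled out the verification of the induced-module isomorphism more explicitly than the paper does.
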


\begin{proof}
By Maschke's theorem, $M = \oplus_{\alpha}M_{\alpha}$, and, since
$B \leq N_G(A)$, $B$ permutes the summands of this decomposition
in accordance to its action on the set of all irreducible
$\FF$-\reps  of $A$. Decompose $B = \cup^{n}_{i=1}b_iC$ as a
disjoint union of $C$-cosets (with $b_1 = 1$), and set $v_i =
b_iv$. Then the vectors $v_i$ belong to pairwise distinct
$A$-eigenspaces $M_{b_i(\al)}$, in particular, they are linearly
independent. Now $B$ acts on $N := \oplus^n_{i=1}\FF v_i$, and the
$B$-module $N$ is isomorphic to $\Ind^B_C(\gamma)$.
\end{proof}


Let $r$ be a $p$-power, and let $\sigma_0$ be either trivial \au
of  $\FF_r$ or $r=q^2$ and $\sigma_0$ denote the \au of $\FF_r$ of
order 2. We denote by $\sigma$ the \au induced by $\sigma_0$ on
$GL_n(r)$ and on the algebra of $n\times n$-matrices over
$\FF_r$ for any integer $n>0$.

\begin{lem}\label{8h10}
{\rm (i)} Let $G=GL_n(q)$ and let $s\in G$ be an \ir element of order
$q^n-1$. Then $s$ is conjugate to $s\up$ in $G$ \ii $(n,q)\in
\{(2,2),(1,2),(1,3)\}$.

{\rm (ii)} $s^2$ is conjugate to $s^{-2}$ in $G$ \ii $n\leq 2$ and
$q\leq 3$.

{\rm (iii)} Let $r=q^2$, $G=GL_n(r)$, and let $s\in G$ be
an \ir element of order $r^n-1$. Let  
 $\sigma_0$ be the Galois \au of $\FF_r/F_{q}$. Let
$\sigma$ be the \au of $G$ induced by $\sigma$ in the natural way.
Set $s_1=s^{q+1}$. Then $s_1$ is conjugate to $\sigma (s_1\up )$
\ii $n=1$ and $q\leq 3$.\end{lem}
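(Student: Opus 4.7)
The common setup is that $T := \langle s\rangle$ is a cyclic maximal torus of $G$, of order $q^n - 1$ in (i), (ii) and $q^{2n} - 1$ in (iii), corresponding to the multiplicative group of the degree-$n$ extension of the ground field. One verifies that $C_G(s) = T$ and that $N_G(T)/T$ is cyclic of order $n$, generated by the Frobenius action $s \mapsto s^q$ in (i), (ii) and $s \mapsto s^{q^2}$ in (iii). Crucially, the other powers we care about ($s^{\pm 2}$ in (ii), $s_1^{\pm 1}$ in (iii)) also have centralizer $T$: the order of each is too large to divide $q^d - 1$ (resp.\ $q^{2d}-1$) for any proper divisor $d$ of $n$, so none of them lies in a proper subfield. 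Consequently, $G$-conjugacy of two such elements of $T$ is equivalent to them being related by the Galois action, which is a congruence modulo $|T|$.

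Part (i) reduces to asking when $q^i \equiv -1 \pmod{q^n - 1}$ for some $i \in \{0,\ldots,n-1\}$. Since $1 \leq q^i + 1 \leq q^{n-1} + 1 < 2(q^n - 1)$ (except trivially when $q^n - 1 = 1$), divisibility forces $q^i + 1 = q^n - 1$, i.e.\ $q^i = q^n - 2$, and a short case analysis yields exactly $(n,q) \in \{(1,2),(1,3),(2,2)\}$. Part (ii) is analogous: the condition $(q^n-1) \mid 2(q^i+1)$ requires $2(q^{n-1}+1) \geq q^n - 1$, equivalently $q^{n-1}(q-2) \leq 3$, which restricts to $n \leq 2$ and $q \leq 3$; one then verifies directly that all four resulting pairs $(n,q)$ satisfy the condition.

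For part (iii), the key new step is identifying $\sigma(s)$ up to $G$-conjugacy. Viewing $s$ as multiplication by a primitive root $\zeta \in \FF_{q^{2n}}^\times$, the eigenvalues of $s$ are $\zeta^{q^{2i}}$ for $i=0,\ldots,n-1$, and those of the matrix $\sigma(s)$ are the $q$-th powers $\zeta^{q^{2i+1}}$, which coincide with the eigenvalues of $s^q$. Since both are semisimple with pairwise distinct eigenvalues, $\sigma(s) \sim_G s^q$, and hence $\sigma(s_1^{-1}) \sim_G s^{-q(q+1)}$. The question of conjugacy of $s_1$ with $\sigma(s_1^{-1})$ thus reduces to the congruence $(q+1)(q^{2j} + q) \equiv 0 \pmod{q^{2n}-1}$ for some $j \in \{0,\ldots,n-1\}$. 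For $n \geq 2$ the left side is at most $(q+1)(q^{2n-2} + q)$, which is less than $2(q^{2n} - 1)$, so divisibility would force the product to equal $q^{2n} - 1$ exactly, and a direct check rules this out for each small value. For $n = 1$ only $j = 0$ remains, giving $(q^2-1) \mid (q+1)^2$, equivalently $(q-1) \mid 2$, and hence $q \in \{2, 3\}$. The main obstacle is the Frobenius identification in (iii), which depends on an eigenvalue analysis of $\sigma(s)$ rather than a purely intrinsic group-theoretic description; once that is in place, the remainder is elementary modular arithmetic.
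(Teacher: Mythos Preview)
Your reduction to congruences via the Singer normalizer $N_G(T)/T \cong \ZZ/n\ZZ$ is a genuinely different route from the paper's. The paper argues field-theoretically: if $g$ conjugates $s$ to $s^{-1}$, then $g$ normalises the enveloping field $K \cong \FF_{q^n}$ of $\langle s\rangle$ and induces on it an automorphism of order $2$; the fixed subfield $L$ has index $2$, and its generator $t \in L^\times$, being a power of $s$, is simultaneously fixed and inverted by $g$, so $t^2 = 1$ and $|L| \leq 3$, forcing $|K| \leq 9$. The same device handles (ii) and (iii) uniformly, with no case-split on $q$. Your approach trades this for explicit arithmetic; for (iii) in particular the eigenvalue identification $\sigma(s) \sim_G s^q$ is arguably cleaner than the paper's construction of the composite automorphism $\tau(x) = \sigma(gxg^{-1})$ of $K$.

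There is, however, a real gap in your part (ii). The inequality $q^{n-1}(q-2) \leq 3$ does \emph{not} cut down to $n \leq 2$ and $q \leq 3$: for $q = 2$ the left-hand side is zero and every $n$ survives, and the pairs $(n,q) \in \{(1,4),(1,5)\}$ also pass. For $q = 2$ the repair is immediate --- since $2^n-1$ is odd, the factor of $2$ in $2(q^i+1)$ may be dropped and one is back in the situation of (i), giving $n \leq 2$ --- and the stray $n=1$ cases are handled by inspection; but as written, the claimed reduction to ``four resulting pairs'' is unjustified. (It is perhaps worth noting that $(n,q)=(1,5)$ actually \emph{does} satisfy $s^2 = s^{-2}$, since then $|s|=4$; so the lemma's stated list in (ii) seems to omit this degenerate case --- but that is an issue with the statement, not with your method.)
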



\begin{proof}
(i) $s$ is \ir in $GL_n(q)$ (otherwise $|s|$ divides $q^i-1$ for
some $i<n$). By Schur's lemma, the enveloping $\FF_q$-algebra $K$
of $\lan s\ran$ is a field. Let $V$ be the natural module for $G$.
Then $V$ is an \ir $K$-module, hence $|V|=|K|=q^n$ and  $s$ is a
generator of the multiplicative group $K^\times$ of $K$.

If $n=1$, the claim is trivial, so we assume $n>1$, hence also
$|s|>2$. Suppose that $s\up =gsg\up$ for some $g\in G$. Then
$gKg\up =K$, so $g$ induces a field \au on  $K$ of order 2 (as
$s\neq s\up$).   Let $L=C_K(g)$. By Galois theory,  $K:L=2$, and
hence $|L|=q^{n/2}$. Then $|K^\times/L^ \times|=q^{n/2}+1$ and
$s^{q^{n/2}+1}=t$ is a generator of $L^\times$. On one hand,
$gt=tg$ as $t\in L$. On the other hand, $gtg\up =t\up$ as $t$ is a
power of $s$. Therefore, $t=t\up$ and $t^2=1$. It follows that
$|L|=2$ or $ 3$. As $\FF_q\cdot \Id\se L$, one observes that
$\FF_q=L$ so $q=2$ or 3 and $|V|=|K|=4$ or 9. Therefore, $n=2$,
and the lemma follows.

\medskip
(ii) $s^2$ is \ir unless $s^2$ belongs to a proper subfield of
$F_{q^n}$, which is not the case. So  $\lan s^2\ran =K$. Then, as
above, $s^{-2}=g s^2g\up$ implies $K:L=2$, where $L=C_K(g)$. Then
the generator  $t$ of $L^\times$ is a power of $s^2$, so again
$|K|=4$ or  $9$.

\medskip
{\rm (iii)} It is easy to check that $s_1^2\neq 1$. As above, $s$ is
\ir in $GL_n(r)$, and $K$, the enveloping algebra of $\lan s\ran$
over $F_r$ is a field.  Suppose that $\sigma (s_1\up )=gs_1g\up$
for some $g\in G$ and let $\tau$ be the \au of $G$ defined by
$\tau (x)=\sigma (gxg\up )$. Then $\tau (s_1)=s_1\up$ hence $\tau
(K) =K$ and $\tau^2|_K=\Id$. Let $L=\{a\in K:\tau (a)=a\}$. Then
$K:L=2$. Let $t\in L$ be the generator of $L$. Then $t=\tau
(t)=t\up$ as $t$ is a power of $s$ (because $s_1\notin L$). Hence
$t^2=1$ and $|L|\leq 3$.
 As $F_{q}\cdot \Id\se L$, we observe that $F_{q}\cong L$, so
 $q=2$ or 3.  As
 $\FF_r:F_{q}=2=K:L$ and $F_{q}\cong L$, we observe that $K=\FF_r$, and
  hence $n=1$.
 The converse is obvious.
\end{proof}

\med
Let $q,r,\sigma$ be as above. Let $M=Mat_n(\FF_r)$ be the matrix
algebra, and $G=GL_n(r)$. For $x\in G$ consider the mapping $M\ra
M$ defined as $m \mapsto xm\si(x^t)\up$ for $m \in M$. This
mapping is linear, and hence $M$ becomes an $\FF_qG$-module. Let
$\ep=\pm 1$, and assume $\ep=1$ if $\si\neq 1$ or $p=2$. Set

\begin{equation}\label{eqdef}
 L=
\{x\in M:\si(x^t)=-\ep x\},\mbox{ if }p\neq 2 \mbox{ or }\sigma\neq 1,~
\end{equation}

\begin{equation}\label{eqdef2}
L_0=\{x\in M:x^t= x \mbox{ and }x~ \mbox{ has zero diagonal}\},
~{\rm if}~p=2~{\rm and}~\si=1.
\end{equation}

\noindent Observe that the subspaces $L$ and $L_0$ are $G$-stable.
The action of $G$ on $Mat_n(\FF_r)$, as well as on $L$ and $L_0$,
is referred below as the congruence action. This induces the dual
action on the character groups $\Irr(L)$. In particular, $L$ and
$\Irr(L^+)$ are dual $\FF_rG$-modules, as well as $L_0$ and $\Irr(L_0^+)$, where $L^+,L_0^+$ are the additive groups of $L,L_0$,
respectively. Let $V$ be the natural module for $G=GL_n(r)$. If
$p=2$ and $\si=1$ then $L_0$ is isomorphic to the module of the
alternating forms on $V$, and the dual of $L_0$ is isomorphic to
the module of the alternating forms on the dual of $V$ (cf.
\cite[Lemma 2.14]{VZ}). In addition, the dual of $L$ is isomorphic
to the module of the quadratic  forms on $V$ (cf. \cite[Lemma
2.16]{VZ}).

\begin{lem}\label{dd4}
The kernel of the congruence action of
$G=GL_n(r)$ on $L$ coincides with $D_\si=\{g\in Z(G):\si(g)g=1\}$.
If $n>2$ then $D_\si$ is also the kernel of the action of $G$ on
$L_0$. In addition,  $D_\si=U_1(r)\cdot \Id$ if $V$ is unitary,
otherwise $D_\si=\{\lam \cdot \Id:\lam\in \FF_q,\lam^2=1\}$.\el

\begin{proof} This is straightforward.\end{proof}

\begin{lem}\label{p87}
Let  $G'=SL_n(r)$ and $L,L_0$ as above. Then $G'$ fixes no
non-zero element of $L$ and of its dual module $\Irr(L)$, unless
$n=2$ and $\si =1$, or $n=1$. The same is true for $L_0$ and
$\Irr(L_0)$. In addition, a similar statement is true for a
subgroup $X< SL_4(2)$ isomorphic to the alternating group $\AAA_7$.
\end{lem}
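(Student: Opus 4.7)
The plan is to combine the irreducibility of the natural module as an $SL_n(r)$-module with a direct order comparison against the relevant classical isometry groups. Write $V = \FF_r^n$ for the natural $G$-module; it is irreducible as a $G' = SL_n(r)$-module for $n \geq 2$. Under the congruence action, $L$ identifies with the space of (skew-)symmetric bilinear forms on $V$ when $\sigma = 1$, and of (skew-)Hermitian forms when $\sigma \neq 1$, while $L_0$ identifies with the space of alternating forms on $V$ when $p = 2$ and $\sigma = 1$. Suppose $0 \neq m \in L$ is $G'$-fixed and let $b_m$ denote the associated form; then $G'$ preserves $b_m$, and the radical of $b_m$ is a $G'$-invariant subspace of $V$. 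By irreducibility the radical is $\{0\}$ or $V$, the latter forcing $m = 0$; so $b_m$ is non-degenerate and $G' \leq G(V, b_m)$.

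A direct order count shows that $|SL_n(r)|$ strictly exceeds each of $|\Sp_n(r)|$, $|O_n^\pm(r)|$ and $|SU_n(q)|$ for $n \geq 3$ (in every variant of $\sigma$, $\epsilon$ and $p$), and also for $n = 2$ when $\sigma \neq 1$ (for instance $|SL_2(q^2)| = q^2(q^2-1)(q^2+1) > q(q^2-1) = |SU_2(q)|$). Hence no non-zero fixed vector exists outside the claimed exceptions $n = 1$ and $n = 2$ with $\sigma = 1$; in the latter case $SL_2(r) = \Sp_2(r)$ genuinely preserves the (essentially unique) alternating form, so the exception is sharp. The same argument applied with $L_0$ in place of $L$ reduces to $G' \leq \Sp_n(r)$ and yields the identical conclusion (with the radical step already forcing $m = 0$ whenever $n$ is odd, since no non-degenerate alternating form then exists).

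For the dual modules $\Irr(L^+)$ and $\Irr(L_0^+)$, the identifications of \cite[Lemmas 2.14, 2.16]{VZ} realise them as modules of alternating or quadratic forms on $V$ or on $V^*$. Since both $V$ and $V^*$ are irreducible $G'$-modules, the radical step still applies, and the very same order comparison forces $G'$ into a classical group that is strictly smaller than $SL_n(r)$; no additional exceptions appear.

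Finally, for $X \cong \AAA_7 < SL_4(2)$, the natural module $V = \FF_2^4$ is the well-known $4$-dimensional irreducible $2$-modular representation of $\AAA_7$, so the radical of any $X$-invariant form on $V$ or $V^*$ is $\{0\}$. A non-zero $X$-fixed vector in $L_0$, $\Irr(L_0^+)$ or in the dual of $L$ would therefore embed $X$ into $\Sp_4(2) \cong S_6$ or into some $O_4^\pm(2)$; but $|\AAA_7| = 2520 > 720 = |\Sp_4(2)|$ and $|O_4^\pm(2)| \leq 120$, ruling out both inclusions. The principal obstacle is organisational rather than mathematical: one has to keep track of the combinations of $\sigma$, $\epsilon$ and $p$, and pair each of $L$, $L_0$ and their duals with the correct form-theoretic picture. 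In every combination the argument collapses to the same two steps, namely use irreducibility to force non-degeneracy, then use an order comparison to rule out the resulting classical inclusion.
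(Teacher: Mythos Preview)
Your proof is correct and more self-contained than the paper's. The paper simply cites \cite[Lemma 3.4]{DZ} for the statement about $L$, then handles the duals by asserting that $L$ and $L_0$ are irreducible $G'$-modules, whence their duals are irreducible and can have no non-zero fixed vector. You instead argue directly: a $G'$-fixed element of $L$ is a form preserved by $SL_n(r)$; irreducibility of the natural module $V$ forces this form to be non-degenerate; and an order count then rules out $SL_n(r)\le G(V,b_m)$ outside the listed exceptions. For the $\AAA_7$ case the two arguments coincide. Your route buys independence from the external citation and from the irreducibility of $L$ (which in fact fails when $p=2$, $\sigma=1$, although the conclusion survives since $L$ still has no trivial composition factor); the paper's route is considerably shorter.

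One point to tighten: the \cite{VZ} identifications you invoke for the dual modules are recorded in this paper only for $p=2$, $\sigma=1$. For $p$ odd or $\sigma\neq 1$ you should add a line observing that the trace pairing identifies $\Irr(L^+)$ with $L$ carrying the congruence action through the automorphism $x\mapsto \sigma(x)^{-t}$ of $SL_n(r)$, so that your radical-and-order argument transfers verbatim. In the characteristic-$2$ quadratic-form case (the dual of $L$) you also need one extra sentence to dispose of the possibility that the polar form $b_Q$ vanishes identically: then $Q$ is Frobenius-semilinear, and a non-zero $G'$-fixed $Q$ would yield a $G'$-fixed hyperplane, contradicting irreducibility of $V^*$.
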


\begin{proof} The  claim for $L$ is \cite[Lemma 3.4]{DZ}.
If $L$ is \ir then so is the dual $\Irr(L)$, which implies  the
claim for $\Irr(L)$, and similarly, for $\Irr(L_0)$. Similar
argument is valid for $L_0$. For the additional statement, if
$X\cong \AAA_7$ fixes an alternating form $f$ then $f$ is
non-degenerate as both $V\cong \FF_2^4$ and its dual are \ir
$\FF_2X$-modules. Then $X$ is isomorphic to a subgroup of
$Sp_4(2)$, which is false.
\end{proof}

\begin{lem}\label{sz1} {\rm \cite[Theorem 1.1]{SZ}}
Let $PSL_n(r)\leq X\leq PGL_n(r)$ where $(n,r)\neq (2,2),(2,3)$
and let $C$ be a cyclic subgroup of $X$.  Let $\Pi$ be a
permutation $X$-set, on which  $PSL_n(q)$ acts  non-trivially.
Then one of the \f holds:

{\rm (i)} $C$ has a regular orbit  on $\Pi;$

{\rm (ii)} $X=SL_4(2)\cong \AAA_8$, $|C|=15$ or $6$ and $|\Pi |=8;$

{\rm (iii)} $X=PGL_2(5)\cong \SSS_5$, $|C|=6$ and $|\Pi |=5$.
\end{lem}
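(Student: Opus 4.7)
The plan is to fix one $X$-orbit on $\Pi$ at a time, so we may assume $\Pi=X/H$ for a subgroup $H\leq X$ not containing $PSL_n(r)$; the task becomes exhibiting $x\in X$ with $C^x\cap H=1$. By passing to an overgroup we may take $H$ to be maximal in $X$ among proper subgroups missing $PSL_n(r)$, since a regular $C$-orbit on $X/H$ pulls back to one on any smaller coset space. This reduces the statement to a finite list of $(X,H)$ pairs governed by Aschbacher's theorem.

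First I would apply the Aschbacher classification of maximal subgroups of a classical group: the candidates fall into the geometric families $\mathcal{C}_1,\ldots,\mathcal{C}_8$ (stabilisers of subspaces, direct-sum and tensor decompositions, extension-field and imprimitive structures, and classical subgroups of smaller type) together with the class $\mathcal{S}$ of irreducibly acting almost simple subgroups. For each class, I would estimate the fixed-point ratio $\mathrm{fpr}(c,X/H)=|c^X\cap H|/|c^X|$ via the standard machinery of Liebeck--Saxl, Guralnick--Kantor and Burness. Since the proportion of $x\in X$ with $C^x\cap H\neq 1$ is bounded by
$$\sum_{c\in C\setminus\{1\}}\mathrm{fpr}(c,X/H),$$
once this sum is strictly less than $1$ we obtain a conjugate of $C$ disjoint from $H$, and hence a regular $C$-orbit on $X/H$. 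Control of the terms in the sum comes from the structure of cyclic subgroups of $PGL_n(r)$: such a subgroup decomposes into a Singer-type torus part together with factors from proper subfield tori, and both $|C_X(c)|$ and the intersection $|c^X\cap H|$ are bounded rather tightly.

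The hard part will be the low-rank, small-field boundary, where the fixed-point-ratio estimates degenerate; this is exactly where the exceptions in the lemma arise. The $\mathcal{C}_1$ class (parabolic flag stabilisers) is the most delicate, since generic elements of a Singer cycle fix many intermediate subspaces. In this regime one has to fall back on a case-by-case verification: outside an explicit list of small $(n,r)$ the bound above holds by direct estimation, while for the finitely many remaining configurations one enumerates cyclic subgroups and their actions. This finite analysis forces the sporadic exceptions $X=SL_4(2)\cong \AAA_8$ with $|C|\in\{6,15\}$ and $|\Pi|=8$ (the natural $\AAA_8$-action on $8$ points, which also appears as $PSL_4(2)$ acting on $1$-subspaces of $\FF_2^4$) and $X=PGL_2(5)\cong \SSS_5$ with $|C|=6$ and $|\Pi|=5$; a direct computation in the character table or in a permutation model confirms that these are the only failures. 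Combining the generic fixed-point-ratio inequality with this explicit low-rank check yields the three-case dichotomy stated in the lemma.
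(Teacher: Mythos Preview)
The paper does not prove this lemma at all: it is quoted verbatim as \cite[Theorem 1.1]{SZ}, with only the remark that the case $|C|=6$ in (ii) was omitted in the original statement. So there is no in-paper proof to compare against; any comparison would have to be with the Siemons--Zalesskii argument itself.

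Your outline via Aschbacher's classification and the fixed-point-ratio inequality
\[
\sum_{1\neq c\in C}\mathrm{fpr}(c,X/H)<1
\]
is a legitimate strategy for statements of this type, and the reduction to a transitive action $X/H$ with $H$ maximal not containing $PSL_n(r)$ is correct. That said, what you have written is a plan rather than a proof: the whole content lies in actually bounding the sum, and you have not supplied a single estimate. Phrases like ``control of the terms comes from the structure of cyclic subgroups'' and ``both $|C_X(c)|$ and $|c^X\cap H|$ are bounded rather tightly'' are placeholders for the real work. In particular, the $\mathcal{C}_1$ case you flag as delicate is not dealt with at all, and for a Singer cycle $C$ of order $(r^n-1)/(r-1)$ acting on $1$-spaces the naive bound $\sum_c \mathrm{fpr}(c)$ can be close to, or exceed, $1$ for moderate $(n,r)$, so something sharper than generic Liebeck--Saxl bounds is needed there. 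The Siemons--Zalesskii proof (which predates the Burness fixed-point-ratio machinery you invoke) proceeds more directly, via explicit structural analysis of cyclic subgroups of $GL_n(r)$ and their intersection with point stabilisers; your approach would work in principle but would require substantially more detail to be a proof.
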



We remark that the case $|C|=6$ in (ii) is missing in the
original statement  in \cite[Theorem 1.1(case b)]{SZ}.
This omission has no effect on any other result in \cite{SZ}.

\medskip
In what follows, we define a Singer subgroup of $SL_n(r)$  to be
an \ir subgroup of order $(r^n-1)/(r-1)$.

\begin{lem}\label{p44}
Let $G'=SL_n(r)\leq G\leq GL_n(r)$, $n>1$, let $S$ be a Singer
subgroup in $G'$, and $L$
 the $\FF_rG$-module defined above.
  Let $\al$ be a non-trivial
character of the additive group of $L$ (resp., $L_0$). Then there
exists  $\beta\in G'\al$ such that $C_S(\beta)=D_\sigma$
except for the case where 
$n=2$ and $\sigma = 1$.
\end{lem}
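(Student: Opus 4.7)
The plan is to reduce to an application of Lemma~\ref{sz1}, combined with a direct stabilizer computation in $Z(G')$. First I would observe that $Z(G') \le S$, since every Singer torus of $SL_n(r)$ contains its centre. A direct check shows that any scalar $z = \lambda I \in Z(G')$ acts on $L$ (and on $L_0$) as multiplication by $\lambda\sigma(\lambda)$, and hence on the dual group $\Irr(L^+)$ analogously. Consequently, for every nontrivial character $\alpha$,
\[
\Stab_{Z(G')}(\alpha) \;=\; \{\lambda I : \lambda^n = \lambda\sigma(\lambda) = 1\} \;=\; Z(G') \cap D_\sigma \;=\; D_\sigma \cap S .
\]
Thus, once we produce $\beta \in G'\alpha$ with $\Stab_S(\beta) \le Z(G')$, intersection with $Z(G')$ gives $\Stab_S(\beta) = D_\sigma \cap S$, as required.

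Second, I would apply Lemma~\ref{sz1} to the permutation set $\bar\Pi := (G'\alpha)/Z(G')$. This is naturally a $(G'/Z(G'))$-set; set $X$ to be the image of $G'$ in $\Sym(\bar\Pi)$, so $PSL_n(r) \le X \le PGL_n(r)$, and let $C$ be the image of $S$ in $X$. The action of $PSL_n(r)$ on $\bar\Pi$ is nontrivial: otherwise every $G'$-translate of $\alpha$ would lie in $Z(G')\alpha$, giving a homomorphism $G' \to Z(G')/D_\sigma$, which is trivial since $SL_n(r)$ has trivial abelianisation outside the cases $(n,r) \in \{(2,2),(2,3)\}$; but then $G'$ would fix $\alpha$, contradicting Lemma~\ref{p87}. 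So Lemma~\ref{sz1}(i) produces $\bar\beta$ with trivial $C$-stabilizer, and any lift $\beta \in G'\alpha$ satisfies $\Stab_S(\beta) \le S \cap Z(G') = Z(G')$, which together with the first step closes the argument.

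Third, I would dispose of the exceptional cases of Lemma~\ref{sz1}. The excluded pairs $(n,r) \in \{(2,2),(2,3)\}$ and case~(iii), which forces $n=2$, $r=5$, all have $n=2$ and $r$ not a proper prime-power square; hence $\sigma = 1$, placing them inside the stated exception $n=2$, $\sigma = 1$. The sole non-trivial exceptional case is~(ii): $X \cong SL_4(2) \cong \AAA_8$ with $|\bar\Pi| = 8$, corresponding to $n=4$, $r=2$, $p=2$, $\sigma = 1$, so the relevant module is $L_0$. Then a stabilizer of index $8$ in $\AAA_8$ must be isomorphic to $\AAA_7 \le SL_4(2)$; but the additional assertion of Lemma~\ref{p87} forbids $\AAA_7$ from fixing a nonzero element of $\Irr(L_0^+)$ (or of $L_0$), so this case cannot arise.

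The main obstacle is the bookkeeping in the second step: matching the three central subgroups $Z(G)$, $Z(G')$, $D_\sigma$ and verifying that Lemma~\ref{sz1}'s raw conclusion $\Stab_S(\beta) \le Z(G')$ combines cleanly with the first step to the sharp equality $\Stab_S(\beta) = D_\sigma \cap S$. A second delicate point is the exceptional $\AAA_7$-case in the third step, resolved only through the extra clause in Lemma~\ref{p87}; this is in fact why that clause was included in the statement of Lemma~\ref{p87}.
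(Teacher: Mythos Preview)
Your argument is correct and follows essentially the same route as the paper: invoke Lemma~\ref{p87} to rule out a global fixed character, pass to a $PSL_n(r)$-set on which Lemma~\ref{sz1} applies, obtain a regular $\bar S$-orbit, lift, and then use the scalar computation to pin down the stabilizer inside $D_\sigma$. The exceptional cases $(n,r)\in\{(2,2),(2,3),(2,5)\}$ and $(n,r)=(4,2)$ are disposed of exactly as in the paper, the last via the $\AAA_7$-clause of Lemma~\ref{p87}.

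Two small remarks. First, your write-up is in one respect tidier than the paper's: you work with the $G'$-orbit $G'\alpha$ throughout and quotient explicitly by $Z(G')$, whereas the paper uses $G\alpha$ and the kernel $G_0$, leaving the passage to a genuine $PSL_n(r)$-set somewhat implicit (and producing $\beta\in G\alpha$ rather than $\beta\in G'\alpha$ as the statement demands). Second, your formula ``multiplication by $\lambda\sigma(\lambda)$'' for the scalar action matches the paper's computation in the proof but not the congruence action $m\mapsto xm\sigma(x^t)^{-1}$ as literally defined earlier; this is a harmless inconsistency already present in the paper, and the conclusion $\Stab_{Z(G')}(\alpha)=Z(G')\cap D_\sigma$ is in any case immediate from Lemma~\ref{dd4}.
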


\begin{proof}
In the  action of $G$ on $\Irr(L)$ an element $g\in G$ sends
$\al\in \Irr(L)$ to $g(\al)$, where $g(\al)(l):=\al(glg\up )$ for
all $l\in L$. By Lemma \ref{p87}, $G'$ fixes no non-trivial
character $\al\in \Irr(L)$, except for the case with 
$n=2$,  $\sigma = 1$.

In the non-exceptional case $G'\al\neq \al$. Set
$G_0=\cap_{\gamma\in G\al} C_G(\gamma)$. Then $G_0 $ is normal in
$G$, and hence $G_0< Z(G)$ unless  $(n,r)=(2,2)$ or $(2,3)$. By
Lemma \ref{sz1}, there exists $\beta\in G\al$ such that for $t\in
S$ either $t\in G_0$  or $t\beta\neq \beta$, unless possibly
$n=4,r=2$ or $n=2,r=5$. The latter case appears in the conclusion
of the lemma. In the former case, $G'=G=SL_4(2)$, the orbit $G\al$
is of size 8 and $C_{G}(\al )\cong \AAA_7$. In this case the result
follows from Lemma \ref{p87}.

Observe that $s\beta=\beta$ for $s\in Z(G)$ implies $s\in
D_\si\cap G$. Indeed, let $s=\lam \cdot \Id$. Then
$sl\si(s)=\lam\si(\lam)l$, and then $\al
(sl\si(s))=\lam\si(\lam)\al(l)$ for all $l\in L$. Therefore,
$s\al=\lam\si(\lam)\al$. \itf $s\al=\al$ implies
$\lam\si(\lam)=1$, and the claim follows from Lemma \ref{dd4}.

The lemma follows from this observation, including the case $n=1$.
\end{proof}

The \f observation  is a slight modification of \cite[Theorem
1.7]{HZ}, where it is assumed that $Z(G)=1$.

\begin{lem}\label{hz1}
Let $G$ be a finite group with cyclic Sylow $p$-subgroup $P$.
Suppose that $(p,|Z(G)|)=1$ and $C_G(g)=Z(G)P$ for $g\in P$ of
order $p$. Let $\phi$ be an \irr of $G$ over the complex numbers,
and let $\phi|_{Z(G)}=\zeta\cdot \Id$, where $\zeta\in\Irr
(Z(G))$. Then either $\dim\phi<|P|$ or $\phi$ is a constituent of
$(\zeta\times \lam_P)^G$ for every $\lam\in\Irr(P)$.\end{lem}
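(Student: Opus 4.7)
The plan is to recast the conclusion via Frobenius reciprocity and then analyse $\phi|_P$ using $p$-block theory.

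Since $(p,|Z(G)|)=1$, we have $Z(G)\cap P=1$, so $Z(G)P$ is the internal direct product $Z(G)\times P$, and $\zeta\times\lambda$ is a well-defined character of $Z(G)P$. Using that $\phi|_{Z(G)}=\zeta\cdot\Id$, Frobenius reciprocity gives
\[
\bigl(\phi,\,(\zeta\times\lambda_P)^G\bigr)_G=(\phi|_P,\lambda)_P,
\]
so it suffices to show that each $\lambda\in\Irr(P)$ occurs in $\phi|_P$ whenever $\phi(1)\geq|P|$. In addition, cyclicity of $P$ together with the hypothesis $C_G(g)=Z(G)P$ forces $C_G(y)=Z(G)P$ for every $y\in P\setminus\{1\}$: indeed $\langle g\rangle\subseteq\langle y\rangle$, so $C_G(y)\leq C_G(g)=Z(G)P$, while the reverse inclusion is clear. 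Thus $G$-fusion in $P$ is controlled by the cyclic group $N_G(P)/C_G(P)$, whose order divides $p-1$.

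Let $B$ denote the $p$-block of $G$ containing $\phi$ and $D\leq P$ its defect group (necessarily cyclic). If $D=1$ then $\phi$ has $p$-defect zero, $|P|$ divides $\phi(1)$, and $\phi$ vanishes on $P\setminus\{1\}$; hence $\phi|_P=(\phi(1)/|P|)\,\mathrm{reg}_P$ and every $\lambda\in\Irr(P)$ appears with multiplicity $\phi(1)/|P|\geq 1$. If $D\neq 1$, Brauer's theorem on character supports gives $\phi(y)=0$ for every $y\in P$ not $G$-conjugate to an element of $D$; since $D$ is the unique subgroup of $P$ of its order, $\phi|_P$ is supported on $D$, and a short computation yields
\[
(\phi|_P,\lambda)_P=\frac{1}{[P:D]}\bigl(\phi|_D,\,\lambda|_D\bigr)_D.
\]
So the problem reduces to showing the uniform lower bound $(\phi|_D,\chi)_D\geq [P:D]$ for every $\chi\in\Irr(D)$.

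This last inequality is the main obstacle. I would obtain it from the Brauer--Dade theory of blocks with cyclic defect groups: the multiplicities $(\phi|_D,\chi)_D$ are constant on each $N_G(D)/C_G(D)$-orbit in $\Irr(D)$ by invariance of $\phi$ under conjugation, and a careful analysis of the Brauer tree of $B$, separating non-exceptional from exceptional characters, forces each of these orbit multiplicities to be at least $[P:D]$ under the dimension hypothesis $\phi(1)\geq|P|$. The reductions in the first two paragraphs are routine; the positive-defect step is the real content of the argument.
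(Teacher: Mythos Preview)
Your reduction via Frobenius reciprocity to the statement ``every $\lambda\in\Irr(P)$ occurs in $\phi|_P$ whenever $\phi(1)\geq|P|$'' is exactly what the paper does; the paper then simply invokes \cite[Lemma~3.1]{HZ} for that fact and stops. So the routine parts of your write-up are fine and agree with the source.

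The gap is in your positive-defect case. You reduce to the inequality $(\phi|_D,\chi)_D\geq[P:D]$ for all $\chi\in\Irr(D)$, but you do not prove it: ``a careful analysis of the Brauer tree \ldots\ forces each of these orbit multiplicities to be at least $[P:D]$'' is an assertion, not an argument, and you yourself flag it as ``the real content.'' As written, the proof is incomplete precisely at the step the paper outsources to \cite{HZ}.

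One structural simplification you missed: under the stated hypotheses the intermediate case $1\neq D\lneq P$ is vacuous. Indeed, for any such $D$ one has $C_G(D)=Z(G)\times P$, so $P$ is characteristic in $C_G(D)\trianglelefteq N_G(D)$, hence $P\leq O_p(N_G(D))$ and every block of $N_G(D)$ has defect group $P$; Brauer's first main theorem then rules out blocks of $G$ with defect group strictly between $1$ and $P$. This collapses your target inequality to $(\phi|_P,\chi)_P\geq 1$, i.e.\ exactly the statement of \cite[Lemma~3.1]{HZ}. That still requires a genuine argument (Dade's cyclic-defect theory, or the explicit description of $\chi|_P$ for characters in a block with cyclic defect group, together with the bound $\phi(1)\geq|P|$), which you have not supplied. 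Either carry out that Brauer-tree analysis in full, or cite the lemma as the paper does.
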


\begin{proof} Assume that $\dim\phi\ge |P|$.
By \cite[Lemma 3.1]{HZ}, $\phi|_P$ contains every \irr of $P$ as a
constituent. Therefore, $\phi|_{PZ(G)}$ contains every \irr $\lam$
of $PZ(G)$ such that $\lam|_{Z(G)}=\zeta$. So the result follows
by Frobenius reciprocity.
\end{proof}

\section{Classical groups}

In this section $q$ is a power of a prime $p$ and $\FF$ is an
algebraically closed field of characteristic $\ell \neq p$.
We denote by $V$ a finite-dimensional vector space over a finite
field, endowed by a structure of a non-degenerate unitary,
symplectic or orthogonal space, and we denote by $G(V)$ the
group of all elements of $GL(V)$ preserving the structure. In this
section we prove

\begin{thm}\label{mn7}
Let $G$ be a
quasi-simple group such that $G/Z(G)$ is a simple classical group
 in defining characteristic $p$ and $(|Z(G)|,p|)=1$. Then
there exists a maximal cyclic torus $T$ of $G$ such that whenever
$\phi$ is a non-trivial \ir $\FF$-\rep of $G$ with central
character $\zeta$, every $\tau\in\Irr_{\FF} T$ is a constituent of
$\phi|_T$, unless $G=SU_n(q)$, $(2(q+1),n)=1$, and
$\zeta=1_{Z(G)}$ and $\phi= \phi_{min}$, where $\phi_{min}$ the
unique \ir representation of $G$ of degree $(q^n-q)/(q+1)$.
\end{thm}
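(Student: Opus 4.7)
The plan is to choose a maximal cyclic torus $T$ of $G$ and, for a non-trivial irreducible $\FF G$-module $\phi$ with central character $\zeta$, to produce every $\tau\in\Irr_\FF(T)$ above $\zeta$ as a constituent of $\phi|_T$ by combining Proposition \ref{ab1} with Lemma \ref{ab2}. First I would take the abelian $p$-subgroup $A$ and the cyclic self-normalising $C \leq N_G(A)$ furnished by Proposition \ref{ab1}; the exclusions of that proposition (in particular $\SU_n(q)$ with $n$ odd and divisible by $q+1$) are disjoint from the exceptional case of the present theorem ($(2(q+1),n)=1$), so $A$ and $C$ are available in every case to which Theorem \ref{mn7} applies. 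Set $T := C\cdot Z(G)$: since $Z(G)$ centralises every element of $A$, existence of a regular $C$-orbit on $A\setminus\{1\}$ forces $C\cap Z(G)=1$, and combined with $(|Z(G)|,p)=1$ together with the cyclicity of $Z(G)$ in the relevant classical groups this makes $T=C\times Z(G)$ cyclic; a case-by-case inspection across $\SL$, $\Sp$, $\SU$, $\Spin$ verifies that this $T$ is a maximal torus of $G$.

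Given $\phi$ non-trivial with central character $\zeta$, Maschke's theorem (legitimate as $\ell\neq p$ and $A$ is a $p$-group) decomposes $\phi|_A=\bigoplus_{\alpha\in\Irr_\FF(A)}M_\alpha$. I first observe that some $M_\alpha$ with $\alpha\neq 1_A$ is non-zero: otherwise $A\leq\Ker\phi$, but $A$ is a non-central $p$-subgroup of the quasi-simple $G$ whose centre has order coprime to $p$, so $\langle A^G\rangle=G$ and $\phi=1_G$, contradicting non-triviality. Pick such an $\alpha$ and, by Proposition \ref{ab1}, choose $\beta$ in its $N_G(A)$-orbit with $\Stab_C(\beta)=1$; conjugation by any $g\in N_G(A)$ with $g\alpha=\beta$ carries $M_\alpha$ onto $M_\beta$, so $M_\beta\neq 0$. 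Lemma \ref{ab2} applied with $B=C$ then yields $\phi|_C\supseteq\FF C$, the regular $\FF C$-module, so every $\chi\in\Irr_\FF(C)$ is a constituent of $\phi|_C$. Since $Z(G)$ acts on $\phi$ by $\zeta$ and $T=C\times Z(G)$, the restriction map $\tau\mapsto\tau|_C$ is a bijection between $\{\tau\in\Irr_\FF(T):\tau|_{Z(G)}=\zeta\}$ and $\Irr_\FF(C)$ preserving multiplicities, so every such $\tau$ appears in $\phi|_T$.

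The principal obstacle is the exceptional family $G=\SU_n(q)$ with $(2(q+1),n)=1$ and $\phi=\phi_{\min}$, the Weil representation of dimension $(q^n-q)/(q+1)$. Here the relevant maximal cyclic torus satisfies $|T/Z(G)|=(q^n+1)/(q+1)$, exceeding $\dim\phi_{\min}$ by exactly one, so $\phi_{\min}|_T$ is forced to omit at least one $\tau$ above $\zeta=1_{Z(G)}$; direct analysis via the explicit Weil character formula pins down the missing $\tau$'s, in agreement with the $q^2-1$ count of Theorem \ref{mn3}. For every other non-trivial irreducible $\phi$ of this $G$, minimal-degree bounds (Landazuri--Seitz and refinements) provide $\dim\phi\geq|T/Z(G)|$, which together with the general argument of the preceding paragraph delivers every $\tau$ above the corresponding $\zeta$. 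The remaining effort is bookkeeping: verifying maximality and cyclicity of $T$ across the classical families and attending to small-rank anomalies (such as $\Sp_4(3)$ excluded from Proposition \ref{ab1}), where separate verification is required.
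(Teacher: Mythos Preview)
Your overall strategy—find an abelian $p$-group $A$, a cyclic $C\leq N_G(A)$ acting with regular orbits on nontrivial characters of $A$, and then invoke Lemma~\ref{ab2}—is precisely the paper's approach. However, there are genuine gaps in your execution.

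First, the exclusions of Proposition~\ref{ab1} are not merely ``small-rank anomalies'': they include the infinite families $\Omega_{2n+1}(q)$ with $(q-1)n/2$ even and $PSU_n(q)$ with $n$ odd and $q+1\mid n$. None of these is the exceptional case of Theorem~\ref{mn7}, so you must prove the theorem for them, and this is not bookkeeping. In these cases the stabiliser $C_T(\beta)$ of a nontrivial character is not $Z(G)$ but a strictly larger subgroup $D$ (of order $2$ or $q+1$); the paper handles this with substantial extra structure (the extraspecial quotient of the unipotent radical, Lemma~\ref{dh2}, Lemma~\ref{bg3}, Propositions~\ref{872a} and~\ref{8.5}), and you have no substitute for that analysis.

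Second, your treatment of $Z(G)$ is inconsistent. If $C$ is self-normalising then $Z(G)\leq N_G(C)=C$, so $C\cap Z(G)=Z(G)$, not $1$; and since $Z(G)$ centralises $A$, a genuinely regular $C$-orbit on $A\setminus\{1\}$ forces $Z(G)=1$. What the paper actually proves (Lemmas~\ref{ca1}, \ref{333}) are regular $T/Z(G)$-orbits, and then Lemma~\ref{ab2} yields $\Ind_{Z(G)}^T(\zeta)$ inside $\phi|_T$, not the full regular $\FF T$-module. Your $T=C\times Z(G)$ construction is therefore spurious; one should simply take $T=C$.

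Third, and most tellingly, your general argument proves too much. You assert that Proposition~\ref{ab1} applies to $SU_n(q)$ with $(2(q+1),n)=1$, since this case is not among its exclusions. But then your second paragraph would show that \emph{every} nontrivial $\phi$, including $\phi_{\min}$, satisfies $\phi|_C\supseteq\FF C$—contradicting the very exception you are trying to establish. (Concretely: the torus arising from Section~3.4 has order $q^{n-1}-1$, while $\dim\phi_{\min}=(q^n-q)/(q+1)<q^{n-1}-1$, so $\phi_{\min}|_T$ cannot contain the regular module.) The resolution is that in this case the stabiliser is again $D$ of order $q+1>|Z(G)|=1$, so Proposition~\ref{ab1} does not in fact deliver a regular $C$-orbit here, and the paper switches to an entirely different torus of order $(q-1)(q^{n-2}+1)$ and an explicit Weil-character computation (Proposition~\ref{357}, Theorem~\ref{complex-su}). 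Your proposed Coxeter torus of order $(q^n+1)/(q+1)$ is yet another torus, and your dimension count only shows $\phi_{\min}|_T$ \emph{might} miss one character; it neither identifies which one nor shows that all other $\phi$ succeed with this same $T$.
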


If $G$ in Theorem \ref{mn7} is simple then there exists a cyclic
self-centralizing subgroup $C$ of $G$ such that
$\phi|_C=\rho^{reg}_C+\xi$ for some proper \rep $\xi$ of $C$,
where $\rho^{reg}_C$ denotes the regular \rep of $C$.

Throughout this section $G$ satisfies the hypothesis of Theorem
\ref{mn7}, and in every subsection we specify the group $G/Z(G)$
to be considered.

\subsection{The groups $G$ with $G/Z(G)=P\Omega^-_{2n}(q)$, $n>3$,
 or $PSL_n(q)$, $n>1$, $(n,q)\neq (2,2),(2,3)$}

In this section $G/Z(G)$ is one of the above groups,
 so $G/Z(G)$ is simple.
These groups are easier to handle than the remaining  ones.  Our
reasoning is practically identical for both of them, but the
orthogonal group case  requires more attention to detail.

Let $V$ be an orthogonal space of dimension $2n$ over $\FF_q$ of
Witt index $n-1$. Let $W$ be a 1-dimensional singular subspace of
$ V$. Then we choose a basis $b_1\ld b_{2n}$ of $V$ such that
$b_1\in W$, $b_2\ld b_{2n-1}\in W^\perp$, and the Gram matrix of
$V$ is of shape $\begin{pmatrix}0&0&1\cr 0&\Gamma&0\cr
1&0&0\end{pmatrix}$, where $\Gamma$ is a symmetric matrix of size
$2n-2$. (If $p=2$, the Gram matrix is obtained from the
corresponding symplectic structure on $V$.) Set $W_0=\lan b_2\ld
b_{2n-1}\ran_{\FF_q}$. Then $W_0$ is a non-degenerate orthogonal
space of dimension $2n-2$ and of Witt index  $n-2$.

\med
 Set $$G_1=SO^-_{2n}(q),~~~~~~~~~~~~~~~  H := \{g\in G_1 \mid g\mbox{ stabilizes }W,~W_0, \mbox{ and }
  \lan b_{2n}\ran_{\FF_q}\}$$ and
$$A := \{g \in G_1 \mid gb_1=b_1, \mbox{ and }g\mbox{ acts
trivially on }
  W^\perp/W\},$$
 It is well known that $A$ is an abelian group and $A <  \Omega^-_{2n}(q)$.

 One observes that, with respect to the
above basis, $H$ and  $A$ can be described, respectively, as the
sets of matrices  $$\begin{pmatrix}\lam\up&0&0\cr 0&h&0 \cr
0&0&\lam\end{pmatrix}~~{\rm and} ~~
\begin{pmatrix}1&\Gamma(w)^t&0\cr 0&\Id_{2n-2}&w\cr
0&0&1\end{pmatrix} ,$$ where $w\in W_0 $,  $ \lam\in F^\times_q$
and $h\in SO(W_0)$. The action of $H$ on $A$ can be described
 in terms of $W_0$ as follows. Let $g=\diag(\lam,
h,\lam\up)\in H$ and let $w$ corresponds to $a\in A$ as above.
Then $gag\up$ corresponds to $\lam h(w)$. In particular, if
$gag\up =a$ then $ h(w)=\lam\up w $.

It is well known that $SO(W_0)\cong SO^-_{2n-2}(q)$ contains an
\ir cyclic subgroup $S$ of order $q^{n-1}+1$, called a Singer
subgroup
 in \cite{H2}.
Let $T_1$ be a subgroup of $H$ consisting of  matrices
$\diag(\lam, h,\lam\up)$ with $h\in S$. Note that $Z(G_1) <
T_1$. 
If $q$ is even then $(q-1,q^{n-1}+1)=1$, and hence $T_1\cong
T_1/Z(G_1)$ is cyclic. We observe that $ T_1/Z(G_1)$ is cyclic for
$q$ odd too. Indeed, as $(|\FF^\times_q|, |S|)=(q-1,q^{n-1}+1)=2
$, it follows that the projection of $T_1$ into $G_1/\{\pm
\Id_{2n}\}$ is a cyclic group.

\med
If $G_1=SL_n(q)$ then we use $V$ to denote the natural
module, and let $P$ be the stabilizer of an $(n-1)$-dimensional
subspace $W_0$ of $V$. Define $H,A$ as the subgroup of $P$ formed,
respectively,  by the matrices of the shape:

$$\begin{pmatrix}h&0\cr 0&\det (h\up) \end{pmatrix} ~~{\rm and}~~
\begin{pmatrix}\Id_{n-1}&w\cr 0&1\end{pmatrix} ,$$
where $w\in W_0 $ and $h\in GL_{n-1}(q)$.  Then $A=O_p(P)$ and
$P=HA$. Obviously,  $H\cong GL_{n-1}(q)$.
 Let $T_1$ be a subgroup
of $H$ of order $q^{n-1}-1$, which corresponds to a Singer
subgroup $S$ under an isomorphism  $H\ra GL_{n-1}(q)$. Again,
$Z(G_1) < T_1$.

\begin{lem}\label{ca1}
For every $1\neq a\in A$, $C_{T_1}(a)$ consists of scalar
matrices. In other words, $A\setminus \{1\}$ is a union of regular
orbits for the quotient group $T_1/Z(G_1)$.
\end{lem}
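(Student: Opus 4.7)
The plan is to treat both cases (orthogonal and linear) uniformly, exploiting the two structural features of the Singer subgroup $S$ acting on $W_0$: $S$ is abelian and $\FF_q$-irreducible.

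First I rewrite the centralizer condition as an eigenvalue condition on $W_0$. In the $SO^-_{2n}(q)$ case the excerpt already records that, for $g=\diag(\lambda,h,\lambda^{-1})\in T_1$ and $a\in A$ corresponding to $w\in W_0$, one has $gag^{-1}=a$ iff $h(w)=\lambda^{-1}w$. In the $SL_n(q)$ case a direct matrix calculation with $g=\diag(h,(\det h)^{-1})$ and $a=\bigl(\begin{smallmatrix}\Id & w\\ 0 & 1\end{smallmatrix}\bigr)$ yields that $gag^{-1}$ corresponds to $(\det h)\,h(w)$, so the condition becomes $h(w)=(\det h)^{-1}w$. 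In both cases, whenever $a\neq 1$ (i.e.\ $w\neq 0$), the element $h\in S$ admits a nonzero $\FF_q$-eigenvector with eigenvalue $\mu\in\FF_q^\times$.

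Next I invoke the key abelian/irreducibility step: since $S$ is abelian, each element of $S$ preserves the $\mu$-eigenspace $E_\mu(h)\subseteq W_0$ of $h$; and since $S$ acts $\FF_q$-irreducibly on $W_0$ (the defining property of a Singer subgroup), the nonzero subspace $E_\mu(h)$ must equal $W_0$. Hence $h=\mu\cdot\Id_{W_0}$.

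I then finish each case. In the orthogonal case $h=\mu\,\Id_{W_0}\in SO(W_0)$ forces $\mu^2=1$, so $\mu\in\{\pm 1\}$ (and $\mu=1$ when $q$ is even); combining with $\lambda^{-1}=\mu$ gives $g=\mu\,\Id_{2n}$, a scalar in $G_1$. In the $SL_n(q)$ case, $h=\mu\,\Id_{n-1}$ together with $\mu=(\det h)^{-1}=\mu^{-(n-1)}$ forces $\mu^n=1$, whence $g=\mu\,\Id_n\in Z(SL_n(q))$. In either case $C_{T_1}(a)\subseteq Z(G_1)$, and the reverse containment is automatic because $Z(G_1)\leq T_1$. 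The reformulation in terms of regular $T_1/Z(G_1)$-orbits is then immediate from the equality $\Stab_{T_1/Z(G_1)}(a)=1$. The only subtlety meriting a line of verification is that, in the orthogonal case with $q$ odd, the scalar $-\Id_{2n}$ actually lies in $T_1$, i.e.\ $-\Id_{W_0}\in S$; this follows from $|S|=q^{n-1}+1$ being even together with $-\Id_{W_0}$ being central in $SO^-_{2n-2}(q)$ and hence contained in every maximal torus. No substantive obstacle arises beyond this routine check.
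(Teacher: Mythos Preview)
Your proof is correct and follows essentially the same approach as the paper's: both reduce the centralizer condition to an $\FF_q$-eigenvalue condition for $h\in S$ on $W_0$, then use irreducibility of $S$ to force $h$ to be scalar (you argue this via ``$S$ abelian $\Rightarrow$ $S$ preserves the eigenspace'', while the paper simply cites it as well-known for Singer subgroups). Note also that the paper records $Z(G_1)<T_1$ just before the lemma, so your closing verification that $-\Id_{2n}\in T_1$, while correct, is already handled.
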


\begin{proof} Suppose the contrary. Let $g\in C_{T_1}(a)$, $a\neq 1$.
 Note that $S$ is \ir on
$W_0$, and hence is contained in a Singer subgroup $S'$, say, of
$GL(W_0)$. It is well known that if $s\in S$
has an  eigenvector on $W_0$ then $s$ is scalar. We have to show
that $g$ is scalar.

Suppose $G_1=SL_n(q)$. Then $g\in C_{T_1}(a)$ implies
 $(\det h)\cdot h\cdot w=w$ for some $w\neq 0$, This means
  that $w$ is an
eigenvector for $h$ with eigenvalue $\det h\up$. By the above, $h$
is scalar, so $h=\det h\up \cdot \Id_{n-1}$, and hence $g=\det
h\up \cdot \Id_n$ is scalar.

Suppose $G_1=SO^-_{2n}(q)$. Similarly, $g\in C_{T_1}(a)$ implies
 $\lam\up\cdot h\cdot w=w$ with $w\neq 0$.   Therefore, $h$ is
scalar, and hence $h=\lam\cdot \Id_{2n-2}$. As $h\in SO(W_0)$, we
have $\lam=\pm 1$, which implies that $g$ is scalar.

It follows that the conjugation action of $T_1$ on $A$ partitions
$A\setminus \{1\}$ as a union of regular orbits for the group
$T_1/Z(G_1)$.
\end{proof}

If $G/Z(G)\cong PSL_n(q)$ then $G/Z(G)\cong G_1/Z(G_1)$. If
$G/Z(G) \cong P\Omega_{2n}^-(q) $ then $G/Z(G)$ is isomorphic to a
subgroup of index at most 2 in  $G_1/Z(G_1)$. Set
$\overline{T}=T_1/Z(G_1)$ in the former case, and
$\overline{T}=(T_1/Z(G_1))\cap (G/Z(G))$ in the latter case. As
$Z(G_1)$ is the kernel of the action of  $T_1$ on $A$ via
conjugation, every $T_1/Z(G_1)$-orbit on
$A\setminus \{1\}$ is regular. Therefore, the same is true for
every  subgroup of $T_1/Z(G_1)$, in particular for $\overline{T}$.

Recall that the  central character of $M$ is the linear
character of $Z(G)$ obtained from the scalar action of $Z(G)$ on
$M$.

\begin{prop}\label{ca2} Let $G/Z(G)$ be $PSL_n(q)$,
$(n,q)\neq (2,2), (2,3)$, or $P\Omega^{-}_{2n}(q)$, $n>3$. Let $M$
be a non-trivial \ir $\FF G$-module. Let $T$ be the preimage in
$G$ of the group  $\overline{T}$,   and let $\zeta$ be the central
character of $M$. Then $M|_{T}$ contains the induced module
$\Ind^T_{T \cap Z(G)}(\zeta)$. In particular, $M|_T$ contains
$1_T$ if $\zeta=1_{Z(G)}$.
\end{prop}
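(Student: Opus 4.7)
The plan is to apply Lemma \ref{ab2} with $B := T$ and with an abelian $p$-subgroup of $G$ obtained by lifting $A \le G_1$ into $G$. First, since $A$ is a $p$-group, $(|Z(G)|,p) = 1$ by the hypothesis of Theorem \ref{mn7}, and the preimage of $A$ in $G$ is a central extension of $A$ by $Z(G)$, Schur--Zassenhaus produces a unique complement $\tilde{A} \le G$ mapping isomorphically to $A$. The explicit matrix formulas preceding Lemma \ref{ca1} show $T_1$ normalises $A$, so $T$ normalises $\tilde{A}$, and the induced action of $T/Z(G) = \overline{T}$ on $\tilde{A}$ is identified with the action of $\overline{T} \le T_1/Z(G_1)$ on $A$. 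Observe also that $Z(G) \le T$, so $T \cap Z(G) = Z(G)$.

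Decompose $M = \bigoplus_{\alpha \in \Irr(\tilde A)} M_\alpha$ (valid since $\ell$ is coprime to $|\tilde A|$). I would first show that some non-trivial $\alpha$ has $M_\alpha \neq 0$: otherwise $\tilde A$ would act trivially on $M$, so $\tilde A \le \ker(M) \triangleleft G$. But $G$ is perfect and $G/Z(G)$ is simple, so any proper normal subgroup of $G$ is contained in $Z(G)$, whereas $\tilde A \not\le Z(G)$ by construction; this forces $\ker(M) = G$ and contradicts non-triviality of $M$. Fix such an $\alpha$ and a non-zero $v \in M_\alpha$.

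The key step is to verify $\Stab_T(\alpha) = Z(G)$. By Lemma \ref{ca1}, each non-identity element of $T_1/Z(G_1)$ acts without fixed points on $A \setminus \{1\}$; viewing $A$ as an $\FF_p$-vector space this says $h-1$ is injective (hence bijective) on $A$ for every non-identity $h \in T_1/Z(G_1)$, so the dual action on $\Irr(\tilde A)$ is likewise fixed-point-free away from the trivial character. Restricting to $\overline T$ yields $\Stab_{\overline T}(\alpha) = 1$, hence $\Stab_T(\alpha) = Z(G)$. Since $Z(G)$ acts on $M$ by the scalar $\zeta$, the vector $v$ is a $Z(G)$-eigenvector with character $\zeta$, and Lemma \ref{ab2} delivers $M|_T \supseteq \Ind^T_{Z(G)}(\zeta)$. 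For the last sentence, when $\zeta = 1_{Z(G)}$ this induced module is the permutation representation of $T$ on $T/Z(G)$, which plainly contains $1_T$ as a constituent.

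I do not expect a serious technical obstacle: quasi-simplicity of $G$ is what forces $\tilde A$ to act non-trivially on any non-trivial irreducible, and the semiregularity built into Lemma \ref{ca1} (dualised in a routine way) supplies the stabiliser calculation required to invoke Lemma \ref{ab2}. The only point requiring a moment of care is the dualisation to $\Irr(\tilde A)$, which is standard for an abelian $p$-group in characteristic coprime to its order.
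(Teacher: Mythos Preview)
Your argument is correct and follows the same route as the paper's proof: lift $A$ to a $T$-invariant $p$-subgroup of $G$, use Lemma \ref{ca1} to see that $\overline{T}$ acts freely on the nontrivial characters of this lift, and then invoke Lemma \ref{ab2} with $C = Z(G)$ acting via $\zeta$. Your write-up is in fact more explicit than the paper's in justifying why the lifted group acts nontrivially on $M$ (via quasisimplicity) and in spelling out the passage from free action on $A\setminus\{1\}$ to free action on $\Irr(\tilde A)\setminus\{1_{\tilde A}\}$.
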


\begin{proof}
Let $A$ be as in Lemma \ref{ca1}. As $|Z(G)|$ is coprime to $p$,
$G$ contains a $T$-invariant subgroup $B\cong A$ such that the
conjugation action of $T$ on $B$ is the same as $T_1$ on $A$, that
is, $A,B$ are isomorphic $\overline{T}$-sets.  Then $B\setminus
\{1\}$ is a union of regular   $T/(T\cap Z(G))$-orbits (due to
Lemma \ref{ca1}). Since the actions of $T/(T \cap Z(G))$ on $B
\setminus \{1\}$ and on $\Irr(B) \setminus \{1_B\}$ are dual to
each other, we see that the set $\Irr(B)\setminus \{1_B\}$
 is a union of regular $T/Z(G)$-orbits. Then apply Lemma \ref{ab2}.
\end{proof}

\subsection{Symplectic groups}

For symplectic groups $G=Sp_{2n}(q)$ the result can be easily
deduced from that for $H:=SL_2(q^n)$. This is a special case of
Proposition \ref{ca2}.

It is well known that there is an embedding of $e~:~H\ra G $. Let
$T$ be the subgroup of $H$ of order $q^n-1$ (one can describe
 $T$ as the matrix group $\{\diag(a,a\up): a\in \FF_{q^n}^\times
 \}<  SL_2(q^n)$.

\begin{prop}\label{sp99} Let $G=Sp_{2n}(q)$, $n> 1$, $(n,q)\neq (2,2),(2,3)$.
Let $C=e(T)$. Let $M$ be a non-trivial \ir $\FF G$-module with
central character $\zeta$. Then $C$ is self-centralizing and
$M|_C$ contains the induced module $\Ind^C_{Z(G)}(\zeta)$.
\end{prop}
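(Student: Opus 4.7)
The plan is to imitate the proof of Proposition \ref{ca2}, working directly in $G = Sp_{2n}(q)$ but exploiting the embedding $e : H = SL_2(q^n) \hookrightarrow G$ that realizes $G$ as the group preserving on $V = \FF_{q^n}^2$ the $\FF_q$-bilinear symplectic form obtained from an $\FF_{q^n}$-symplectic form $\omega$ by composition with $\mathrm{Tr}_{\FF_{q^n}/\FF_q}$. Under this embedding, $H$ brings with it its diagonal torus $T$ and its upper-triangular unipotent subgroup $A_H$, and these will play the roles of the cyclic torus and the abelian $p$-subgroup in the argument of Proposition \ref{ca2}.

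I would first verify that $C = e(T)$ is self-centralizing in $G$. Decompose $V = V_1 \oplus V_2$ into the two $C$-weight spaces over $\FF_{q^n}$, on which $C \cong \FF_{q^n}^\times$ acts by $\FF_q$-linear multiplication by $a$ and $a^{-1}$ respectively. Since $\omega(cv, cw) = \chi_i(c)^2 \omega(v,w)$ for $v, w \in V_i$, and $a \mapsto a^2$ is non-trivial on $\FF_{q^n}^\times$ when $q^n > 3$, each $V_i$ is totally isotropic, so $V_1, V_2$ are paired non-degenerately. The commutant of $C$ in $GL(V)$ is therefore $\FF_{q^n}^\times \times \FF_{q^n}^\times$ (each factor acting on one $V_i$ by multiplication), and the symplectic constraint $\alpha \beta = 1$ cuts this down to exactly $C$.

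Next, set $A = e(A_H)$, an elementary abelian $p$-group of order $q^n$ normalized by $C$. The conjugation action of $T$ on $A_H$ is $x \mapsto a^2 x$, so the kernel of the $C$-action on $A$ is $e(Z(H) \cap T) = Z(G) \cap C$; hence every orbit of $C/(Z(G) \cap C)$ on $A \setminus \{1\}$ is regular, and by duality (since $(|A|,\ell)=(p,\ell)=1$) so is every orbit on $\Irr(A) \setminus \{1_A\}$. To apply Lemma \ref{ab2}, observe that since $G/Z(G) = PSp_{2n}(q)$ is simple for the hypothesised values of $(n,q)$ and $A \not\leq Z(G)$, the $G$-normal closure of $A$ equals $G$, so $A$ cannot act trivially on the non-trivial irreducible module $M$. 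Decomposing $M = \bigoplus_\alpha M_\alpha$ over $\Irr(A)$, some $M_\alpha \neq 0$ with $\alpha \neq 1_A$. By the regular-orbit property $\Stab_C(\alpha) = Z(G) \cap C = Z(G)$, which acts on every vector of $M_\alpha$ by the central character $\zeta$ of $M$, supplying the common eigenvector required by Lemma \ref{ab2}. The lemma then yields $M|_C \supseteq \Ind^C_{Z(G)}(\zeta)$, as desired.

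The main technical obstacle is the self-centralizing step for $C$: it requires precisely matching the $\FF_{q^n}$-structure on $V$ with the symplectic form to control $C_{Sp(V)}(C)$ inside $C_{GL(V)}(C)$, which is why the exclusions $(n,q) = (2,2), (2,3)$ are needed (so that $\chi_i^2 \neq 1$ and so that $PSp_{2n}(q)$ is simple). Once this is in place, together with the identification $e(Z(H) \cap T) = Z(G) \cap C$, the argument is essentially a direct translation of the proof of Proposition \ref{ca2} applied inside $G$.
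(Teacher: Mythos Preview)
Your proposal is correct and follows essentially the same route as the paper, which simply invokes Proposition \ref{ca2} for $SL_2(q^n)$ (whose proof you have unwound here) and then verifies that $e(T)$ is self-centralizing via Schur's lemma and Lemma \ref{8h10}. One small correction: your claim that the commutant of $C$ in $GL(V)$ equals $\FF_{q^n}^\times \times \FF_{q^n}^\times$ tacitly uses that $V_1 \not\cong V_2$ as $\FF_q C$-modules (equivalently, that the characters $a\mapsto a$ and $a\mapsto a^{-1}$ of $\FF_{q^n}^\times$ are not $\mathrm{Gal}(\FF_{q^n}/\FF_q)$-conjugate), and it is \emph{this}, not the condition $\chi_i^2\neq 1$, that forces the exclusion $(n,q)=(2,2)$ --- each $V_i$ is already totally isotropic simply because it is one-dimensional over $\FF_{q^n}$.
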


\begin{proof}  The result follows from Proposition \ref{ca2} for
$SL_2(q^n)$ as long as we show that $e(T)$ is self-centralizing in
$G$. It is well known that $e(T)$ is a maximal torus for $G$,
which implies the claim, provided the torus is self-centralizing.
However, we provide an elementary direct argument for this.

Note that the embedding $e$ is obtained via a vector space
embedding $\FF_{q^n}^2\ra \FF_{q}^{2n}$, induced from the
embedding $\FF_{q^n}\ra  \FF_{q}^{n}$ when $\FF_{q^n}$ is viewed
as a vector space over $\FF_q$. \itf $e(T)$ is reducible, and in
fact a sum of two \ir $n$-dimensional $\FF_q e(T)$-submodules
$W_1,W_2$, say. They are totally isotropic as $Sp_{2n}(q)$
contains no \ir element of order $q^n-1$. In this situation it is
a standard fact that  $V$ has a basis $\{b_1\ld b_{2n}\}$ such
that $b_i\in W_1$, $b_{n+i}\in W_2$ for $i=1\ld n$, and
$(b_i,b_{n+j})=\delta_{ij}$. Then the representations of $T$ on
$W_1,W_2$ are dual to each other. As $T$ is cyclic, the dual \rep
is obtained via the automorphism $t\ra t\up$ ($t\in T$). They are
non-equivalent unless $n=2,q=2$, see Lemma \ref{8h10}.  It follows
from Schur's lemma that $C_G(e(T))=e(T)$, as claimed.
\end{proof}

Note that $Sp_4(2)$ is not simple, and $PSp_4(3)\cong SU_4(2)$.
Proposition \ref{sp99} is not true for $Sp_4(3)$, but Theorem
\ref{111} is true due to this isomorphism and Proposition
\ref{872} below. Note that $Sp_4(3)$ (but not $PSp_4(3)$) is an
exception also for Proposition \ref{ab1}.

\subsection{Groups $G$ with $G/Z(G)=P\Omega^+_{2n}(q)$, $n>3$,
 or $PSU_{2n}(q)$, $n>1$}

 Set $r=q$ if $G/Z(G)=P\Omega^+_{2n}(q)$,
and $r=q^2$ if $G/Z(G)=PSU_{2n}(q)$.
Let $V$ be a unitary space of dimension $2n>2$
over $\FF_r$, or an orthogonal of dimension $2n>6$ and Witt index
$n$ over $\FF_r$. This means that $V$ has a totally isotropic subspace
$W$ of dimension $n$ (totally singular, if $V$ is orthogonal and
$r$ is even).

Let $B=\{b_1\ld
b_{2n}\}$ be a basis of $V$ such that $b_1\ld b_n\in W$
 and the Gram matrix $\Gamma$ corresponding to $B$ is of shape $$\Gamma
=\begin{pmatrix}0&\Id_n\cr \Id_n&0\end{pmatrix}, $$ Then
$V=W\oplus W_1$, where   $W_1=\lan b_{n+1}\ld b_{2n}\ran$ is a
totally isotropic (singular) subspace of $V$. If   $B$ is fixed,
 $G(V)$ can be described as $\{g\in GL(V): g\Gamma \sigma (g)^t =
\Gamma\}$, except when $V$ is orthogonal and $p=2$.

Let $P$ be the stabilizer of $W$ in $G(V)$, and $H=\{ g\in P:
gW_1=W_1\}$. Under the basis $B$, the matrices of $H$ are of shape
$\diag(h,\si (h^t)\up )$, where $h\in GL_n(r)$. If $V$ is
orthogonal and  $q$ is odd then $H$ is isomorphic to a subgroup of
order 2 in $GL_n(r)$, see \cite[Lemma 2.7.2]{KL90}. If $V$ is
unitary then the determinant condition $\det h\si (h^t)\up =1$
implies $\si (\det  (h))=\det(h)$, whence $\det(h)\in U_1(q)$.
Therefore, $H$ is isomorphic to a normal subgroup of $GL_n(r)$ of
index $q+1$. In other cases $H\cong GL_n(r)$.

Set $X=\diag(x,\sigma ( x^t)^{-1})$, where
$x\in GL(W)$ is \ir of maximal order subject to the condition
that $X\in G$.
If $V$ is  orthogonal with $q$ even 
then $|x|=r^n-1$. In the unitary case $\det (x)\in \FF_{q}$
implies $|x|=(r^n-1)/(q+1)$. Set $T=\lan X\ran$.

\begin{lem}\label{nm11}
The subgroup $T$ is self-centralizing.
\end{lem}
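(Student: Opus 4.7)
The plan is to compute the centraliser of $X$ in the full isometry group $G(V)$ explicitly, show that it is cyclic, and then use the maximality built into the choice of $x$ to conclude $C_G(X)=T$.

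I decompose $V=W\oplus W_1$ and write any $g$ commuting with $X=\diag(x,y)$, where $y:=\sigma(x^t)^{-1}$, in block form $g=\begin{pmatrix}a&b\\c&d\end{pmatrix}$. The commutation $gX=Xg$ reads $ax=xa$, $dy=yd$, $by=xb$ and $cx=yc$. Since $x$ is irreducible in $GL_n(r)$, its enveloping algebra $\FF_r[x]$ is a field isomorphic to $\FF_{r^n}$, and Schur's lemma forces $a\in\FF_r[x]$ acting as a scalar on the one-dimensional $\FF_{r^n}$-module $W$; similarly $d\in\FF_r[y]$. The equations for $b$ and $c$ say that they are intertwining maps between the faithful, $\FF_r$-irreducible, cyclic $\FF_r\langle X\rangle$-modules on $W$ (where $X$ acts as $x$) and on $W_1$ (where $X$ acts as $y$); by irreducibility such an intertwiner is either zero or an isomorphism, so its existence is equivalent to $x$ and $y$ being conjugate in $GL_n(r)$.

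The crucial step is to rule out this conjugacy. In the orthogonal case $\sigma=1$, hence $y=(x^t)^{-1}\sim x^{-1}$, and Lemma~\ref{8h10}(i) gives $x\not\sim x^{-1}$ under the hypothesis $n>3$. In the unitary case $x$ has order $(r^n-1)/(q+1)$, hence is conjugate to $s^{q+1}$ for some irreducible $s\in GL_n(r)$ of order $r^n-1$, and Lemma~\ref{8h10}(iii) gives $x\not\sim\sigma(x^{-1})$ since $n>1$. In both cases $b=c=0$.

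With $g=\diag(a,d)$, the isometry condition $g\Gamma\sigma(g^t)=\Gamma$ collapses to $a\sigma(d^t)=\Id$, i.e.\ $d=\sigma(a^t)^{-1}$, so
\[
C_{G(V)}(X)\;=\;\bigl\{\diag(a,\sigma(a^t)^{-1}):a\in\FF_{r^n}^\times\bigr\},
\]
a cyclic group of order $r^n-1$ under the correspondence $a\mapsto\diag(a,\sigma(a^t)^{-1})$. The subgroup $C_G(X)=C_{G(V)}(X)\cap G$ is therefore cyclic as well, and $T=\langle X\rangle\leq C_G(X)$. By definition $|x|$ was chosen maximal subject to $X\in G$, so $X$ is a generator of the cyclic group $C_G(X)$; hence $T=C_G(X)$. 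The main obstacle here is exactly the non-conjugacy of $x$ and $y$ in $GL_n(r)$, which is supplied by Lemma~\ref{8h10}; the low-rank exceptions listed there are precisely excluded by the hypotheses $n>3$ (orthogonal) and $n>1$ (unitary).
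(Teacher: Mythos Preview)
Your proof is correct and follows essentially the same line as the paper's: both argue that the two $\FF_r$-irreducible representations of $\langle X\rangle$ on $W$ and $W_1$ are inequivalent via Lemma~\ref{8h10}, apply Schur's lemma to force the centraliser to be block-diagonal, and identify it with $\FF_{r^n}^\times$. You are more explicit than the paper in two places --- spelling out the block computation and the intertwining interpretation of the off-diagonal blocks, and supplying the final maximality argument to pass from ``$C_{G(V)}(X)$ is cyclic'' to ``$T=C_G(X)$'' --- but the underlying idea is identical.
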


\begin{proof} 
Let $\rho_1,\rho_2:T\ra GL_n(q)$ be the representations defined by
$\rho_1(X)=x$ and $\rho_2(X)=\sigma ( x^t)^{-1}$. Then
$\rho_1,\rho_2$ are non-equivalent. 
Indeed, $\sigma ( x^t)^{-1}=gxg\up$ for some $g\in GL_n(r)$ \ii
$\sigma  (x)^{-1}=hxh\up$ for some $h\in GL_n(r)$ (as $x$ and
 $x^t$ are similar matrices). Hence the claim follows from Lemma \ref{8h10}.

By Schur's lemma,  $C_{G(V)}(X)=\diag (C,\si(C\up))$,  where $C$
runs over $C_{GL_n(r)}(x)$, which is isomorphic to
$F_{r^n}^\times$. So $C_{G(V)}(X)$ is cyclic. \end{proof}

\med
With respect to the above basis
  matrices of $A$
  are of shape:

\begin{equation}\label{e1}
\begin{pmatrix}
 \Id_n&   b  \cr
 0&\Id_n \end{pmatrix},~~
\end{equation}
where $b\in GL_n(r)$.
We observe that $H$ normalizes $A$ and $C_H(A)=Z(G(V))$.

\noindent Obviously, $A$ is an abelian group of exponent $p$.  Let
$L,L_0$ be as in (1) and (2) in Section 2 above.

\begin{lem}\label{222}
Let $a\in A$ be
a matrix of shape $(\ref{e1})$.

{\rm (i)} Suppose that $V$ is not an orthogonal space in
characteristic $2$. Then
$a\in G(V)$ \ii $b\in L$.

{\rm (ii)} Suppose that $V$ is  an orthogonal space in characteristic
$2$. Then
$a\in G(V)$ \ii $b\in L_0$.

{\rm (iii)} The mapping $M\ra b$ is a bijection $A\ra L$, unless $p=
2$ and $\si= 1$ when this is a bijection between $A$ and $L_0$.

{\rm (iv)} If $V$ is orthogonal then $A < \Omega^+_{2n}(q)$.
\end{lem}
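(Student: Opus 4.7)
The plan is to handle parts (i)--(iii) by direct block-matrix computation and part (iv) by a separate structural argument, distinguishing the two characteristics in the orthogonal case.

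For (i), I would compute $a\,\Gamma\,\si(a^t)$ in $n\times n$ blocks with $a=\begin{pmatrix} \Id_n & b\\ 0 & \Id_n\end{pmatrix}$ and $\Gamma=\begin{pmatrix} 0 & \Id_n\\ \Id_n & 0\end{pmatrix}$. Three of the four blocks of the product automatically reproduce those of $\Gamma$, while the upper-left block works out to $b+\si(b^t)$. Hence $a\in G(V)$ exactly when $\si(b^t)=-b$. In the unitary case this is the defining condition of $L$ (with $\eps=1$ by the stated convention); in the orthogonal case in odd characteristic, $\si=1$ so the condition reads $b^t=-b$, again the defining condition of $L$ (with $\eps=1$). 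Part (iii) then follows at once, since a matrix of shape \eqref{e1} is uniquely determined by its $(1,2)$-block, so $a\mapsto b$ is a bijection onto the corresponding subset of $M$.

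Part (ii) requires slightly more care because in characteristic $2$ the group $G(V)$ is defined by preservation of the quadratic form $Q$ (strictly stronger than preservation of $\Gamma$). In the singular basis $B$, one may take $Q(v_1+v_2)=v_1^t v_2$ for $v_1\in W$, $v_2\in W_1$; a short calculation gives $Q(av)-Q(v)=v_2^t\,b^t\,v_2$, which expands to $\sum_i b_{ii}v_{2,i}^2 + \sum_{i<j}(b_{ij}+b_{ji})v_{2,i}v_{2,j}$. In characteristic $2$, vanishing for every $v_2\in\FF_q^n$ forces both a zero diagonal and $b^t=b$, i.e.\ $b\in L_0$; conversely, any such $b$ gives $a\in G(V)$. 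The bijection statement in (iii) for this case is identical to the non-orthogonal case.

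Part (iv) is the one step requiring a genuinely structural argument. Every element of $A$ is unipotent of order dividing $p$, and $|A|=q^{n(n-1)/2}$ in both cases. If $p$ is odd, $|A|$ is odd while $[\SO^+_{2n}(q):\Omega^+_{2n}(q)]=2$ via the spinor norm, so $A/(A\cap\Omega^+_{2n}(q))$ has order dividing $\gcd(2,|A|)=1$, giving $A\subseteq \Omega^+_{2n}(q)$. If $p=2$, then $\Omega^+_{2n}(q)$ is the kernel of the Dickson invariant $D$, characterised by $D(g)\equiv \mathrm{rank}(g-\Id)\pmod{2}$; for $a\in A$ we have $\mathrm{rank}(a-\Id)=\mathrm{rank}(b)$, and a symmetric matrix with zero diagonal over a field of characteristic $2$ is the matrix of an alternating bilinear form, hence has even rank, so $D(a)=0$. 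The only delicate point in the entire lemma is this invocation of the Dickson characterization of $\Omega^+$ in even characteristic; everything else amounts to careful bookkeeping with the form-preservation equations once the correct presentation of $G(V)$ is written down.
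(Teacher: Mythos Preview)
Your proof is correct. Parts (i)--(iii) are precisely the direct block computations that the paper defers to the reference \cite{DZ}; the paper gives no details here, so your writeup is in fact more self-contained. For part (iv) with $p$ odd, your argument and the paper's are the same (the quotient $G(V)/\Omega$ has order prime to $p$, so the $p$-group $A$ lands in $\Omega$).

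For part (iv) with $p=2$, your approach genuinely differs from the paper's. The paper argues indirectly: since $|G(V):\Omega|=2$, either $HA\leq\Omega$ or $HA$ has a subgroup of index $2$; since $H\cong GL_n(q)$ (with $n>2$, $q$ even) has no subgroup of index $2$, one would need an $H$-stable index-$2$ subgroup of $A$, contradicting the irreducibility of $L_0$ as an $\FF_q H$-module (cited from \cite{EZ}). Your argument is more direct and entirely elementary: you compute the Dickson invariant of $a\in A$ as $\mathrm{rank}(a-\Id)=\mathrm{rank}(b)\pmod 2$ and observe that an alternating matrix has even rank. This avoids the external appeal to irreducibility of $L_0$ and works uniformly for all $n\geq 2$, whereas the paper's argument requires $n>2$ for the irreducibility citation (and for $H$ to lack an index-$2$ subgroup). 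The paper's route has the minor advantage that it reuses the module-theoretic viewpoint already in play in this section, but yours is cleaner for this particular lemma.
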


\begin{proof}
For (i)--(iii) see \cite[p.240]{DZ}. For (iv), suppose first
that $p\neq 2 $. Set $\Omega=\Omega_{2n}(q)$. As $\Omega$ is a
normal subgroup in $G(V)$ of index 4,   the result follows.
Suppose that $p=2$.
  In this case $|G(V):\Omega|=2$, so either $HA< \Omega$ or
$HA$ has a  subgroup of index 2. As $p=2$ and $n>2$, $H$ has no
subgroup of index 2. So $A< \Omega$ unless $A$ has an
$H$-stable subgroup of index 2. This contradicts \cite[Lemma
4.6]{EZ}, where it is shown that $L_0$ is an \ir $\FF_rH$-module
(provided that $n>2$).
\end{proof}

\medskip
Note that $A < SU_n(q)$ in the unitary case.

\begin{lem}\label{333}
Every $P/Z(G)$-orbit on  $A\setminus \{1\}$
contains a regular $T/Z(G)$-orbit.
\end{lem}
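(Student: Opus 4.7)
The plan is to transport the problem to the congruence action of $H$ on $L$ (or on $L_0$ when $p=2$ and $V$ is orthogonal), and then invoke the regular-orbit statement Lemma \ref{sz1} in the spirit of the proof of Lemma \ref{p44}.

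First I would note that $P=HA$, that $A$ is abelian, and that $Z(G)\leq Z(G(V))$ fixes $A$ pointwise; hence the $P/Z(G)$-orbits on $A\setminus\{1\}$ coincide with the $H$-orbits. Under the parametrization of Lemma \ref{222}, the bijection $a\leftrightarrow b$ is $H$-equivariant and the induced action is the congruence action $b\mapsto hb\sigma(h^t)$, whose kernel is $D_\sigma$ by Lemma \ref{dd4}. A short calculation using $T\subseteq G$ shows that $T\cap D_\sigma=T\cap Z(G)$: the equation $X^k=\lambda\Id$ forces $\sigma(\lambda)=\lambda^{-1}$, which is precisely the defining relation of $D_\sigma$, and any scalar of $G(V)$ that already lies in $G$ must lie in $Z(G)$. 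Hence the assertion of the lemma is equivalent to: for every $0\neq b\in L$ (resp.\ $L_0$) there exists $h\in H$ with $C_T(hb\sigma(h^t))=T\cap D_\sigma$.

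Next I would pass to the quotient $\bar H:=HD_\sigma/D_\sigma$ and the cyclic subgroup $\bar T:=TD_\sigma/D_\sigma$. Because $SL_n(r)\leq H\leq GL_n(r)$ and $D_\sigma$ is central, one has $PSL_n(r)\leq\bar H\leq PGL_n(r)$. For a fixed $0\neq b$ the $\bar H$-orbit $\Pi$ through $b$ is a permutation $\bar H$-set on which $PSL_n(r)$ acts non-trivially: by Lemma \ref{p87}, $PSL_n(r)$ has no non-zero fixed vector in $L$ or $L_0$ under the present numerical hypotheses (in the unitary case $n\geq 2$ with $\sigma\neq 1$; in the orthogonal case $n>3$, so in particular $n>2$, which is also what Lemma \ref{dd4} requires for $L_0$). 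Applying Lemma \ref{sz1} with $X=\bar H$, $C=\bar T$ and $\Pi$ then produces $b'=hb\sigma(h^t)\in\Pi$ with $C_{\bar T}(b')=1$, i.e.\ $C_T(b')=T\cap D_\sigma$, as required.

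The main obstacle, and the only delicate point, will be to rule out the two sporadic exceptions in Lemma \ref{sz1}. The case $\bar H\supseteq PGL_2(5)$ with $|\Pi|=5$ would require $(n,r)=(2,5)$, which is impossible here: in the unitary case $r=q^2$ forces $r\neq 5$, while in the orthogonal case the standing assumption $n>3$ rules out $n=2$. The case $\bar H\cong SL_4(2)\cong\AAA_8$ with $|\Pi|=8$ would require $(n,r)=(4,2)$, so it can only appear in the orthogonal plus case with $q=2$; but then a point stabilizer in $\bar H$ on a size-$8$ orbit would contain a subgroup isomorphic to $\AAA_7$, and the additional clause of Lemma \ref{p87} asserts that no such subgroup fixes any non-zero element of $L_0$. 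A small technical check, to be handled at the very start of Step~2, is that $\bar T$ contains the image of a Singer subgroup of $SL_n(r)$, so that the relevant irreducibility of $L$ (or $L_0$) as a $PSL_n(r)$-module may legitimately be invoked.
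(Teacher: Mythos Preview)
Your proposal is correct and follows essentially the same route as the paper. The paper's proof is a one-liner that reduces to Lemma~\ref{p44} via the identification of $A$ with $L$ (resp.\ $L_0$) provided by Lemma~\ref{222}; your proposal simply unpacks that citation, reproducing the proof of Lemma~\ref{p44} (Lemma~\ref{sz1} together with Lemma~\ref{p87}, including the $\AAA_7$-clause for $(n,r)=(4,2)$) directly for the cyclic group $\bar T$ rather than for a Singer subgroup of $SL_n(r)$.

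One small remark: your closing ``technical check'' that $\bar T$ contains the image of a Singer subgroup of $SL_n(r)$ is not needed. Lemma~\ref{sz1} applies to \emph{any} cyclic subgroup $C$ of $X$, and the non-triviality of the $PSL_n(r)$-action on $\Pi$ comes from Lemma~\ref{p87} applied to $SL_n(r)\leq H$, not to $T$. So that sentence can be dropped without loss.
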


\begin{proof}
This follows from Lemma  \ref{p44}. Indeed, by Lemma
\ref{222}(i), resp., \ref{222}(ii), $A$ and $L^+$, resp., $L_0^+$,
are  isomorphic $H$-sets. (Here $L^+$, resp. $L_0^+$, is the
additive group of $L$, resp. $L_0$.) The action of $H$ on $A$ by
conjugation corresponds to the congruence action of $GL_n(r)$ on
$L$, resp. $L_0$,  and hence we may use Lemma \ref{p44}.
\end{proof}

\medskip
Recall that $D_\si=\{g\in Z(GL_n(r)): g \si(g)=1\}$, see Lemma
\ref{dd4}. Let $D$ be the image of $D_\si $ in $H$ under the
isomorphism $GL_n(r)\ra H$. It is clear that  $D  \leq
Z(G(V))$.

\begin{prop}\label{872}
Let 
$G/Z(G)=P\Omega^+ _{2n}(q)$ or $PSU_{2n}(q)$. Let  $M$ be a
non-trivial \ir $\FF G$-module with central character $\zeta$.
 Then $M|_{T}$ contains a submodule isomorphic to ${\rm Ind}_{Z(G)}^T\zeta $.
In particular, if $\zeta=1_{Z(G)}$ then $M|_{T}$ contains  a
regular submodule, and hence $1_T\in M|_T$.\end{prop}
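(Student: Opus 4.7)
The plan is to apply Lemma \ref{ab2} to the action of $T$ on the abelian $p$-group $A$, after locating a suitable non-trivial character $\alpha\in \Irr(A)$ occurring in $M|_A$ whose stabilizer in $T$ is exactly $Z(G)$.

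First I would show that $M|_A$ has a non-trivial eigencharacter. Since $G/Z(G)$ is simple and $M$ is a non-trivial irreducible $\FF G$-module, the kernel of the representation lies in $Z(G)$. Because $|Z(G)|$ is coprime to $p$ while $A$ is a non-trivial $p$-group, $A\cap Z(G)=1$, so $A$ acts non-trivially on $M$. Maschke's theorem (applicable since $\ell\nmid |A|$) gives $M|_A=\bigoplus_{\alpha\in\Irr(A)}M_\alpha$, and $M_\alpha\neq 0$ for some $\alpha\neq 1_A$.

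Next I would upgrade $\alpha$ to one whose $T$-stabilizer is as small as possible. Lemma \ref{333} (proved via Lemmas \ref{p44} and \ref{222}, which identify $A$ with $L^+$ or $L_0^+$ as an $H$-set and thereby identify $\Irr(A)$ with characters of $L^+$ or $L_0^+$) shows that every $P/Z(G)$-orbit on $\Irr(A)\setminus\{1_A\}$ meets a regular $T/Z(G)$-orbit. Choose $p\in P\leq G$ so that $\beta:=p\cdot\alpha$ satisfies $\Stab_T(\beta)=K$, where $K$ is the kernel of the conjugation action of $T$ on $A$. By Lemma \ref{dd4} pulled back through Lemma \ref{222}(iii), $K=T\cap Z(G(V))$, which coincides with $Z(G)$ in our quasi-simple cover (using $Z(G)\leq T$ built into the construction of $T$). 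Conjugation by $p$ moves $M_\alpha$ to $M_\beta$, so $M_\beta\neq 0$.

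With $\beta$ in hand, Lemma \ref{ab2} finishes the argument. Any non-zero $v\in M_\beta$ is a common $Z(G)$-eigenvector on which $Z(G)$ acts by the scalar character $\zeta$; applying Lemma \ref{ab2} with $B=T$, $A=A$, $C=\Stab_T(\beta)=Z(G)$, and $\gamma=\zeta$ gives
\[
M|_T \;\supseteq\; \Ind^T_{Z(G)}(\zeta).
\]
If $\zeta=1_{Z(G)}$, the right-hand side is the regular representation of the cyclic quotient $T/Z(G)$ inflated to $T$; in particular $1_T$ occurs in $M|_T$, completing the proof.

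The main technical obstacle is the second step: justifying that the stabilizer $\Stab_T(\beta)$ really equals $Z(G)$. This is not conceptually deep but requires carefully tracking the inclusions $Z(G)\leq T\leq H\hookrightarrow G(V)$, together with the identification of the kernel of the Levi's congruence action on $L$ (or $L_0$) in Lemma \ref{dd4}. Once this bookkeeping is done, the argument reduces cleanly to the mechanical application of Lemma \ref{ab2}.
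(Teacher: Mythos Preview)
Your approach is essentially the same as the paper's: locate a non-trivial $\alpha\in\Irr(A)$ occurring in $M$, move within its orbit to some $\beta$ whose $T$-stabilizer is minimal (using Lemma~\ref{p44} via Lemma~\ref{333} and the identification of $A$ with $L^+$ or $L_0^+$), identify that stabilizer with $Z(G)$, and then apply Lemma~\ref{ab2}. The paper executes exactly this plan.

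One point where the paper is more explicit and your write-up is a bit loose: the groups $A$, $T$, $H$, $P$ are defined inside $G(V)$ (equivalently inside $\Omega^+_{2n}(q)$ or $SU_{2n}(q)$), but $G$ is only assumed to satisfy $G/Z(G)\cong P\Omega^+_{2n}(q)$ or $PSU_{2n}(q)$; for instance $G$ could be $\mathrm{Spin}^+_{2n}(q)$. So $T$ and $A$ are not literally subgroups of $G$, and your inclusion ``$Z(G)\le T\le H\hookrightarrow G(V)$'' does not make sense as written. The paper handles this by taking a $p$-subgroup $A_1\le G$ projecting onto $A$ (possible since $(|Z(G)|,p)=1$) and the preimage $T_1$ of $T$ in $G$, then transferring the orbit statement to the $T_1$-action on $\Irr(A_1)$; the stabilizer computation shows $C_{T_1}(\beta)=Z(G)$ because $D\le Z(G(V))$. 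Once you make this lifting explicit, your argument coincides with the paper's.
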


\begin{proof} This follows from Lemmas \ref{ab2} and \ref{333}.
Indeed, as $(p,|Z(G)|)=1$, there is a $p$-subgroup $A_1$ in $G$
which projects to $A$ under the \ho $G\ra \Omega_{2n}(q)$, and the
preimage of $A$ coincides with $A_1Z(G)$. If $T_1$, resp., $S$, is
the preimage of $T$, resp. $SL_n(q)$, in $G$ then $A_1$ is $T_1$-
and $S$-invariant. Moreover, $Z(G)$ is in the kernel of the
conjugation  action of $T_1$ and $S$ on $A_1$, so $A_1$ can be
viewed as $T_1/Z(G)$ and $S/Z(G)$-sets. Then $A$ and $A_1$ are
isomorphic with respect to these actions.

We can decompose $M$ as a direct sum of homogeneous $A_1$-modules
$M=\oplus_{\al\in \Irr A_1} M_\al$. Obviously, this decomposition
can be arranged as follows: $M=\oplus_{O}(\oplus_{\al\in O}
M_\al)$, where the $O$'s are $N_G(A_1)$-orbits on $\Irr (A_1)$. As
$[S,A_1]\neq 1$, it follows that there exists $\al\in\Irr (A_1)$,
$\al\neq 1_{A_1}$ such that $S\al\neq \al $. Note that the
conjugation action of $S$ on $A_1$ is realized via the congruence
action of $SL_n(r)$ on $L$ or $L_0$ (see Section 2). Fix this
$\al$, and let $O_1=S\al$ be the $S$-orbit of $\al$. By Lemma
\ref{p44},  there is $\beta\in O_1$ such that $t\beta=\beta$ for
$t\in T$ implies $t\in D$. As  $D  \leq Z(G(V))$, it follows that
$t\beta=\beta$ for $t\in T_1$ implies $t\in Z(G)$. Thus,
$C_{T_1}(\beta)=Z(G)$. Let $\beta|_{Z(G)}=\zeta$. Then, by Lemma
\ref{ab2}, $M|_{T}$ contains a submodule isomorphic to ${\rm
Ind}_{Z(G)}^T(\zeta)$. \end{proof}

\subsection{The groups  $G$ with $G/Z(G)=\Omega_{2n+1}(q)$, $n>2$,
$q$ odd, or  $PSU_{2n+1}(q)$, $n\ge 1$}

Let $V$ be  a unitary  or orthogonal space of dimension $2n+1$
over $\FF_r,$ where $r=q^2$  in the unitary case, and $r=q$
otherwise. Let $W$ be a maximal totally isotropic subspace of $V$,
so $\dim W=n$. Let $G_1=\Omega_{2n+1}(q)$ in the orthogonal case,
and $G_1=SU_{2n+1}(q)$ in the unitary case.

We wish to mimic the reasoning in Subsection 3.3; in particular,
the groups $A,T$ here are analogous to those above. However, Lemma
\ref{333} is not true for our current situation, as $C_T(A)$ is
not necessarily in $Z(G)$.

Let $B=\{b_1\ld b_{2n+1}\}$ be a basis of $V$ such that $b_1\ld
b_n\in W$
 and the Gram matrix $\Gamma$ corresponding to $B$ is of shape

$$ \begin{pmatrix}0&0&\Id_n\cr 0&1&0\cr \Id_n&0&0\end{pmatrix} $$
Then $V=W\oplus V_0\oplus W_1$, where $V_0=\lan b_{n+1}\ran$ and
$W_1=\lan b_{n+2}\ld b_{2n+1}\ran$. So $V_0^\perp =W+W_1$. By
definition, $G(V) = \{ g \in GL_{2n+1}(r): g\Gamma \sigma (g)^t =
\Gamma\}$. Let $H$ be the stabilizer in $G_1$ of each of the
subspaces $W,W_1,V_0$
of this decomposition. 
Then $H$ consists of matrices

\begin{equation}\label{eq5}
\begin{pmatrix}h&0&0\cr 0&f&0\cr 0&0&\si(h^t)\up\end{pmatrix},
\end{equation}
where $h\in GL_n(r)$ and  $f\in G(V_0)$. If $V$ is orthogonal then
$H< SO_{2n+1}(q)$, whence  $f=1$. Note that $V_0^\perp$ has Witt
index $n$. Therefore, $O(V_0^\perp)=O^+_{2n}(r)$, and hence $H$ is
isomorphic to a subgroup of index 2 in $GL_n(r).$
 If $V$ is unitary then
 $f\cdot \det h\cdot \det \si(h\up)=1$.
\itf $H$ is isomorphic to $GL_n(r)$.

Let $x\in GL(W)$ be a Singer cycle, so $|x|=r^n-1.$ We set

$$X=\begin{pmatrix}x&0&0\cr 0&e&0\cr 0&0&\sigma ( x^t)^{-1}
\end{pmatrix}~~~{\rm and}~~~T=\lan X\ran,$$
where $e= \si(\det x)\cdot\det x\up$. So $T$ is a cyclic subgroup
of $G(V)$. Note that $ x^{(r^n-1)/(r-1)} $ generates  the center
of $GL_n(r)$ and $C_{GL_n(r)}(x)=\lan x \ran$.

\begin{lem}\label{na1}
Let 
 $T=\lan X \ran$.
Then one of the \f holds:

\medskip
{\rm (i)} $C_{G_1}(X)=T$.

\medskip
{\rm (ii)} $V$ is unitary of dimension $3$ and $q=2$ or $3$.
\end{lem}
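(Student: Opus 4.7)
The plan is to prove $C_{G(V)}(X) = T$, from which $C_{G_1}(X) = T$ follows since $T \leq G_1$. The strategy exploits the block-diagonal structure of $X$, reducing the centralizer computation to Schur's lemma on each block, together with the constraints coming from the form and the determinant.

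The heart of the argument is to show that, outside case (ii), the characteristic polynomials of the three blocks $X|_W = x$, $X|_{V_0} = e$, and $X|_{W_1} = \sigma(x^t)^{-1}$ are pairwise coprime, forcing any $g \in C_{G(V)}(X)$ to preserve the decomposition $V = W \oplus V_0 \oplus W_1$. Since $x$ is an irreducible Singer cycle, its eigenvalues in the algebraic closure of $\FF_r$ form a complete Galois orbit $\{\zeta^{r^i} : 0 \leq i < n\}$ for a generator $\zeta$ of $\FF_{r^n}^{\times}$; the eigenvalues of $\sigma(x^t)^{-1}$ form an analogous orbit; and $e$ lies in $\FF_r$ (with order dividing $q+1$ in the unitary case). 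For $n \geq 2$ both orbits lie entirely in $\FF_{r^n} \setminus \FF_r$, so disjointness from $\{e\}$ is automatic; the pairwise disjointness of the two orbits then reduces to the elementary inequality $r^i + r^j < r^n - 1$ in the orthogonal case ($n \geq 3$, $q$ odd), or $q^{2i} + q^{2j+1} < q^{2n} - 1$ in the unitary case, both easily verified in the stated ranges. For $n = 1$ (unitary of dimension $3$), direct inspection of the three scalars $x$, $e = x^{q-1}$, $\sigma(x)^{-1} = x^{-q}$ shows they are distinct precisely when $q \geq 3$; for $q = 2$ all three equal $x$ and $X$ reduces to a scalar matrix, accounting for the genuine failure of (i) in (ii).

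Once $g$ is known to preserve the decomposition, Schur's lemma gives $g|_W = x^i$, $g|_{W_1} = \sigma(x^t)^{-j}$, and $g|_{V_0} = \lambda$ a scalar, using that $C_{GL_n(r)}(x) = \FF_r[x]^{\times}$ is cyclic of order $r^n - 1$. Expanding the form condition $g\Gamma\sigma(g^t) = \Gamma$ block-by-block yields $i = j$ and $\lambda\sigma(\lambda) = 1$, while $\det g = 1$ then forces $\lambda = e^i$, so $g = X^i \in T$. The main obstacle lies in the eigenvalue-disjointness verification at the borderline case $n = 1$, where the spectrum of $X$ truly threatens to collapse and a careful scalar-level check is required; all remaining cases amount to routine estimates and bookkeeping.
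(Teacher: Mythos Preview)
Your argument follows the same route as the paper's: show that the three $\langle X\rangle$-representations on $W$, $V_0$, $W_1$ are pairwise inequivalent (the paper quotes Lemma~\ref{8h10} for the $W$/$W_1$ comparison, you compute eigenvalue orbits directly), deduce that any centralizing element is block-diagonal, then apply Schur's lemma together with the form and determinant constraints to identify the blocks. Your explicit eigenvalue analysis is more self-contained and also makes transparent why $V_0$ cannot mix with the other summands, a point the paper leaves implicit; at $n=1$ your computation even shows that (i) holds for $q=3$, slightly sharper than the lemma as stated.

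One slip in the framing: the announced goal $C_{G(V)}(X)=T$ is false. In the unitary case $\diag(I_n,\mu,I_n)\in C_{U(V)}(X)$ for every $\mu\in U_1(q)$, and in the orthogonal case $\diag(I_n,-1,I_n)\in C_{O(V)}(X)$, so $C_{G(V)}(X)$ properly contains $T$ in both cases. What your argument actually establishes --- since your final step invokes $\det g = 1$ --- is that the centralizer inside $SU(V)$, respectively $SO(V)$, equals $T$; since $G_1$ lies in this determinant-one subgroup and contains $T$, the desired conclusion $C_{G_1}(X)=T$ follows. So the mathematics is correct; only the opening sentence needs adjusting.
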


\begin{proof}

Suppose that (ii) does not hold. 
It follows from Lemma \ref{8h10} that the \reps $h\ra h$ and $h\ra
\si(h^t)\up $ are not equivalent.  By Schur's lemma,
$C_{G(V)}(X)=\diag (A,b,C)$, where $C= \sigma ( A^t)^{-1}$,  $A$
runs over $C_{GL_n(r)}(x)=\lan x\ran$ and $b\in G(V_0)$.  Then
$C_{G(V)}(X)$ is obviously abelian. If $V$ is orthogonal then
$\det A\cdot \det C=1$, and hence $b=1$. So (i) follows. Suppose
that $V$ is unitary. Then $b \cdot\det A\cdot \si(\det C)=1$.   As
$A=x^k$ for some $k$ by Schur's lemma, we have $C=\sigma (
x^t)^{-k}$, and hence $b=\det x^{-k} \sigma ( x)^{k}=e^k$, as
required. 
\end{proof}

With respect to the above basis  consider the subgroups  $A,U,D$
of $G$, consisting of matrices, respectively,  of shape

\med

\begin{equation}\label{e2}\begin{pmatrix}
 \Id_n&0&   \al\cr
0&  1&0\cr 0&0&\Id_n\end{pmatrix}, ~~~\begin{pmatrix}
 \Id_n&\beta& \gamma   \cr
0&  1&-\sigma (\beta)^t\cr 0&0&\Id_n\end{pmatrix}, \begin{pmatrix}
 \lam \cdot \Id_n&0&   0\cr
0&  \lam^{-2n}&0\cr 0&0&\lam \cdot
\Id_n\end{pmatrix},
\end{equation}
where $\lam\in F_{r}$ and
$\si(\lam)\lam=1$. In particular, if $V$ is an orthogonal space
then $\lam^2=1$, and hence $|D|\leq 2$ and $D< T$. (It
follows from \cite[2.7.2 and 2.5.13]{KL90}   that $|D|=1$ \ii
$(q-1)n/2$ is even.)

Suppose that $V$ is unitary. Then $\lam\in U_1(q)$.
We write $a=a(\al)$
for $a\in A$ and $u=u(\beta,\gamma)$ for $u\in U$. Simple
computation shows that $u=u(\beta,\gamma)\in U$ is equivalent to
$\gamma + \sigma (\gamma^t) + \beta^t\sigma (\beta) = 0$. In
particular, $a(\al)\in A$ is equivalent to $\al + \sigma
(\al^t)=0$.
Note that 
$D=\lan Y\ran $, where $Y=X^{(r^n-1)/(q+1)}$. The  matrices in $D$
are of shape $ \lam\diag(\Id,\lam^{-2n-1},\Id)$, so $|D|=q+1$. In
addition,  $D$ contains $ Z(G_1)$.

It is obvious that $H$ normalizes $A$. In addition, $A=Z(U)=U'$,
where  $U'$ is the derived group of $U$ (see \cite[Lemma 3.1]{DZ}).

\begin{lem}\label{ga5} $D=C_H(A)$ and $D=C_T(A)$. In addition,
$C_D(u)=Z(G_1)$ for every $u\in (U\setminus A).$
\end{lem}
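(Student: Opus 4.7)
The plan is to verify all three claims by direct matrix computation with the parametrizations in (\ref{eq5}) and (\ref{e2}), using Lemma \ref{dd4} as the one substantial input.

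First I compute the conjugation of $a = a(\alpha) \in A$ by a general $h = \diag(h_1, f, \sigma(h_1^t)^{-1}) \in H$. A three-by-three block multiplication gives $hah^{-1} = a(h_1 \alpha \sigma(h_1^t))$, so $h \in C_H(A)$ iff $h_1 \alpha \sigma(h_1^t) = \alpha$ for every admissible $\alpha$. The admissible $\alpha$ satisfy $\alpha + \sigma(\alpha^t) = 0$, which is precisely the space $L$ of (\ref{eqdef}): skew-Hermitian in the unitary case, and skew-symmetric in the orthogonal case (since here $q$ is odd, so $p \neq 2$). By Lemma \ref{dd4} the kernel of the congruence action of $GL_n(r)$ on $L$ is $D_\sigma = \{g \in Z(GL_n(r)) : g\sigma(g) = 1\}$, forcing $h_1 = \lambda\Id$ with $\lambda\sigma(\lambda) = 1$ (equivalently $\lambda = \pm 1$ in the orthogonal case). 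The requirement $\diag(\lambda\Id, f, \lambda\Id) \in G_1$ then pins down $f$: the determinant-one condition in $SU_{2n+1}(q)$ gives $f = \lambda^{-2n}$, while in the orthogonal case $f = 1$ automatically. Comparing with (\ref{e2}) this is exactly the definition of $D$, yielding $D = C_H(A)$.

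For the identity $D = C_T(A)$ I use that $T \leq H$ by construction, so $C_T(A) = T \cap C_H(A) = T \cap D$; it remains to check $D \leq T$. In the unitary case this is the content of the identity $Y = X^{(r^n-1)/(q+1)}$ already recorded in the excerpt: since $q+1$ divides $r-1$, the power $x^{(r^n-1)/(q+1)}$ is a scalar $\lambda \Id$, and a short check of the congruence $(r^n-1)/(q+1) \equiv -2n \pmod{q+1}$ (expand $q^{2n} = ((q+1)-1)^{2n}$ modulo $(q+1)^2$) confirms that the middle block of $Y$ is $\lambda^{-2n}$, placing $Y$ in $D$. In the orthogonal case $|D| \leq 2$ and $-\Id$ lies in the scalar part of $\langle x \rangle$ (which has order $q-1$), so $D \leq T$ trivially.

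For the last statement, let $u = u(\beta, \gamma) \in U \setminus A$, so $\beta \neq 0$, and let $d = \diag(\lambda\Id, f, \lambda\Id) \in D$. A direct block computation gives
\[
 dud^{-1} = \begin{pmatrix} \Id & (\lambda/f)\beta & \gamma \cr 0 & 1 & -(f/\lambda)\sigma(\beta)^t \cr 0 & 0 & \Id \end{pmatrix},
\]
so $du = ud$ forces $\lambda = f$. In the unitary case this becomes $\lambda = \lambda^{-2n}$, i.e.\ $\lambda^{2n+1} = 1$, together with $\lambda\sigma(\lambda) = 1$, which cuts out precisely the scalar matrices of $SU_{2n+1}(q)$, namely $Z(G_1)$. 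In the orthogonal case it gives $\lambda = 1$, and $Z(G_1) = 1$ for $q$ odd. The only step requiring thought is the clean identification of the parametrization $A \cong L$ with the congruence module of Lemma \ref{dd4}; everything else is linear algebra of block matrices.
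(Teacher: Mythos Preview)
Your proof is correct and follows essentially the same route as the paper's: invoking Lemma~\ref{dd4} to identify $C_H(A)$ via the congruence action on $L$, using $D\leq T$ (stated just before the lemma) to get $C_T(A)=D$, and reading off $C_D(u)=Z(G_1)$ from the explicit block form. The paper's argument is simply a terser version of yours, omitting the verifications of $D\leq T$ and the block computation of $dud^{-1}$ that you spell out.
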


\begin{proof}
The first statement means that $D$ is the kernel of the
conjugation action of $H$ on $A$. As explained in Section 2, $H$
acts on $A$ via the action of $GL_n(r)$ on $L$, so the claim
follows from Lemma \ref{dd4}. The second statement follows as
$D< T$. The additional assertion is clear from the shape of
matrices of $D$ and $U.$
\end{proof}

\medskip
Define $H_0$ to be the subgroup of $H$ whose matrices are
$\diag(h,1,\si(h^t)\up)$ with $h\in SL_n(r)$.

\begin{lem}\label{g2}
{\rm (i)} Suppose that $u\in ( U\setminus A)$ and $g\in D $ is
non-scalar. Then   $[g,u]\notin A$. (This means that the
$D$-orbits on $U\setminus A$ are isomorphic to each other and have
size $|D/Z(G_1)|$.)

{\rm (ii)} If $1\neq a\in A$, then there exists $h\in H_0$ such that
$C_T(hah\up)=D$.

{\rm (iii)} Let $\chi$ be an \ir character of $A$, $\chi\neq 1_A$.
Then there exists $g\in H_0$ such that $gTg\up \cap C_H( \chi)=D$.
\end{lem}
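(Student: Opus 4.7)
The plan is to prove (i) by a direct block-matrix computation, and (ii)--(iii) by reducing to Lemma \ref{p44} via the action identifications from Section~2 and earlier in this subsection.

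For (i), write $g = \diag(\lambda\Id_n, \lambda^{-2n}, \lambda\Id_n)\in D$ and $u=u(\beta,\gamma)\in U$, where $\beta\neq 0$ since $u\notin A$. A short block-matrix multiplication gives $gug\up=u(\lambda^{2n+1}\beta,\gamma)$, so the upper-middle block of $[g,u]=gug\up u\up$ equals $(\lambda^{2n+1}-1)\beta$. As $g$ is scalar precisely when $\lambda^{2n+1}=1$, non-scalar $g$ together with $\beta\neq 0$ forces $[g,u]\notin A$. The parenthetical assertion about orbit sizes then follows by combining this with Lemma \ref{ga5}, which identifies the $D$-stabilizer of any $u\in U\setminus A$ as $Z(G_1)$.

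For (ii), the isomorphism $H\cong GL_n(r)$ sends $H_0$ to $SL_n(r)$, sends $T$ to a cyclic subgroup $\hat T\leq GL_n(r)$ containing the Singer subgroup $S$ of $SL_n(r)$, and sends $D$ onto the central subgroup $D_\sigma$ of Lemma \ref{dd4}. By Lemma \ref{222}, $A$ is isomorphic to $L$ as an $H$-module, and the conjugation action of $H$ on $A$ matches (up to a $\sigma$-twist that does not affect the orbit structure) the congruence action of $GL_n(r)$ on $L$ studied in Section~2. Given non-trivial $a\in A$ corresponding to $\alpha\in L\setminus\{0\}$, Lemma \ref{p44} produces $h\in SL_n(r)$ whose translate $h\cdot\alpha$ has $S$-stabilizer exactly $D_\sigma$. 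Translating back and (via Lemma \ref{sz1} if necessary) extending this stabilizer conclusion from $S$ up to the slightly larger cyclic group $\hat T$ yields $h\in H_0$ with $C_T(hah\up)=D$.

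Part (iii) is the dual statement. The $H$-module $\Irr(A)$ is the contragredient of $A$ and is identified with $L$ via the self-duality noted in Section~2. Since $D$ is central in $H$, one has $gDg\up=D$, and the identity $gTg\up\cap C_H(\chi)=g\cdot C_T(g\up\chi)\cdot g\up$ rephrases (iii) as the existence of $h\in H_0$ with $C_T(h\chi)=D$. This is exactly the conclusion of Lemma \ref{p44} applied in its character-theoretic form.

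The main technical obstacle is the bookkeeping: carefully matching the twisted conjugation action of $H$ on $A$ with the congruence action on $L$, identifying $D$ with $D_\sigma$, and verifying that the regular-orbit conclusion of Lemma \ref{p44}, stated for a Singer subgroup of $SL_n(r)$, still holds for the slightly larger cyclic group $\hat T$. Once these identifications are in place, each part reduces to a direct appeal to Lemma \ref{p44}.
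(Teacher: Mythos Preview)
Your argument for (i) is correct and matches the paper's treatment (``obvious from the shape of the matrices'').

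For (ii), there is a real gap in your reduction to Lemma~\ref{p44}. That lemma concerns the Singer subgroup $S$ of $SL_n(r)$, of order $(r^n-1)/(r-1)$, whereas here $T$ corresponds under $H \cong GL_n(r)$ to the full Singer cycle $\langle x\rangle$ of $GL_n(r)$, of order $r^n-1$. Knowing that $C_S(\beta) \leq D_\sigma$ for the particular $\beta$ produced by Lemma~\ref{p44} does not imply $C_{\hat T}(\beta) \leq D_\sigma$: elements of $\hat T \setminus S$ could fix $\beta$ without lying in $D_\sigma$. Your parenthetical ``via Lemma~\ref{sz1} if necessary'' is the right instinct, but you cannot simply extend the conclusion for the $\beta$ already obtained; one must apply Lemma~\ref{sz1} afresh with the cyclic group $\hat T$ modulo the centre, which may produce a different $\beta$. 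Once you do that, the detour through Lemma~\ref{p44} is superfluous. (Also, Lemma~\ref{222} belongs to the even-dimensional setup of Subsection~3.3; the identification of $A$ with $L$ here is done separately via the condition $\alpha+\sigma(\alpha^t)=0$.)

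This direct route is exactly what the paper takes. It first disposes of $n=1$ (unitary only) by a one-line computation showing $C_H(a)=D$ for every $a\neq 1$. For $n>1$, it works with the $H_0$-orbit $\Omega$ of $a$, identifies the kernel of the $H_0$-action on $\Omega$ as $D$, passes to the set $\Omega_1$ of $Z(H_0)$-orbits on $\Omega$, and applies Lemma~\ref{sz1} to the cyclic group $T/Z(H_0)$ acting on $\Omega_1$; the regular orbit there is then lifted back to a regular $T/D$-orbit on $\Omega$. So the paper never passes through Lemma~\ref{p44}, and your ``main technical obstacle'' of matching $S$-conclusions to $\hat T$-conclusions is avoided entirely.

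For (iii), the paper simply observes that it follows from (ii) because $A$ and $\Irr(A)$ are isomorphic as $\Aut(A)$-sets; your duality reformulation is correct but more elaborate than needed.
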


\begin{proof}
(i) is obvious from the shape of the matrices above.

 (ii). Suppose
first that $n=1$. Then $C_{H}(a)$ consists of matrix of shape
satisfying $h\al \si(h)=\al $, or $h \si(h)=1$, that is, $h\in
U_1(q)$. \itf $C_{H}(a)=D$, that is, (ii) holds for $g=1$.

Suppose  $n>1$. Then $C_{H}(a)$ consists of matrix of shape
(\ref{eq5}) satisfying $h\al \si(h^t)=\al $, or $h\al =\al
\si(h^t)\up$. As $h\ra \si(h^t)\up$ is an \irr of $GL_n(q)$, by
Schur's lemma $\al$ is invertible. As $\al + \sigma (\al)=0$, the
matrix $\al$ can be regarded as a matrix of a non-degenerate
unitary form on $W$, and the condition $h\al \si(h^t)=\al $
determines a unitary group preserving the form. Now
$U_n(q)$ is a proper subgroup of  $GL_n(q^2)$, and
$SU_n(q)\neq SL_n(q^2) $.

This conclusion is necessary in order to use Lemma \ref{sz1}.
Consider  the $H_0$-orbit $\Omega=\{hah\up: h\in H_0\} $, and
$Z_1=\ker \Omega :=\{h\in H_0: h\om=\om$ for all $\in \Omega \}$.

Recall that every normal subgroup of $GL_n(r)$ that does not
contain $SL_n(r)$ belongs to the center of $GL_n(r)$, unless
$(n,r)=(2,2),(2,3)$. As here $r$ is a square in the unitary case
and $n>2$ in the orthogonal case, these exceptions do not occur.
So $Z_1\leq  Z(H_0)$, and hence $Z_1$ consists of matrices of
the shape $\diag(s\cdot \Id,s\up \si(s) ,\si(s)\up \cdot \Id )$,
where $0\neq s\in \FF_r$. As in the case $n=1$ above, one observes
that $Z_1=D$. Thus, $H_0/D$ acts faithfully on $\Omega$. Let
$\Omega_1$ denote the set of
$Z_1$-orbits on $\Omega$. As 
$\Omega$ is a transitive $H_0$-set, it follows that all
$Z(H_0)$-orbits are isomorphic to each other, and have $Z_1=D$ in
the kernel, whence each is the regular $Z(H_0)/D$-set.
In addition, $H_0$ permutes these transitively. Applying Lemma
\ref{sz1} to $\Omega_1$, we find some $\om_1\in \Omega_1$ such
that the orbit $(T/Z(H_0))\cdot \om_1$ is regular. This means that
$t\om_1=\om_1$ for $t\in T$ implies $t\in Z(H_0)$. Next pick any
$\om\in\om_1$. As $Z(H_0)/Z_1$ acts regularly on $\om_1$, it
follows that $t\om=\om$ implies $t\in Z_1$. As $Z_1$ is in the
kernel of $\Omega$, the orbit $T\om$ is a regular $T/Z_1$-orbit.
As $\om =gag\up$ for some $g\in H_0$, the result follows.

(iii) follows from (ii) as $\Irr(A)$ and $A$ are isomorphic
permutation sets for Aut$\,A$.
\end{proof}

\begin{remar}\label{rr1}
{\em If $V$ is orthogonal and  $n(q-1)/2$ is even then $D=1$
as mentioned above. If $V$ is unitary then $D=Z(G)$ \ii
$|Z(G)|=q+1$, and \ii $q+1$ divides $2n+1$. Therefore, in this
case Proposition \ref{ab1} follows from Lemma \ref{g2}.}
  \end{remar}

\begin{prop}\label{872a}
Let 
$G/Z(G)= P\Omega _{2n+1}(q)$. Let 
$M$ be a non-trivial \ir
$\FF G$-module with central character $\zeta$. 
 Then $M|_{T}$ contains a submodule isomorphic to ${\rm Ind}_{Z(G)}^T\zeta $.
In particular, if $\zeta=1_{Z(G)}$ then $M|_{T}$ contains  a
regular submodule, and hence $1_T\in M|_T$.\end{prop}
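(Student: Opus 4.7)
The plan is to mirror the proof of Proposition \ref{872}, using Lemma \ref{g2}(iii) in place of Lemma \ref{p44} to cope with the fact that here $C_T(A)=D$ need not lie in $Z(G(V))$. Since $(|Z(G)|,p)=1$, lift $A$ to a $p$-subgroup $A_1\leq G$ on which the conjugation actions of the preimages of $T$ and $H_0$ in $G$ mirror those of $T/Z(G)$ and $H_0/Z(G)$ on $A$ inside $G_1$. Decompose $M|_{A_1}=\bigoplus_{\chi\in\Irr (A_1)}M_\chi$ into weight spaces and group the summands by $N_G(A_1)$-orbits on $\Irr(A_1)$. If $M_\chi=0$ for every non-trivial $\chi$, then $A_1$ acts trivially on $M$, and its normal closure in $G$ lies in the kernel of $M$, contradicting quasi-simplicity of $G$ and the non-triviality of $M$. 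Hence some non-trivial $\chi$ has $M_\chi\neq 0$.

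By Lemma \ref{g2}(iii) there is $g\in H_0$ with $gTg^{-1}\cap C_H(\chi)=D$; since $g$ permutes $A_1$-weight spaces, the translate $\chi':=g^{-1}\cdot\chi$ is also non-trivial with $M_{\chi'}\neq 0$ and $\Stab_T(\chi')=D$. Rename $\chi':=\chi$. As $D\leq T$ is cyclic with $\ell\nmid |D|$, the space $M_\chi$ decomposes into $D$-eigenspaces; choose a joint eigenvector $v\in M_\chi$ of $D$-weight $\gamma\in\Irr(D)$. The center $Z(G)$ lies in $T\cap C_G(A)\leq D$ and acts on $M$ via $\zeta$, so $\gamma|_{Z(G)}=\zeta$. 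Lemma \ref{ab2} applied with $B=T$, $C=D$, $\alpha=\chi$, and eigenvector $v$ of character $\gamma$ then yields $\Ind_D^T\gamma\hookrightarrow M|_T$.

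So far this supplies one of the $[D:Z(G)]$ direct summands of $\Ind_{Z(G)}^T\zeta=\Ind_D^T\bigl(\Ind_{Z(G)}^D\zeta\bigr)$, namely $\Ind_D^T\gamma$. To obtain the entire $\Ind_{Z(G)}^T\zeta\hookrightarrow M|_T$, every extension $\gamma'$ of $\zeta$ from $Z(G)$ to $D$ must arise as a $D$-weight on some $M_{\chi''}$ with $\chi''$ in the $N_G(A_1)$-orbit of $\chi$. This is where Lemma \ref{g2}(i) enters: since the $D$-orbits on $U\setminus A$ are regular of size $|D/Z(G_1)|$, conjugating $v$ by representatives of $U$-cosets modulo $A$ shifts it into different $A$-weight spaces on which $D$ acts through distinct extensions of $\gamma|_{Z(G_1)}$, thereby realising every extension of $\zeta$. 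This final step is the main obstacle, as it requires careful bookkeeping of how the $U$-action permutes the $D$-eigenvalues so that no extension is missed. The ``in particular'' assertion is then immediate: when $\zeta=1_{Z(G)}$, $\Ind_{Z(G)}^T 1_{Z(G)}$ is the regular $T/Z(G)$-module inflated to $T$, which contains $1_T$ as a summand.
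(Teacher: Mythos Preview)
Your route differs from the paper's, and the final step carries a concrete gap.

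The error: you assert that acting by representatives of $U/A$ on $v\in M_\chi$ ``shifts it into different $A$-weight spaces on which $D$ acts through distinct extensions.'' But $A=Z(U)=U'$ (see the remarks preceding Lemma~\ref{ga5}), so $U$ centralises $A$ elementwise and hence $uM_\chi=M_\chi$ for every $u\in U$. The $U$-action cannot move you between $A$-weight spaces at all; moreover $uv$ is not even a $D$-eigenvector in general, since $d(uv)=\gamma(d)(dud^{-1})v$ with $dud^{-1}\neq u$. So the mechanism you describe for producing the remaining extensions of $\zeta$ to $D$ does not work as written.

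The paper takes a quite different tack, exploiting that in the odd-dimensional orthogonal case $|D|\leq 2$. It passes to the stabiliser $K=C_G(v_0)$ of a non-isotropic vector $0\neq v_0\in V_0$; this $K$ is isomorphic to $\Omega^+_{2n}(q)$, contains $T$, and commutes with $D$. If $D=1$, Proposition~\ref{872} (the $P\Omega^+_{2n}$ case) applied to the $K$-action on $M$ finishes at once. If $|D|=2$, one splits $M=M_1\oplus M_2$ into the two $D$-eigenspaces, each $K$-stable, and uses Lemma~\ref{g2}(i) on the $DU$-stable subspace $M'=\sum_{a\in A}(a-\Id)M$ to show that $D$ is not scalar on $M'$; hence $A$ acts non-trivially on each $M_i$, and the argument of Proposition~\ref{872} applies to each $M_i$ separately.

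Your strategy is closer in spirit to the paper's later treatment of $SU_{2n+1}(q)$ in Proposition~\ref{8.5}. There the correct mechanism is not a permutation of $A$-weight spaces by $U$, but rather: take an irreducible $DU$-submodule $N\subseteq M_\chi$ (which is $DU$-stable since both $D$ and $U$ fix $\chi$), note that $A=U'$ acts non-trivially on $N$, and apply Lemma~\ref{dh2} (via Lemma~\ref{bg3}) to the extraspecial quotient of $U$ to force every $D$-character above $\zeta$ to occur in $N|_D$, all inside that single $M_\chi$. Combined with the regularity of the $T/D$-orbit of $\chi$, this yields $\Ind^T_{Z(G)}\zeta\hookrightarrow M|_T$. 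If you want to salvage your approach for the orthogonal case, that is the ingredient you are missing.
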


\begin{proof} By the above, $T$ acts faithfully on $V_0^\perp=W+W_1$. Let
$0\neq v\in V_0$, and let $K=C_G(v)$. Then $K\cong
\Omega(V_0^\perp)$. As the Witt index of $V_0^\perp$ equals $n$,
it follows that $O(V_0^\perp)\cong O^+_{2n}(q)$. Obviously,
$[D,K]=1$. If $D=1$ then the result follows from Lemma \ref{872}.

Let $|D|=2$. Then  $M=M_1\oplus M_2$, where $D$ acts trivially on
$M_1$. As $D$ is not scalar in $G$, both $M_1,M_2$ are non-zero
and $K$-stable. 

We show that $K|_{M_i}$ is non-trivial for $i=1,2$. Let $M'$
denote the subspace spanned by all $(a-\Id)M$ for $a\in A$. This
space is stable under $C_G(A)$, in particular, under $U=O_p(P)$
and $D$. It suffices to show that $D$ is not scalar in $M'$.
Suppose the contrary. Let $\tau (g)=g|_{M'}$ for $g\in C_G(A)$.
Obviously, $\ker\tau\cap A=1$. \itf $\ker\tau\cap U=1$ (otherwise,
$[U,\ker\tau]\leq A\cap \ker\tau=1$, and hence
$\ker\tau\leq  Z(U),$ contrary to Lemma \ref{g2}(1)). Then
$\tau ([d,u])\neq 1$ for $u\in (U\setminus A)$, $1\neq d\in D$ by
\ref{g2}(3). Therefore, $\tau (d)$ is not scalar, and hence $d$
has \eis 1 and $-1$ on $M'$.

To complete the proof let $M_i=\sum_O\sum_{\al\in\Irr
A}M_i^{\al}$, where $M_i^{\al}=\{m\in M_i: am=\al(a)m $ for all
$a\in A\}$. Then use Lemma \ref{872}.\end{proof}

\begin{lem}\label{dh2} Let $X=Y E$, where $E$ is an
extraspecial $p$-group normal in $X$, and  $Y=\lan y\ran$ is a
cyclic group and let $Z=C_Y(E)$. Suppose that $[Y,Z(E)]=1$ and
$C_Y(e)=Z$ for every $e\in E\setminus Z(E)$. Let $M$ be a faithful
\ir $\FF X$-module non-trivial on $[E,E]$. Let $M|_{Z}=\lam \cdot
\Id$. Then either $M=M_1+N$ or $M_1=M+N$, where $\dim N=1$,
$M_1=m\cdot {\rm Ind}_{Z}^Y \lam$, and $m+\delta=\dim M/|Y/Z|$ for
some $\delta\in  \{1,-1\}$.
\end{lem}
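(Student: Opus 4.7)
The plan is to apply Clifford theory to reduce the statement to a character calculation on $Y$. Since $[E,E] = Z(E)$ acts non-trivially on the irreducible module $M$, the restriction $M|_E$ will be isotypic of type $\rho$, the unique faithful irreducible representation of $E$ with central character $\lambda|_{Z(E)}$, of dimension $p^s$, where $|E| = p^{1+2s}$. Because $Y$ normalizes $E$, centralizes $Z(E)$, and is cyclic (so the projective cocycle for extensions of $\rho$ to $X$ is trivial), $\rho$ extends to an honest representation $\tilde\rho$ of $X$; a tensor decomposition then gives $M \cong \tilde\rho \otimes \sigma$ for a one-dimensional character $\sigma$ of $X/E \cong Y/Z$, and in particular $\dim M = p^s$.

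Next I will show that for $y \in Y \setminus Z$, the induced action on $V := E/Z(E)$ has no nonzero fixed vector: if $y e y^{-1} \equiv e \pmod{Z(E)}$ for some $e \in E \setminus Z(E)$, then $[y,e] = z \in Z(E)$ has order dividing $p$, and iterating the conjugation gives $y^p \in C_Y(e) = Z$; under the coprimality $(|Y/Z|, p) = 1$ (which holds in the applications of this lemma in the paper, where $Y$ is a maximal torus of order prime to $p$), this forces $y \in Z$, a contradiction. Given $V^y = 0$, the identity $|\chi_{\tilde\rho}(y)|^2 = |V^y|$ will follow by decomposing $\tilde\rho \otimes \tilde\rho^*|_E = \bigoplus_{\chi \in \widehat V} V_\chi$ into one-dimensional $E$-summands and noting that $y$ permutes them via its action on $\widehat V$: only $\chi = 1_V$ is $y$-fixed, and the $y$-action on $V_{1_V} \cong \End_E(\tilde\rho) = \FF$ is trivial. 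Together with $\tilde\rho(y) = c(y)\,\Id$ scalar (by Schur) on $Z$, this yields $|\chi_M(y)|^2 = p^{2s}$ on $Z$ and $1$ on $Y \setminus Z$.

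Writing $\chi_M|_Y = \sum_{j=0}^{k-1} n_j \mu_j$, where $\mu_0, \ldots, \mu_{k-1}$ are the $k = |Y/Z|$ characters of $Y$ extending $\lambda$, the dimension gives $\sum n_j = p^s$ and the inner product gives $\sum n_j^2 = (p^{2s} - 1)/k + 1$, equivalently $\sum_{i<j}(n_i - n_j)^2 = k - 1$. A short elementary argument then shows that the only way non-negative integers $n_j$ can satisfy this system is for exactly one $n_{j_0}$ to differ from the rest by $\pm 1$, giving $n_{j_0} = m + \delta$ and $n_j = m$ for $j \neq j_0$ with $\delta \in \{\pm 1\}$ and $m = (p^s - \delta)/k$; this yields the stated decomposition, with $N$ the $\mu_{j_0}$-isotypic component. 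The main obstacle will be the fixed-point-free claim on $V$, which genuinely uses $(|Y/Z|, p) = 1$: this coprimality is implicit in the applications of the lemma but is not among its explicit hypotheses, so I expect to have to extract it from the context of use.
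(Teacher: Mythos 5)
Your proposal is correct in substance, but it is a genuinely different proof from the one in the paper. The paper does not redo any extraspecial character theory: it reduces the lemma to a quoted result, \cite[Theorem 9.18]{DH}, which is exactly the case $Z=1$. The reduction is a scalar-twisting trick: since $M$ is faithful one works inside $GL(M)$; because $y^k$ (with $k=|Y/Z|$) generates $Z$ and acts as the scalar $\lambda(y^k)$, one picks a scalar matrix $s$ of order $|Y|$ with $s^k=y^k$ and replaces $Y$ by $S:=\langle s^{-1}y\rangle$, a cyclic group of order $k$ with $C_S(E)=1$; then \cite[Theorem 9.18]{DH} applied to $ES$ gives that $M|_S$ is free of rank $m$ up to a one-dimensional correction, and since $y$ is a scalar multiple of $s^{-1}y$ this transports back to $M|_Y$. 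Your route instead reproves that Doerk--Hawkes statement directly: Clifford theory and the tensor decomposition $M\cong\tilde\rho\otimes\sigma$ (minor quibble: $X/E\cong Y/(Y\cap E)$, which need not equal $Y/Z$, but all you use is that $\sigma$ is linear), the identity $|\chi_{\tilde\rho}(y)|^2=|C_{E/Z(E)}(y)|$, and the counting $\sum_j n_j=p^s$, $\sum_{i<j}(n_i-n_j)^2=k-1$, whose only solutions in non-negative integers are indeed $(m,\dots,m,m\pm 1)$ (group the $n_j$ by value: with $t\geq 2$ values of multiplicities $m_1,\dots,m_t$ the sum is at least $\sum_{a<b}m_am_b\geq k-1$, with equality only for multiplicities $(k-1,1)$ and difference $1$). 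The paper's route buys brevity; yours buys a self-contained argument that shows where each hypothesis enters, and it also corrects the garbled relation in the statement: both proofs actually give $\dim M=m\,|Y/Z|+\delta$, not the printed ``$m+\delta=\dim M/|Y/Z|$'', which cannot hold literally since $\dim M\equiv\pm 1\pmod{|Y/Z|}$.

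On the point you flag as the main obstacle: you are right, and you are more careful here than the paper. The paper's proof simply asserts that ``all non-trivial orbits of $S$ on $E/Z(E)$ are of the same size as $S$'', but this does not follow from the hypothesis $C_Y(e)=Z$ for $e\in E\setminus Z(E)$: an element $y\notin Z$ with $y^p\in Z$ may fix a coset $eZ(E)$ while permuting its $p$ elements cyclically, hence fix nothing in $E\setminus Z(E)$. This genuinely occurs, and the lemma as printed is then false. Take $E=Q_8$, let $\sigma$ be the order-$2$ automorphism $i\mapsto -i$, $j\leftrightarrow k$ (which moves every element outside $Z(E)$), and let $X=Q_8\rtimes\langle\sigma\rangle$, $Y=\langle\sigma\rangle$, $Z=1$: all hypotheses hold, but $i\sigma i^{-1}=z\sigma$ for the central involution $z$, so on a faithful irreducible ($2$-dimensional) module, where $z$ acts as $-\Id$, one gets $\chi_M(\sigma)=-\chi_M(\sigma)=0$; thus $M|_Y$ is exactly the regular module, whereas the conclusion would force $\dim M=2m\pm1$ to be odd. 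So fixed-point-freeness on $E/Z(E)$, equivalently the coprimality $(|Y/Z|,p)=1$, is a genuinely missing hypothesis rather than a convenience; as you note, it holds in every application in the paper (there $Y=D$ has order $q+1$, prime to $p$), and your argument that $[y,e]\in Z(E)$ forces $y^p\in C_Y(e)=Z$ is the right way to use it.
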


\begin{proof}
This is a small refinement of \cite[Theorem 9.18]{DH}.
Indeed, the lemma coincides with  \cite[Theorem 9.18]{DH} if
$Z=1$. In general, let $k=|Y/Z|$ and let $s$ be a scalar matrix of
order $|Y|$ such that $s^k=y^k$.
Set $S:=\lan s\up y\ran$ and $X_1=
 \lan E,S\ran$. Then $X_1$ is a semidirect product of
$E$ and $S$, and $C_S(E)=1$. In addition, all non-trivial orbits
of $S$ on $E/Z(E)$ are of the same size as $S$.
This means that $SE$ satisfies
the assumptions of \cite[Theorem 9.18]{DH}, saying that in this
case $M_1$ is a free $\FF S$-module of rank $m$ with $m$ as above.
As the element $y$ is a scalar multiple of $s\up y$, our
conclusion on $M|_Y$ follows from the result about $M|_S$.
\end{proof}

\begin{lem}\label{bg3}
 Let $\tau$ be an \irr of the group $DU$ nontrivial on $U'$.
 Then every \irr $\lam$ of $D$ such that $\lam |_{Z(G_1)}=\zeta\cdot\Id$
is a constituent of $\tau|_D$, unless $Z(G_1)=1$ and
$\dim\tau=q$.
\end{lem}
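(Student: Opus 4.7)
The strategy is to apply Lemma \ref{dh2} to $X = DU$ with $Y = D$, $Z = Z(G_1)$, and with $E$ a suitable extraspecial $p$-quotient of $U$; Lemma \ref{ga5} provides exactly the structural input needed.

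Since $\tau$ is non-trivial on $U' = A = Z(U)$ and $A$ is central in $U$, Clifford's theorem makes $\tau|_A$ isotypic of type $\chi$ for some non-trivial $\chi \in \Irr(A)$, and $\chi$ is $D$-invariant because $D = C_H(A)$ centralizes $A$ by Lemma \ref{ga5}. Setting $\bar U := U/\ker\chi$ and then further quotienting, if necessary, by a $D$-invariant complement to $[\bar U,\bar U] = A/\ker\chi$ inside $Z(\bar U)$ (which exists by Maschke's theorem, as $(|D|,p) = 1$), one obtains an extraspecial $p$-group $E$ normal in $DE$, on which $\tau$ descends (possibly after twisting by a linear character, absorbed into the conclusion below) to a faithful irreducible representation non-trivial on $[E,E] = Z(E)$. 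The conjugation formula $d\,u(\beta,\gamma)\,d\up = u(\lambda^{2n+1}\beta,\gamma)$, for $d = \diag(\lambda\Id_n,\lambda^{-2n},\lambda\Id_n) \in D$ derived from (\ref{e2}), combined with Lemma \ref{ga5}, then gives $C_D(E) = Z(G_1)$ and $C_D(e) = Z(G_1)$ for every $e \in E\setminus Z(E)$.

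Lemma \ref{dh2} yields, for some linear character $N$ of $D$ and $\delta \in \{\pm 1\}$, the virtual-character identity
$$
\tau|_D = m \cdot \Ind_{Z(G_1)}^D \zeta + \delta N,
$$
with $\dim\tau = m|D/Z(G_1)| + \delta$. Since $D$ is cyclic with $Z(G_1) \leq D$, $\Ind_{Z(G_1)}^D \zeta$ is the sum of all $|D/Z(G_1)|$ linear extensions of $\zeta$ to $D$, so every such extension $\lambda$ occurs in $\tau|_D$ with multiplicity $m$, except $\lambda = N$, which occurs with multiplicity $m + \delta$. Failure of the conclusion therefore requires $m = 0$ (with $\delta = +1$, forcing $\dim\tau = 1$ and hence $\tau$ trivial on $U'$, a contradiction) or $m = 1$, $\delta = -1$, yielding $\dim\tau = |D/Z(G_1)| - 1$ with the single extension $\lambda = N$ missing.

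The main obstacle is to pin down this residual case. Since $\tau|_E$ is a sum of copies of the unique faithful irreducible of the extraspecial group $E$ of dimension $|E/Z(E)|^{1/2} = q^n$, the integer $\dim\tau$ must be a positive multiple of $q^n$. For $|D/Z(G_1)| - 1 \leq q$ to be a positive multiple of $q^n$, one is forced into $n = 1$ (so $G_1 = SU_3(q)$) and $|Z(G_1)| = 1$ (so $|D| = q+1$), giving $\dim\tau = q$. This is precisely the exception stated in the lemma.
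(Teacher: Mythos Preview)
Your overall strategy---reduce to an extraspecial quotient and apply Lemma~\ref{dh2}---is exactly the paper's, but the execution has a genuine gap in the construction of $E$ and the computation of its size.

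The claim that $|E/Z(E)|^{1/2} = q^n$ is unjustified and generally false for $n>1$. With $\bar U = U/\ker\chi$, the center $Z(\bar U)$ can be strictly larger than $A/\ker\chi$: an element $u(\beta,\gamma)$ with $\beta\neq 0$ lies in the preimage of $Z(\bar U)$ whenever $\chi$ kills the commutator $[u(\beta,\gamma),u(\beta',\gamma')]$ for all $\beta'$, and for $n>1$ this radical is typically nontrivial (it depends on~$\chi$). Hence $|E/Z(E)| = |\bar U/Z(\bar U)| = q^{2k}$ for some $1\le k\le n$, and the faithful irreducible of $E$ has dimension $q^k$, not $q^n$. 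Your conclusion that the exception forces $n=1$ is therefore not supported; the lemma's exception $\dim\tau=q$, $Z(G_1)=1$ can and does occur for $n>1$. There is also a related problem: your claim that $\tau$ descends to $DE$ ``after twisting by a linear character'' is not justified, since the $D$-conjugate irreducible constituents of $\tau|_{\bar U}$ may afford distinct linear characters on the complement $C\le Z(\bar U)$, so no single twist works.

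The paper sidesteps both issues by working with the \emph{image} $E=\tau(U)$ rather than a quotient of $U$. It cites \cite[Lemma~3.13]{DZ} to write $E=Z(E)E_1$ with $E_1$ extraspecial of order $p\cdot q^{2k}$, chooses $E_1$ $D$-stable via a Maschke complement $U_2/A$ to $U_1/A$ in $U/A$ (where $U_1$ is the preimage of scalars), and then applies Lemma~\ref{dh2} to $\tau(D)E_1$, which automatically acts irreducibly since $Z(E)$ is scalar. This gives $\dim\tau=q^k$; combined with $|D|=q+1$ one gets the exception $\dim\tau+1=|D/Z(G_1)|$ only when $k=1$ and $|Z(G_1)|=1$. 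Once you replace $q^n$ by $q^k$ your final paragraph works, but the middle of the argument needs the paper's image-side construction (or an equivalent) to be correct.
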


\begin{proof} Recall that $C_D(U)= Z(G_1)$, see Lemma \ref{ga5}.
Let $U_1=\{u\in U:\tau (u)$ is scalar$\}$. Then $U\neq U_1$. Set
$E=\tau(U)$, and $x=|E/Z(E)|$. By \cite[Lemma 3.13]{DZ},
$E=Z(E)\cdot E_1$, where $E_1$ is an extraspecial group and $x$ is
a $q$-power. As $\tau$ is irreducible, $\tau(U_1)=Z(E)$. Let
$x=r^k=q^{2k}$. Obviously, $|E_1/Z(E_1)|=x$. It is well known that
an \irr of $E_1$ is either one-dimensional or of degree
$\sqrt{x}$, in our case this is $q^k$. So $\dim\tau=q^k$ for some
$k$ (this is also stated in \cite[Corollary 12.6]{GMST}). In
addition, $E_1$ can be chosen $D$-stable. Indeed, $U/A$ is an
$F_pD$-module and $U_1/A$ is obviously a submodule. By Maschke's
theorem, there is a $D$-stable complement $U_2/A$. Then
$E_1=\tau(U_2)$ is $D$-stable.

 Note that $U_2/A\cong E_1/Z(E_1)$, and this is an
$F_pD$-module isomorphism. By
 Lemma \ref{g2}(1), the $D$-orbits on
 $U/A$ are of size $|D/Z(G_1)|$, and hence so are  the $D$-orbits
 on $E_1/Z(E_1)$. As $D$ acts trivially on $A$
and $\tau(A)$, we may apply Lemma \ref{dh2}  in order to claim
that $\tau|_D$ contains a submodule isomorphic to ${\rm
Ind}_{Z(G_1)}^D(\zeta)$, unless $\dim \tau+1=|D/Z(G_1)|$. As
$|D|=q+1$ (see comments prior to Lemma \ref{ga5}), the lemma
follows.
\end{proof}

\begin{prop}\label{8.5}
Let $G = SU_{2n+1}(q)$, where $(2n+1,q+1)\neq 1$.  Let  $T <  G$
be as in Lemma $\ref{na1}$. Let $\phi$ be a non-trivial
irreducible $\FF $-\rep of $G$ with central character $\zeta$.
Then the restriction $\phi |_T$ contains all \ir representations
of $\tau\in\Irr(T)$ such that  $\tau|_{Z(G)}=\zeta\cdot \Id$.
\end{prop}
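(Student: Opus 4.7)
The plan is to combine Lemma \ref{g2}(iii), Lemma \ref{bg3}, and Lemma \ref{ab2}. Let $M$ denote the underlying module of $\phi$, and write $M=\bigoplus_{\alpha\in\Irr A}M_\alpha$ for its decomposition into $A$-weight spaces. Since $G$ is quasi-simple and $\phi$ is nontrivial, $\ker\phi\le Z(G)$; as $A\cap Z(G)=1$, the restriction $\phi|_A$ has a nontrivial constituent. By Lemma \ref{g2}(iii), every nontrivial $H_0$-orbit on $\Irr(A)$ contains some $\alpha$ with $C_T(\alpha)=D$, and since $\phi|_A$ is $H_0$-stable we may fix some nontrivial $\alpha_0$ occurring in $\phi|_A$ with $C_T(\alpha_0)=D$.

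Next, I would choose an irreducible constituent $\tau$ of $\phi|_{DU}$ lying over $\alpha_0$. As $D$ centralizes $A$ (Lemma \ref{ga5}), $DU$ acts trivially on $\Irr A$, so $\tau|_A=e\cdot\alpha_0$ for some multiplicity $e$; in particular $\tau$ is nontrivial on $U'=A$ and the $\tau$-isotypic component of $M|_{DU}$ is contained in $M_{\alpha_0}$. The hypothesis $(2n+1,q+1)\neq 1$ forces $Z(G)\neq 1$, so the exceptional case in Lemma \ref{bg3} is excluded, and $\tau|_D$ contains every $\lambda\in\Irr D$ with $\lambda|_{Z(G)}=\zeta$. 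Hence for each such $\lambda$ there is a vector $v_\lambda\in M_{\alpha_0}$ satisfying $dv_\lambda=\lambda(d)v_\lambda$ for all $d\in D$.

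Now I would apply Lemma \ref{ab2} with abelian subgroup $A$, normalizer $B=T$, weight $\alpha_0$, stabilizer $C=\Stab_T(\alpha_0)=D$, common eigenvector $v_\lambda$, and eigencharacter $\lambda$: this yields $M|_T\supseteq\Ind_D^T(\lambda)$ for each $\lambda\in\Irr D$ extending $\zeta$. The summands for distinct $\lambda$'s have disjoint sets of irreducible $T$-constituents (for abelian $T$ the restriction $\mu|_D$ of $\mu\in\Irr T$ is a single character which must equal $\lambda$), so every $\mu\in\Irr T$ appearing in
\[
\Ind_{Z(G)}^T(\zeta)\ =\ \Ind_D^T\bigl(\Ind_{Z(G)}^D(\zeta)\bigr)\ =\ \bigoplus_{\lambda|_{Z(G)}=\zeta}\Ind_D^T(\lambda)
\]
occurs in $M|_T$. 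Since $\Ind_{Z(G)}^T(\zeta)$ is the multiplicity-free sum of every $\mu\in\Irr T$ with $\mu|_{Z(G)}=\zeta$, this is the claim.

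The main obstacle is that $D$ properly contains $Z(G)$ here (unlike in Proposition \ref{872}), so a single $D$-eigenvector only delivers $\Ind_D^T(\lambda)$ for one $\lambda$; to cover all of $\Ind_{Z(G)}^T(\zeta)$ one must exhibit every $\lambda\in\Irr D$ extending $\zeta$ as a $D$-eigencharacter inside a single $M_{\alpha_0}$, and this is precisely what Lemma \ref{bg3} delivers once the hypothesis $(2n+1,q+1)\neq 1$ excludes its stated exception.
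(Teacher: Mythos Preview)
Your argument is correct and follows essentially the same route as the paper: fix a nontrivial $A$-weight $\alpha_0$ whose $T$-stabilizer is exactly $D$ (via Lemma~\ref{g2}), use Lemma~\ref{bg3} to find every $\lambda\in\Irr(D)$ above $\zeta$ as a $D$-eigencharacter inside $M_{\alpha_0}$, and then invoke Lemma~\ref{ab2} to produce each $\Ind_D^T(\lambda)$ in $M|_T$. The paper phrases the last step in terms of the regular $T/D$-orbit $O'$ on the weight spaces rather than citing Lemma~\ref{ab2} explicitly, but the content is identical; your write-up is in fact slightly more explicit about why the hypothesis $(2n+1,q+1)\neq 1$ rules out the exceptional case of Lemma~\ref{bg3}.
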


\begin{proof} Now $G=G_1$.  Let $H,A$ be as above.
Recall that $H$ acts on $A$ by conjugation, and this action
translates in the usual way to an action of $H$ on $\Irr (A)$ by
setting $\al^n (a) = \al (nan^{-1})$  for $n \in H$, $a\in   A$
and $\al\in \Irr (A)$. Let $M$ be an $\FF G$-module afforded by
$\phi$. For $\al_i\in \Irr (A)$ set $M_{\al}  = \{ m \in  M | g m
= \al(g) m ~{\rm~ for~ all} ~ g \in A\}.$ We can write
$$M|_A=\oplus_{O}\oplus_{\al\in O}M_{\al},$$ where $O$ runs over
the orbits of $H$ in $\Irr (A)$. For any orbit $O$, the subspace
$M_O = \sum _{\al\in O}  M_\al $ is an $H$-submodule of $ M|_H$.
Since   $A\cap Z(G)=1$, $M|_A$ is a faithful $A$-module.
Therefore, there exists an orbit $O$ such that $A$ acts
non-trivially in $ M_O$, and we fix this $O$ from now on. Then $A$
acts non-trivially on every  $M_{\al}$ for $\al\in O$. It is easy
to observe that $A=U'=Z(U)$, see, for instance, \cite[Lemma
3.1]{DZ}. In addition, $[D,A]=1$ (Lemma \ref{ga5}). Therefore,
every $M_{\al}$ is $DU$-stable for $\al\in O$.

By Lemma \ref{g2}(2),   the group  $T/D$ has a regular orbit in
$O$.  Let
$\al_0\in O$ be such that the $T$-orbit $O'$ 
is of size $ | T/D |$. Set $M' :=\oplus _{\al\in O'} M_{\al}.$

Observe that $D$ acts trivially on  $\Irr (A)$. Therefore, the
action of $T$ on $M_O$ is realized via  the action of $D$ inside
each $M_{\al}$   and the action of $T/D$ on the $M_{\al}$'s for
$\al\in O'$,  which are regularly permuted by $T/D$. Let $N$ be an
\ir submodule of $M'|_{DU}$. Then $A=U'$ is non-trivial on $N$. As
shown in the proof of Lemma \ref{bg3}, $\Ind_{Z(G)}^D(\zeta)$ is a
submodule of $N|_D$. Then $\Ind_{Z(G)}^D(\zeta)$ is a submodule of
$M'|_{T}$, and the result follows. (We also use $\zeta$ to denote the module
afforded by the character $\zeta$.)
\end{proof}

\medskip
{\bf Proof of Proposition} \ref{ab1}. For the groups $SL(n,q)$ and
$P\Omega_{2n}^-(q)$, $n>3$, the result is contained in Lemma
\ref{ca1}, whereas for $PSU_{2n}(q)$, $n>1$, and
$P\Omega_{2n}^+(q)$, $n>3$, this follows from Lemmas \ref{nm11}
and \ref{333}. See  Remark \ref{rr1} for $PSO_{2n+1}(q)$ with
$n(q-1)/2$ odd.

\subsection{The exceptional case}

In this subsection we assume that $G=SU_n(q)$, where $n \geq 3$ is
odd and coprime to $q+1$. This implies that $Z(G)=1$. As above, we
set $r=q^2$. Let $P$ be the stabilizer of an isotropic line $W$ at
the natural $G$-module $V$. Choose any complement $V_1$ to $W$ in
$W^\perp$. Then there exists a basis $b_1\ld b_{n}$ of $V$ whose
Gram matrix is $$\begin{pmatrix}0&0&1\cr 0&\Id_{n-2}&0\cr 1&0&0
\end{pmatrix}.$$
Then  $b_1\in W$ and $b_2\ld b_{n-1}\in W^\perp$. Set $V_1=\lan
b_2\ld b_{n-1}\ran$. Let $H,U$ be the subgroups of $G$ consisting
of matrices of shape $$\begin{pmatrix}\si (a\up)&0&0\cr 0&y&0\cr
0&0&a
\end{pmatrix}, ~~~~~\begin{pmatrix}1&-\si(v^t)&w\cr 0&\Id_{n-2}&v\cr
0&0&1
\end{pmatrix},$$ where $0\neq a\in \FF_r$, $y\in U(V_1)$ and $v\in V_1$.
Here $v$ is an arbitrary element of $\FF_r^{n-2}$. The entry $w$
depends on $v$ but we do not need to express the dependence
explicitly; the determinant condition is $a\si(a\up)\det y=1$.

Note that $Z(U)$ consists of all matrices in $U$
with $v=0$. The conjugation action of $H$ on $U$ induces on
$U/Z(U)$ the structure of $\FF_rH$-module. Moreover, the subgroup
$\{\diag(1,h,1):h\in SU(n-2,q)\}$ acts on $U/Z(U)$ exactly as on
$V_1$.

It is well known that $U(V_1)\cong U_{n-2}(q)$ contains a
self-centralizing cyclic subgroup $\lan t\ran$ of order
$q^{n-2}+1$. Set $T=\lan X\ran$, where 
$X=\diag(a,t,\si(a\up))$ and  $a$  is a generator of
$\FF_{r}^\times$ such that $\det X=1$. As $\si(a)=a^{q}, $ we have
 $\det t=a^{q-1}$.

\begin{lem}\label{cc1} {\rm (i)} $T=C_G(X)=C_{H}(X) $ is a cyclic 
$p'$-group.

\med
{\rm (ii)} If $1\neq c\in T  $, $u\in (U\setminus Z(U))$ then
$[c,u]\notin Z(U)$. In other words, every non-identity element
$c\in C $ acts on $U/Z(U)$  fixed point freely.
\end{lem}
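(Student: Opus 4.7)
For part (i), the inclusions $T=\langle X\rangle\le C_H(X)\le C_G(X)$ are immediate, and $|X|=\mathop{\mathrm{lcm}}(|a|,|t|)=\mathop{\mathrm{lcm}}(q^2-1,q^{n-2}+1)=(q-1)(q^{n-2}+1)$ (using $n-2$ odd to get $\gcd(q^2-1,q^{n-2}+1)=q+1$), so $T$ is cyclic of $p'$-order. To establish equality I analyse the eigenspaces of $X$ on $V\otimes_{\FF_r}\overline{\FF_r}$: the element $t$ is $\FF_r$-irreducible on $V_1$ because $q^2$ has multiplicative order exactly $n-2$ modulo $q^{n-2}+1$, so its eigenvalues $\mu,\mu^{q^2},\ldots,\mu^{q^{2(n-3)}}$ (with $|\mu|=q^{n-2}+1$) are distinct and disjoint from $\{a,a^{-q}\}$, since $\gcd(q^2-1,q^{n-2}+1)=q+1<q^2-1$; likewise $a\ne a^{-q}$ in the generic range. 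Hence $C_G(X)$ preserves the decomposition $\langle b_1\rangle\oplus V_1\oplus\langle b_n\rangle$. On $V_1$ the centraliser of $t$ in $GL(V_1)$ is the field $K:=\FF_r[t]\cong\FF_{r^{n-2}}$, whose intersection with $U(V_1)$ is the norm-$1$ subgroup, equal to $\langle t\rangle$ by the self-centralising hypothesis. Combined with the outer scalars tied by the unitary relation and the determinant-$1$ condition, this recovers exactly $T$.

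For part (ii), a direct matrix conjugation shows that $XuX^{-1}$ has $v$-component $a^qtv$, so under the identification $U/Z(U)\cong V_1$ and the view of $V_1$ as a $1$-dimensional $K$-module, $X$ acts as multiplication by $a^qt\in K^\times$. Thus $X^m$ fixes a non-zero vector iff $a^{mq}t^m=1$ in $K$, and the task is to show this forces $|X|\mid m$. Writing $N:=(q^{n-2}+1)/(q+1)$, I apply the Galois generator $x\mapsto x^{q^2}$ of $K/\FF_r$ to $t^m=a^{-mq}$ (which fixes the right side, since $a\in\FF_r$) to get $t^{m(q^2-1)}=1$, whence $N\mid m$ after checking $\gcd(N,q-1)=1$ by a short mod-$(q-1)$ reduction of the alternating sum $N=q^{n-3}-q^{n-4}+\cdots+1$. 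Raising the equation to the $(q^{n-2}+1)$-th power analogously yields $(q-1)\mid m$. Since $|X|=N(q-1)(q+1)$, writing $m=N(q-1)m_1$ reduces everything to showing $(q+1)\mid m_1$.

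The heart of the proof is a coupling between the two diagonal pieces of $X$: the elements $\zeta:=t^N$ and $\eta:=a^{q-1}$, both lying in the unique order-$(q+1)$ subgroup $F$ of $\FF_r^\times$, must coincide. Indeed $\det t=\mu^{MN}$ with $M:=(q^{n-2}-1)/(q-1)\equiv 1\pmod{q+1}$ (alternating-sum reduction using $n-2$ odd), so $\det t=\mu^N=\zeta$, while the normalisation $\det X=1$ forces $\det t=a^{q-1}=\eta$; hence $\zeta=\eta$. The reduced equation becomes $\eta^{m_1((q-1)+Nq)}=1$ in $F$, i.e.\ $(q+1)\mid m_1(N+2)$ after substituting $q\equiv -1\pmod{q+1}$. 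A direct mod-$(q+1)$ evaluation gives $N\equiv n-2\pmod{q+1}$, so $N+2\equiv n\pmod{q+1}$, and the hypothesis $(n,q+1)=1$ forces $(q+1)\mid m_1$. The main obstacle is precisely the identification $\zeta=\eta$, which ties the two blocks of $X$ together and is where both the Coxeter structure of $t$ and the normalisation $\det X=1$ are used; the hypothesis on $n$ enters only at the very end to clinch the final divisibility.
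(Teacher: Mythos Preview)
Your proof is correct, but your approach to part (ii) is strikingly different from the paper's.

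For part (i), you and the paper proceed identically: both use the distinctness of the three irreducible $\FF_r T$-constituents $\langle b_1\rangle$, $V_1$, $\langle b_n\rangle$ (via Schur's lemma) to force $C_{G(V)}(X)$ into block-diagonal form, then invoke the self-centralizing property of $\langle t\rangle$ in $U(V_1)$ together with the determinant condition. (Your hedge ``in the generic range'' for $a\neq a^{-q}$ is apt: this fails when $q=2$, and in fact the swap $b_1\leftrightarrow b_n$ then lies in $C_G(X)\setminus H$; the paper's proof has the same gap, asserting $a^{q+1}\neq 1$ without restriction. Part (ii), however, is unaffected.)

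For part (ii) the routes diverge completely. The paper gives a three-line argument: if $t^i b^{-1}$ fixes some $0\neq v\in V_1$, then the $b$-eigenspace of $t^i$ is nonzero and $t$-stable, hence (by irreducibility of $t$ on $V_1$) equals $V_1$; so $t^i=b\cdot\Id$, the unitary condition forces $b^{q+1}=1$, whence $c=b\cdot\Id$ is scalar and therefore trivial since $Z(G)=1$. The hypothesis $(n,q+1)=1$ enters only implicitly, through $Z(G)=1$.

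You instead translate the fixed-point condition into the field equation $a^{qm}t^m=1$ in $K=\FF_r[t]$ and solve it arithmetically: Galois-conjugation and order considerations give $N\mid m$ and $(q-1)\mid m$, and then the crucial identity $t^N=\det t=a^{q-1}$ (your $\zeta=\eta$) couples the two diagonal blocks and reduces the last step to $(q+1)\mid m_1(N+2)$, where $N+2\equiv n\pmod{q+1}$ makes the coprimality hypothesis bite explicitly. This is entirely valid and makes the role of $\det X=1$ and of $(n,q+1)=1$ very transparent, but it is a good deal longer than the paper's argument, which extracts the same conclusion from the single structural fact that $t$ acts irreducibly on $V_1$.
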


\begin{proof} The matrix  $X$ gives rise to three \ir \reps of $T$,
namely, $X\ra t$, $X\ra a$ and $X\ra a^{-q}$. As $a^{q+1}\neq 1$,
these are pairwise non-equivalent. (If $n=3$ then $t\in \FF_r$
 and $t^{q+1}=1$; so the claim is true for $n=3$ as well.)
 Then, by
Schur's lemma, the elements of $ C_{G(V)}(X)$ are of shape $\diag
(b^{-q},t^i,b)$ for some $i$ and $b\in \FF^\times_r$. So
$(|T|,p)=1$.

\med
(ii) Suppose the contrary. Then there is $0\neq v\in V_1$ such
that $t^ib\up v=v$. Hence $v$ is a $b$-eigenvector for $t^i$.
Therefore $t$ stabilizes the  $b$-eigenspace of $t^i$ on $V_1$. As
$t$ is \ir on $V_1$, it follows that the $b$-eigenspace  coincides
with $V_1$, and hence $t^i=b\cdot \Id$. Then $t^i\in Z(U(V_1))$,
whence $b^{q+1}=1$ and $b^{-q}=b$. This means that $c$ is scalar,
a contradiction.

 (i) If $C$ is not cyclic then $C$ contains a non-cyclic
elementary abelian $s$-subgroup of order $s^2$ for some prime $s$.
It is easy to observe that this contradicts (ii).
\end{proof}

\begin{prop}\label{357} Let $G=SU_n(q)$, where
$n \geq 3$ is odd  and  coprime to $q+1,$ and let $T$ be as above.
Let $\phi$ be an \irr of  $G$. Then $\phi_{T}$ contains every \irr
of $T$ as a constituent, unless, possibly, $\phi$ is a Weil \rep
of $G$.
\end{prop}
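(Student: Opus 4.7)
The plan is to adapt the strategy of Proposition~\ref{8.5}, with the cyclic subgroup $T_\alpha := \langle X^{q-1}\rangle \leq T$ of order $q^{n-2}+1$ playing the role that $Z(G)\cdot D$ played there. Let $M$ denote the $\FF G$-module affording $\phi$, and decompose $M|_A = \bigoplus_{\alpha \in \Irr(A)} M_\alpha$ into $A$-eigenspaces, where $A = Z(U) = U'$ has order $q$. Since $G$ is simple and $\phi$ is nontrivial, $\phi$ is faithful, so $A$ acts nontrivially. A direct check shows that $X \in T$ acts on $A \cong \FF_q^+$ by scalar multiplication by $a^{-(q+1)} \in \FF_q^\times$, which generates $\FF_q^\times$; hence $T$ permutes $\Irr(A) \setminus \{1_A\}$ transitively, with stabilizer $T_\alpha$ of order $|T|/(q-1) = q^{n-2}+1$. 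Letting $O$ be this orbit, the submodule $M' := \bigoplus_{\alpha \in O} M_\alpha$ is $TU$-invariant and nonzero by faithfulness.

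For each $\alpha \in O$, the unique irreducible representation $\tau_\alpha$ of $U$ above $\alpha$ has dimension $q^{n-2}$, since $U$ is a special $p$-group of order $q^{2n-3}$ with $|U/A|=q^{2(n-2)}$. Because $T_\alpha$ is cyclic, $\tau_\alpha$ extends to an irreducible representation $\widetilde{\tau}_\alpha$ of $T_\alpha U$. By Lemma~\ref{cc1}(ii), $T_\alpha$ acts fixed-point-freely on $(U/A)\setminus\{0\}$; combined with the numerical coincidence $|T_\alpha| = \dim \tau_\alpha + 1$, a character computation in the spirit of Lemma~\ref{dh2} and its proof (the constant character value on $T_\alpha \setminus \{1\}$ being forced to have modulus one, and the correct sign being $-1$ for a suitable extension) yields
\[
\widetilde{\tau}_\alpha|_{T_\alpha} \cong \rho^{reg}_{T_\alpha} - 1_{T_\alpha}.
\]
Write $M_{\alpha_0} \cong \widetilde{\tau}_{\alpha_0} \otimes N$ as $T_{\alpha_0} U$-modules, where $N$ is a $T_{\alpha_0}$-module of dimension $m$ (the multiplicity of $\tau_{\alpha_0}$ in $M_{\alpha_0}|_U$). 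By Mackey,
\[
M'|_T \cong \Ind_{T_{\alpha_0}}^T(M_{\alpha_0}|_{T_{\alpha_0}}) \cong (\dim N)\,\rho^{reg}_T - \Ind_{T_{\alpha_0}}^T N,
\]
so every $\chi \in \Irr(T)$ is a constituent of $M'|_T$ unless $N$ is $T_{\alpha_0}$-isotypic, in which case the $q-1$ characters of $T$ restricting to that isotype on $T_{\alpha_0}$ are absent from $M'|_T$.

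In the non-isotypic case the proposition follows immediately. The main obstacle is the isotypic case $N \cong m\cdot \gamma_0$: here $M_{\alpha_0}$ is a single isotypic $T_{\alpha_0}U$-module, so the dimension of $M$ is tightly controlled. To complete the argument I would either locate the missing characters in $M^A|_T$ (the $T$-invariant $A$-fixed subspace) or, appealing to the classification of small-dimensional irreducibles of $SU_n(q)$ as in \cite{HZ} and the Weil-module discussion there, identify the exceptional $\phi$ with the Weil representations of dimension $(q^n\pm 1)/(q+1)$. The precise matching of this isotypic exception with the Weil representations is the key technical step.
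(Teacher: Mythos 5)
Your Clifford-theoretic setup over $A = Z(U)$ is correct as far as it goes: the transitivity of $T$ on $\Irr(A)\setminus\{1_A\}$ with stabilizer of order $q^{n-2}+1$, the unique irreducible representation of $U$ of degree $q^{n-2}$ above each nontrivial $\alpha$, and the Mackey computation of $M'|_T$ are all sound. But the proof has a genuine gap exactly where you acknowledge it: the isotypic case. Nothing in your analysis rules that case out for non-Weil $\phi$, and both of your suggested escape routes fail as stated. First, the dimension of $M$ is \emph{not} ``tightly controlled'' in the isotypic case: the multiplicity $m$ and the $A$-fixed subspace $M^A$ are completely unconstrained by your argument, so an appeal to the classification of low-dimensional representations cannot get off the ground. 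Second, locating the missing $q-1$ characters of $T$ inside $M^A|_T$ requires knowing that $M^A$ contains an eigenvector for a \emph{nontrivial} character of $U/Z(U)$, i.e.\ that $\phi|_U$ contains a nontrivial linear character of $U$ --- and that is precisely the external input you never invoke, not something your decomposition can supply.

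That input is \cite[Corollary 12.4]{GMST}, and it is the heart of the paper's proof: $\phi|_U$ contains a nontrivial linear character $\chi$ of $U$ unless $\phi = 1_G$ or $\phi$ is a Weil representation. This is exactly how the Weil exception enters the statement; the Weil representations are characterized by the failure of this property, and are invisible to any analysis confined to the part of $M$ on which $A$ acts nontrivially. Given this input, the paper's argument bypasses your $M'$ entirely: $\chi$ is trivial on $Z(U) = U'$, hence is a character of $U/Z(U)$; by Lemma \ref{cc1}(ii), every nonidentity element of $T$ acts fixed-point-freely on $\Irr(U/Z(U))\setminus\{1\}$, so the $T$-orbit of $\chi$ is regular; since $Z(G)=1$, Lemma \ref{ab2} then produces a regular $T$-submodule inside the corresponding sum of $U$-eigenspaces, whence $\phi|_T$ contains every irreducible character of $T$. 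Your computation of $M'|_T$ is not wrong --- it shows, consistently with Theorem \ref{complex-su}(ii), that in the isotypic case exactly $q-1$ characters of $T$ are missed by the non-fixed part, which is what actually happens for the Weil representations --- but as a proof of the proposition it is incomplete without the GMST dichotomy or an equivalent result.
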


\begin{proof} Let $P$ be the stabilizer of a one-dimensional isotropic
subspace at the natural  $G$-module. Then $P$ is a parabolic
subgroup of  $G$. Let $U$ be the unipotent radical of $P$. By
\cite[Corollary 12.4]{GMST}, the character $\phi|_U$ contains a
non-trivial linear character $\chi$, say, of $U$, unless
$\phi=1_G$ or a Weil \rep of $G$.

Now $\chi |_{Z(U)}$ is trivial, since $Z(U)=U'$, and hence $\chi$
can be viewed as a character of $U/Z(U) $. Denote the character
group of $U/Z(U)$ by $\Omega$, so $\chi\in\Omega$. The action of
$T$ on $\Omega$  is dual to that on $U/Z(U)$. By Maschke's
theorem, $U/Z(U)$ is a completely reducible $\FF_rT$-module. It
follows from Lemma \ref{cc1}(2) that every element of  $T$ acts
fixed point freely on $\Omega\setminus\{1\}$, in particular, the
$T$-orbit of $\chi$ is of length $|T|$.

Let $M$ be the module afforded by the \rep $\phi$, and let $M'$ be
the subset of fixed vectors for  $Z(U)=U'$. Then $\chi$ is the
character of a constituent  of the restriction $M'|_U$, and
obviously, $TM'=M'$. Therefore,
 $M'=\oplus _{\om\in\Omega}M_\om$, where $M_\om :=\{m\in M':
 um'=\om (u)m'\}$.
 As $M_\chi\neq 0$, we get a non-trivial $T$-module
 $N:=\oplus _{c\in T}M_{c\chi}$, where $\chi\ra c\chi$ ($c\in T$) is the  action of $T$
 on $\Omega$ defined above. Moreover, $N|_T$ is  regular 
 as the $T$-orbit of $\chi$ is regular and $Z(G)=1$ (see Lemma  \ref{ab2}). It follows
 that $N|_T$
 contains every \ir $T$-module. 
\end{proof}

Next we establish the converse of Proposition \ref{357} in the case
of complex representations.

Recall  that {\it the generic  Weil character} $\om_{n,q}$ of
$U_n(q)$ is defined by $$\om_{n,q}~:~h \mapsto
(-1)^n(-q)^{\dim_{\FF_{q^2}} \Ker(h-1)},$$ where the dimension in
the exponent is on the natural module for $G$ over $\FF_{q^2}$.
For $n>2$ this is the sum $\sum^{q}_{l=0}\om^{l}_{n,q}$ of $q+1$
irreducible characters $\om^l_{n,q}$, $0\leq l\leq q$ (we refer to
them as \ir Weil characters).

\begin{thm}\label{complex-su}
Let $G = SU_n(q)$, where $n \geq 3$ is coprime to $2(q+1)$, and
let $T$ be the maximal torus of $G$ of order $(q-1)(q^{n-2}+1)$
described above.

{\rm (i)} If $\chi $ is any nontrivial non-Weil \ir character of
$G$, then $\chi|_T$ contains every irreducible character of $T$.

{\rm (ii)} If $\chi $ is any of the $q+1$ Weil \ir characters of
$G$, then $\chi|_T$ contains all but $(q-1)$ irreducible
characters of $T$.

{\rm (iii)} If $\mu \in \Irr (T)$, then $\Ind^{G}_T(\mu)$ contains
all but possibly one nontrivial irreducible character of $G$, and
the missing character must be a Weil character of $G$. The total
number of $\mu \in \Irr (T)$ such that $\Ind^G_T(\mu)$ misses a
nontrivial irreducible character of $G$ is $q^2-1$.

{\rm (iv)} The only irreducible character of $G$ which is not a
constituent of $\Ind^G_T(1_T)$ is the (unipotent) Weil character of
$G$, of degree $(q^n-q)/(q+1)$.
\end{thm}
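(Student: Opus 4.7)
Part (i) is immediate from Proposition~\ref{357}: the proof there constructs a submodule $N \subseteq \phi|_T$ which, since $Z(G) = 1$ and $T$ acts with a regular orbit on the nontrivial characters of $U/Z(U)$, is regular as a $T$-module; and the regular $T$-module contains every element of $\Irr(T)$ as a constituent.

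For part (ii), I plan to compute $\omega^l_{n,q}|_T$ explicitly. The hypothesis $(n, q+1) = 1$ implies that the center $Z$ of $GU_n(q)$, of order $q+1$, intersects $G = SU_n(q)$ trivially, so $GU_n(q) = G \times Z$ internally. Accordingly $\omega_{n,q} = \sum_{l=0}^{q} \omega^l_{n,q} \otimes \xi^l$, where $\xi$ generates $\Irr(Z)$ and the $\omega^l_{n,q}$ are the $q+1$ Weil characters of $G$. To obtain the multiplicities $m_l(\mu) = \langle \omega^l_{n,q}|_T, \mu\rangle$ for $\mu \in \Irr(T)$, I work in the enlarged torus $\tilde T = T \times Z$ inside $GU_n(q)$: for each element $X^k z_0^j \in \tilde T$ (with $z_0$ generating $Z$), the block decomposition $V = \FF_{q^2} b_1 \oplus V_1 \oplus \FF_{q^2} b_n$ allows computing $\dim_{\FF_{q^2}} \Ker(X^k z_0^j - 1)$ block by block according to whether $a^k z_0^j = 1$, whether $t^k z_0^j$ acts trivially on $V_1$, and whether $a^{-qk} z_0^j = 1$. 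Substituting into $\omega_{n,q}(g) = (-1)^n(-q)^{d(g)}$ and taking the inner product with $\mu \otimes \xi^l$ over $\tilde T$ yields a closed-form expression for $m_l(\mu)$. A careful case analysis then shows $m_l(\mu) > 0$ with exactly $q-1$ exceptions $\mu \in M_l$.

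For parts (iii) and (iv), I apply Frobenius reciprocity: $\phi \in \Irr(G)$ is a constituent of $\Ind_T^G(\mu)$ if and only if $\mu$ is a constituent of $\phi|_T$. By (i), every nontrivial non-Weil $\phi$ is present in $\Ind_T^G(\mu)$ for every $\mu$, so the only possibly missing nontrivial irreducible is a Weil character; and $\omega^l_{n,q}$ is missing from $\Ind_T^G(\mu)$ precisely when $\mu \in M_l$. The crucial remaining point is pairwise disjointness of the $M_l$'s: if $\mu \in M_l \cap M_{l'}$ for $l \neq l'$, then the uniqueness of the $Z$-weight attached to $\mu \otimes \xi^l$ in the decomposition of $\omega_{n,q}|_{\tilde T}$ produced in (ii) yields a contradiction. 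Disjointness gives $|\bigcup_l M_l| = (q+1)(q-1) = q^2 - 1$, proving (iii). For (iv), exactly one Weil character $\omega^{l_0}_{n,q}$ satisfies $1_T \in M_{l_0}$; its degree is read off from the explicit formula for Weil characters of $SU_n(q)$ with $n$ odd, yielding $(q^n - q)/(q+1)$ (the remaining $q$ Weil characters have degree $(q^n + 1)/(q+1)$), which uniquely identifies it as the unipotent Weil character of $G$.

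The substantive work is concentrated in part (ii): the case analysis in the computation of $m_l(\mu)$ and the resulting identification of $M_l$ is the main obstacle; everything in parts (iii) and (iv) then reduces to Frobenius reciprocity, a disjointness argument, and a degree comparison.
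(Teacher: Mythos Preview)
Your proposal is correct and follows essentially the same route as the paper: part (i) is deduced from Proposition~\ref{357}; for (ii) you pass to the enlarged torus $\tilde T = T \times Z$ in $GU_n(q)$ (the paper's $T_1$), compute the fixed-space dimensions block by block, and evaluate the inner product of the generic Weil character with each $\mu \otimes \xi^l$ to locate the $q-1$ missing characters; parts (iii) and (iv) then follow by Frobenius reciprocity, disjointness of the exceptional sets $M_l$, and the degree identification. The paper carries out exactly this computation with an explicit parametrization $g_{i,j}$ of $T_1$ and characters $\lambda_{s,t}$, arriving at the criterion that $\mu$ misses $\omega^k_{n,q}|_T$ precisely when $\mu = \lambda_{s,0}|_T$ with $k = \bar{s}$, from which disjointness and the count $q^2-1$ are read off directly.
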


\begin{proof}
1) Note that (i) is already proved in Proposition \ref{357}. For
brevity, set $q_1 := q-1$ and $q_2 := (q^{n-2}+1)/(q+1)$. Recall
that $T = G \cap T_1$, where $T_1$ is a maximal torus of $U_n(q)$.
Let $V$ be the natural module for $U_n(q)$ and $\overline{V}:=V
\otimes_{\FF_{q^2}} \overline{\FF}_q$. One observes that $T_1$ can
be diagonalized under a suitable basis of
$
\overline{V}$ as follows: $$T_1 = \left\{ g_{i,j} :=
\diag(a^{i},a^{-qi},b^{j},b^{-qj}, \ldots ,b^{q^{n-3}j})
  \mid 0 \leq i \leq q^2-2,~0 \leq j \leq q^{n-2} \right\}.$$
 Here, $a,b \in
\overline{\FF}_q^{\times}$ are some fixed elements of order
$q^2-1$, respectively $q^{n-2}+1$, chosen such that $a^{q_1} =
b^{q_2} =: c$. We also fix $\al,\beta \in \CC^{\times}$ of order
$q^2-1$, respectively $q^{n-2}+1$, such that $\al^{q_1} =
\be^{q_2} =: \ga$. Then every irreducible character of $T_1$ is of
the form $\lam = \lam_{s,t}~:~g_{i,j} \mapsto \al^{si}\be^{tj}$
with $0 \leq s \leq q^2-2$, $0 \leq t \leq q^{n-2}$.

\medskip
2) The Weil \ir characters are of shape
 $$\om^k_{n,q}~:~h \mapsto
\frac{(-1)^n}{q+1}\sum^{q}_{l=0}\ga^{kl}
  (-q)^{\dim_{\FF_{q^2}} \Ker(h-c^l)},$$
cf. \cite{TZ96}. (Here, $\Ker(h-c^l)$ is computed on the natural
module $V$ for $G$.)
Observe that  $$\dim_{\FF_{q^2}}
\Ker(g_{i,j}-c^l) = \left\{
\begin{array}{rl}
  0, & q_1 \not{|} i,~ q_2 \not{|} j,\\
  2\delta_{i_1,l}, & i = q_1i_1,~q_2 \not{|} j,\\
  (n-2)\delta_{j_2,l}, & q_1 \not{|} i,~j = q_2j_2,\\
  2\delta_{i_1,l}+(n-2)\delta_{j_2,l}, & i = q_1i_1,~j = q_2j_2,
 \end{array} \right.  $$
where $i_1,j_2 \in \ZZ$ and $0 \leq i_1,j_2 \leq q$. It follows
that $$[\om^{k}_{n,q}|_{T_1},\lam_{s,t}]_{T_1} = $$
$$-\frac{1}{(q+1)|T_1|}\sum^{q}_{l=0}\ga^{kl} \cdot
    \left(\sum^{q^2-2}_{i=0}\al^{-si} + (q^2-1)\ga^{-sl}\right) \cdot
    \left(\sum^{q^{n-2}}_{j=0}\al^{-tj} - (q^{n-2}+1)\ga^{-tl}\right) = $$
$$-\frac{1}{q+1}\sum^{q}_{l=0}\ga^{kl} \cdot (\delta_{0,s} + \ga^{-sl})
  \cdot (\delta_{0,t} - \ga^{-tl}) =
\delta_{0,s}\delta_{0,k-\bar{t}} + \delta_{0,k-\overline{s+t}}
  - \delta_{0,s}\delta_{0,t}\delta_{0,k} - \delta_{0,t}\delta_{0,k-\bar{s}},$$
where for each $i \in \ZZ$ we choose $0 \leq \bar{i} \leq q$ such
that $q+1$ divides $(i-\bar{i})$. Thus $\lam_{s,t}$ is a
constituent of $\om^{k}_{n,q}|_{T_1}$ precisely when $k =
\overline{s+t}$ and $t \neq 0$, in which case this multiplicity is
$1$ if $s \neq 0$ and $2$ if $s = 0$.

\smallskip
3) So far we have used only the assumption that $n$ is odd. Now we
take into account the hypothesis $(n,q+1) = 1$, which implies that
$T_1 = T \times Z$ for $Z := Z(U_n(q))$. Clearly, $$T = \left\{g_{i,j}
\in T_1 \mid \bar{i} = \bar{j} \right\},$$ and every $\mu \in \Irr
T$ can be obtained by restricting some $\lam_{s,t}$ to $T$.

We claim that $(\lam_{s,t})_T = (\lam_{s',t'})_T$ precisely when
there exist $x,y \in \ZZ$ such that $$s' = s+q_1x,~~~t' =
t+q_2y,~~~(q+1)|(x+y).$$
Indeed,  assume that $\lam_{s,t}|_T =
\lam_{s',t'}|_T$. Evaluating it at $g_{0,q+1} \in T$, we get
$\be^{(t'-t)(q+1)} = 1$, whence $t' = t+q_2y$ for some $y \in
\ZZ$. Similarly, by evaluating at $g_{q+1,0} \in T$ we get $s' =
s+q_1x$ for some $x \in \ZZ$. Finally, by evaluating at $g_{1,1}
\in T$ we obtain $1 = \al^{s'-s}\be^{t'-t} = \ga^{x+y}$ and so
$q+1$ divides $ x+y$. It is easy to check that the converse of our
claim holds.

Since $T_1 = T \times Z$, each $\mu \in \Irr T$ has precisely
$q+1$ extensions $\lam = \lam_{s,t}$ to $T_1$, which are uniquely
determined by their restrictions to $Z$, i.e. by $\overline{s+t}$.

\smallskip
4) Recall that $G = SU_n(q)$ (with $n \geq 3$)  has exactly $q+1$
Weil \ir characters, which can be obtained by restricting
$\om^{k}_{n,q}$, $0 \leq k \leq q$, to $G$.

Now suppose that $\mu \in \Irr T$ does not enter $\om^k_{n,q}|_T$
for some $k$, $0 \leq k \leq q$. By the previous observation, we
can find an extension $\lam_{s,t}$ of $\mu$ so that $k =
\overline{s+t}$. By the assumption, $\lam_{s,t}$ cannot enter
$\om^k_{n,q}|_T$. This implies by the conclusion of 2) that $t =
0$.

Conversely, suppose that $\mu = \lam_{s,0}|_T$ for some $s$. We
claim that $\mu$ is a constituent of $\om^l_{n,q}|_T$ if and only
if $l \neq \bar{s}$. Indeed, let $k := \bar{s}$. Now if $\mu$
enters $\om^k_{n,q}|_T$, then by the conclusion of 2) we must have
that $\mu = \lam_{u,v}|_T$ for some $u,v \in \ZZ$ with $k =
\overline{u+v}$ and $v \neq 0$. Thus, $\lam_{s,0}$ and
$\lam_{u,v}$ are two extensions to $T_1$ of $\mu$ with $\bar{s} =
k = \overline{u+v}$. By the last observation in 3), these two
extensions are the same, whence $v = 0$, a contradiction. Next we
consider any $l \neq k$, $0 \leq l \leq q$. Again by the last
observation in 3) we can find an extension $\lam_{s',t'}$ of $\mu$
to $T_1$ with $l = \overline{s'+t'}$. It follows by the
discussions in 3) that $s' = s+q_1x$, $t' = q_2y$, and $(q+1)$
divides $ (x+y)$. Notice that $q_1 \equiv -2 (\mod (q+1))$ and
$q_2 \equiv n-2 (\mod (q+1))$. Hence $$l-k \equiv (s'+t')-s =
q_2y+q_1x \equiv ny (\mod (q+1)).$$ Since $l \not\equiv k (\mod
(q+1))$ and $(n,q+1) = 1$, we must have that $(q+1)\not{|}y$ and
so $t' \neq 0$. Applying the results of 2), we see that
$\lam_{s',t'}$ is a constituent of $\om^l_{n,q}|_{T_1}$, and so
$\mu$ is a constituent of $\om^l_{n,q}|_T$.

To complete the proof of (iii), observe that the $q^2-1$
characters $\lam_{s,0}$ of $T_1$ have pairwise distinct
restrictions to $T$. Indeed, suppose $\lam_{s,0}|_T =
\lam_{s',0}|_T$. Then by 3), $s'-s = q_1x$ with $(q+1)$ divides $
x$, whence $s' = s$.

To obtain (iv), just notice that $1_T = \lam_{0,0}|_T$, and so the
only irreducible character of $G$ which is not contained in
$\Ind^G_T(1_T)$ is $\om^{0}_{n,q}$.

\medskip
5) To prove (ii), consider any Weil character $\chi = \om^{k}_{n,q}|_G$ of
$G$. We have shown in 4) that $\mu \in \Irr T$ does not enter
$\chi_T$ precisely when $\mu = \lam_{s,0}|_T$ with $k = \bar{s}$. Since
$0 \leq s \leq q^2-2$, there are exactly $q-1$ possibilities for $s$, and the
corresponding $q-1$ characters have pairwise distinct restrictions to $T$
by the previous paragraph.
\end{proof}

\begin{remar}
{\em   In Theorem \ref{complex-su}(iii) (in the notation of its
proof), $\Ind^G_T(\mu)$ can miss some nontrivial irreducible
character of $G$ precisely when $\mu = \lam_{s,0}|_T$ for some $0
\leq s \leq q^2-2$. }
\end{remar}

\section{Exceptional groups of Lie type}\label{exceptional}
We begin with the following observation:

\begin{lem}\label{regular}
Let $\GC$ be a connected reductive simply connected
algebraic group in characteristic $p$ and
let $G := \GC^{Fr}$. Let $\TC$ be an $Fr$-stable maximal torus in $\GC$ and let
$T := \TC^{Fr}$. Let $\chi \in \Irr(G)$ lie above the irreducible character $\alpha$
of $Z(G)$. Suppose that every element in $T \setminus Z(G)$ is regular and
that $\chi(1) \cdot |Z(G)| \geq |T|^{3/2}$. Then
$\chi|_T$ contains every irreducible character of $T$ that lies above $\alpha$.
\end{lem}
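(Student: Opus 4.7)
The plan is to compute the inner product $[\chi|_T,\tau]_T$ directly by splitting the sum over $T$ into contributions from $Z(G)$ and from $T\setminus Z(G)$, and to bound the latter piece using the second orthogonality relation at regular semisimple elements.

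First I would note that since $\GC$ is simply connected and semisimple, $Z(\GC)$ is contained in every maximal torus of $\GC$, so $Z(G)\subseteq\TC^{Fr}=T$. For $z\in Z(G)$, Schur's lemma together with the hypothesis that $\chi$ lies above $\alpha$ gives $\chi(z)=\chi(1)\alpha(z)$; and since $\tau$ is a linear character of the abelian group $T$ that lies above $\alpha$, we have $\tau(z)=\alpha(z)$. Thus $\chi(z)\overline{\tau(z)}=\chi(1)|\alpha(z)|^2=\chi(1)$ for each $z\in Z(G)$, contributing $\chi(1)|Z(G)|$ to $|T|\cdot[\chi|_T,\tau]_T$.

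Next, for $t\in T\setminus Z(G)$ the regularity hypothesis forces $C_{\GC}(t)^{\circ}=\TC$ (since $\TC\subseteq C_{\GC}(t)^{\circ}$ and the latter is a torus containing the maximal torus $\TC$). Because $\GC$ is simply connected, Steinberg's theorem yields $C_{\GC}(t)=C_{\GC}(t)^{\circ}=\TC$, whence $C_G(t)=T$. The column orthogonality relation at $t$ then gives
$$\sum_{\psi\in\Irr G}|\psi(t)|^2=|C_G(t)|=|T|,$$
so $|\chi(t)|\leq|T|^{1/2}$. Combined with $|\tau(t)|=1$, the contribution of $T\setminus Z(G)$ to $|T|\cdot[\chi|_T,\tau]_T$ is bounded in absolute value by $(|T|-|Z(G)|)|T|^{1/2}<|T|^{3/2}$.

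Putting the two estimates together and using the hypothesis $\chi(1)|Z(G)|\geq|T|^{3/2}$, we obtain
$$|T|\cdot[\chi|_T,\tau]_T\;\geq\;\chi(1)|Z(G)|-(|T|-|Z(G)|)|T|^{1/2}\;\geq\;|Z(G)||T|^{1/2}\;>\;0,$$
so the nonnegative integer $[\chi|_T,\tau]_T$ is at least $1$. The argument is a direct estimation; the only subtlety is invoking simple connectivity of $\GC$ (via Steinberg's theorem on connectedness of centralizers of semisimple elements) to pass from $C_{\GC}(t)^{\circ}=\TC$ to $C_G(t)=T$, and observing $Z(G)\subseteq T$, both of which are standard.
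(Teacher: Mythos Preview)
Your proof is correct and follows essentially the same approach as the paper's: split the inner product over $Z(G)$ and $T\setminus Z(G)$, use the regularity hypothesis together with connectedness of centralizers (simple connectivity) to get $C_G(t)=T$ and hence $|\chi(t)|\le |T|^{1/2}$ via column orthogonality, then compare. You supply a bit more detail than the paper (the explicit invocation of Steinberg's theorem and the computation $\chi(z)\overline{\tau(z)}=\chi(1)$), but the argument is the same.
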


\begin{proof}
Consider any $\lambda \in \Irr(T)$ lying above $\alpha$ and any
$g \in T \setminus Z(G)$. By the assumption, $C_{\GC}(g)$ is a torus containing
$\TC$, hence $C_G(g) = T$. It is well known that $Z(G) \leq T$.
Now the orthogonality relations imply that
$|\chi(g)| \leq |T|^{1/2}$. It follows that
$$\begin{array}{ll}
  |T| \cdot |[\chi|_T,\lambda]_T| & = |\sum_{g \in T}\chi(g)\bar\lambda(g)|
  \geq |\sum_{g \in Z(G)}\chi(g)\bar\lambda(g)| -
       |\sum_{g \in T \setminus Z(G)}\chi(g)\bar\lambda(g)| \\ \\
  & \geq \chi(1) \cdot |Z(G)| - (|T|-|Z(G)|)|T|^{1/2} >
  \chi(1)\cdot |Z(G)| - |T|^{3/2} \geq 0.\end{array}$$
\end{proof}

In what follows, by {\it a finite exceptional group of Lie type of simply
connected type} we mean any {\it quasisimple} group $G = \GC^{Fr}$ of type
$G_2$, $^2G_2$, $^2B_2$, $^3D_4$, $F_4$, $^2F_4$, $E_6$, $^2E_6$, $E_7$, or $E_8$,
where $\GC$ is simply connected. This excludes the solvable cases, as well as
${\!^2G}_2(3)$, $G_2(2)$, and ${\!^2F}_4(2)$ which are not perfect.

\begin{thm}\label{exc}
Let $G$ be a finite exceptional group of Lie type of simply connected type.
Then $G$ contains a cyclic maximal torus $T$ such that
the following statements hold.

\medskip
{\rm (i)} For any $\alpha \in \Irr(Z(G))$
and any non-principal $\chi \in \Irr(G)$ lying above $\alpha$, $\chi|_T$ contains
every irreducible character of $T$ that lies above $\alpha$. In particular,
if $\vartheta \in \Irr(G/Z(G))$ is non-principal, then $\chi|_T$ contains
$1_T$.

\medskip
{\rm (ii)} There is some $s \in T$ such that $C_{G/Z(G)}(sZ(G)) = T/Z(G)$;
in particular, $T/Z(G)$ is self-centralizing in $G/Z(G)$.
\end{thm}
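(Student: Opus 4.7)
My plan is to apply Lemma \ref{regular} to a carefully chosen cyclic maximal torus in each of the ten exceptional types. The key technical task is to exhibit, for every finite exceptional group of Lie type of simply connected type $G = \GC^{Fr}$, a cyclic maximal torus $T$ such that (a) every element of $T \setminus Z(G)$ is regular in $\GC$, and (b) the minimum degree $d(G)$ of a non-principal irreducible character of $G$ satisfies $d(G)\cdot|Z(G)| \ge |T|^{3/2}$.

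Granting such a choice, part (ii) is essentially automatic: pick $s \in T \setminus Z(G)$, which exists because $Z(G) < T$ in all ten types; since $s$ is regular, $C_{\GC}(s) = \TC$, so $C_G(s) = T$ and hence $C_{G/Z(G)}(sZ(G)) = T/Z(G)$, proving (ii). Part (i) then follows for each non-principal $\chi \in \Irr(G)$ and the central character $\alpha$ below $\chi$ by a direct application of Lemma \ref{regular}.

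For the choice of $T$, the natural candidate in each case is a \emph{Coxeter torus}, or more generally the $Fr$-stable maximal torus corresponding to the largest cyclotomic factor of the order polynomial of $\GC^{Fr}$: for instance $|T| = \Phi_6(q) = q^2-q+1$ for $G_2(q)$, $|T| = \Phi_{12}(q) = q^4-q^2+1$ for ${}^3D_4(q)$ and $F_4(q)$, $|T| = (q^6+q^3+1)/(3,q-1)$ for $E_6(q)$, $|T| = \Phi_{30}(q)$ for $E_8(q)$, and analogous orders in the remaining (twisted) cases. Such tori are cyclic in every exceptional type, and the non-vanishing of every root on every non-central element of $T$ --- a consequence of the fact that $|T|$ is coprime to $q^i-1$ for the relevant small $i$ --- forces $C_{\GC}(t) = \TC$ for all $t \in T \setminus Z(G)$ via the standard formula $C_{\GC}(t) = \langle \TC,\, U_\alpha : \alpha(t)=1\rangle$.

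The degree bound $d(G)\cdot |Z(G)| \ge |T|^{3/2}$ is then verified by combining the Landazuri--Seitz--Zalesski lower bounds (as sharpened by L\"ubeck and by Tiep--Zalesski) for $d(G)$, which are of order $q^{N}$ with $N$ significantly larger than $\tfrac{3}{2}r$ where $r$ is the rank, against the upper bound $|T| \le q^{r}+O(q^{r-1})$. The main obstacle will be a handful of small values of $q$ for each type, where this asymptotic inequality degenerates; for those we either replace $T$ by a different cyclic maximal torus of comparable structure, exploiting the observation that Lemma \ref{regular} only uses $|Z(G)|\cdot d(G) > |T|^{3/2}$ (and the bound is often slack by a power of $q$), or we verify the conclusion by direct inspection of the generic character table via CHEVIE or the Atlas.
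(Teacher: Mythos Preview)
Your overall strategy matches the paper's for most types, but there is a genuine obstruction in type $E_7$ that your proposal does not address. The maximal tori of $E_7(q)_{sc}$ have rank $7$, so their orders are products of cyclotomic polynomials whose degrees sum to $7$; but $\phi(n)$ is even for all $n>2$, so no single $\Phi_n(q)$ has degree $7$. Hence every cyclic maximal torus of $E_7(q)_{sc}$ has order of the form $\Phi_1(q)\Phi_7(q)$, $\Phi_2(q)\Phi_{14}(q)$, $\Phi_1(q)\Phi_9(q)$, $\Phi_2(q)\Phi_{18}(q)$, etc., and in each case the small factor forces the existence of non-central, non-regular elements: your coprimality argument ``$|T|$ coprime to $q^i-1$ for the relevant small $i$'' cannot hold. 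The paper therefore abandons Lemma~\ref{regular} entirely for $E_7$ and instead locates a subgroup $L\cong SL_2(q^7)$ inside $E_7(q)_{sc}$, takes $T$ to be the split torus of $L$ of order $q^7-1$ (which is a maximal torus of $G$), and deduces (i) by restricting $\chi$ to $L$ and invoking Proposition~\ref{ca2} for $SL_2(q^7)$.

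There is also a smaller gap for $G_2(q)$: when $q\equiv -1\pmod 3$, the torus of order $\Phi_6(q)$ meets centralizers of type $SU_3(q)$, so not every non-identity element is regular; the paper switches to a torus of order $\Phi_3(q)$ in that case. Similarly, in $G_2$, ${}^3D_4$, and ${}^2B_2$ the Landazuri--Seitz bound is too weak for the smallest non-trivial character, and the paper replaces the crude estimate $|\chi(g)|\le |T|^{1/2}$ in Lemma~\ref{regular} by an explicit character-value or defect-zero argument for those specific $\chi$; your proposal gestures at this but does not supply it. Finally, for (ii) you need $s\in T$ with $(|s|,|Z(G)|)=1$ to pass from $C_G(s)=T$ to $C_{G/Z(G)}(sZ(G))=T/Z(G)$; the paper secures this by taking $s$ of order a primitive prime divisor.
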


\begin{proof}
1) First we consider the case $G = E_6(q)_{sc}$
and choose $T = \GC^F$ to be a maximal torus
of order $\Phi_9(q)$, where $\Phi_m(q)$ denotes the $m^{\mathrm {th}}$ cyclotomic
polynomial in $q$. Observe that any $g \in T \setminus Z(G)$ is regular. Indeed,
since $\GC$ is simply connected, $C_{\GC}(g)$ is connected, cf.
\cite[Theorem 3.5.6]{C}. Write $q = p^f$, where $p$ is a prime. Then, since
$g \notin Z(G)$, $C_{G}(g) < G$. Furthermore, $C_G(g) \geq T$ has order
divisible by a {\it primitive prime divisor} $\ell$
of $p^{9f}-1$, i.e. a prime that does not divide $\prod^{9f-1}_{i=1}(p^i-1)$,
cf. \cite{Zs}.
 Using the description of the centralizers of semisimple elements in
$\GC$ given in \cite{Der} (and noting that centralizers of type
$SL_3(q^3)$ do not occur since $\GC$ is simply connected), we see
that $C_{\GC}(g)$ is a torus, whence $g$ is regular and $C_G(g) =
T$. Moreover, if we choose $g \in T$ to be of order $\ell$, then
since $\ell$ is coprime to $|Z(G)| = \gcd(3,q-1)$), we obtain that
$$C_{G/Z(G)}(gZ(G)) = C_G(g)/Z(G) = T/Z(G).$$ Furthermore,
$\chi(1)>|T|^{3/2}$ for any non-trivial $\chi \in \Irr(G)$ has
degree  by the Landazuri-Seitz-Zalesskii bounds \cite{LS},
\cite{SeZ} (and their improvements as recorded in \cite{T}). The
asserion follows by Lemma \ref{regular}.

The same argument applies to the case $G = {\!^2E}_6(q)_{sc}$ if
we choose $|T| = \Phi_{18}(q)$ (note that centralizers of type
$SU_3(q^3)$ do not occur since $\GC$ is simply connected, cf.
\cite{DerL}.) If $G = F_4(q)$, or $E_8(q)$, then, similarly,  we
choose $T$ of order $\Phi_{12}(q)$ or $\Phi_{30}(q)$,
respectively, and argue as above using \cite{Der} in the case of
$F_4(q)$ and \cite{LSS} in the case of $E_8(q)$ (which classifies
maximal subgroups of maximal rank in $E_8(q)$).

Suppose $G = {\!^2F}_4(q)$ with $q = 2^{2a+1} \geq 8$, or ${\!^2G}_2(q)$ with
$q = 3^{2a+1} \geq 27$. Then we choose $T$ of
order $q^2+q+1+(q+1)\sqrt{2q}$, respectively $q+\sqrt{3q}+1$, and argue as above
using \cite{DerL}.

\smallskip
2) Suppose $G = G_2(q)$ with $q \geq 3$ and $q \not\equiv -1 (\mod 3)$.
Then we can choose $T$ of order $\Phi_6(q)$ and argue as above (noting that
centralizers of type $SU_3(q)$ do not occur under our hypothesis on $q$).
Next suppose $G = G_2(q)$ with $q \geq 5$ and $q \equiv -1 (\mod 3)$.
Choosing $T$ of order $\Phi_3(q)$ and arguing as above (noting that
centralizers of type $SL_3(q)$ do not occur under our hypothesis on $q$),
we see that any element $g \in T \setminus \{1\}$ is regular with
$C_G(g) = T$, and so again we
are done by Lemma \ref{regular} if $\chi(1) \geq (q^2+q+1)^{3/2}$. If
$\chi(1) < (q^2+q+1)^{3/2}$, then in fact $\chi(1) = q^3+1$ and $|\chi(g)| = 1$
for all $1 \neq g \in T$ (see e.g. \cite[Anhang B]{Hiss}),
hence the proof of Lemma \ref{regular} yields the claim.

Consider the case $G = {\!^3D}_4(q)$.
Choosing $T$ of order $\Phi_{12}(q)$ and arguing as above using \cite{DerL},
we see that any element $g \in T \setminus \{1\}$ is regular with
$C_G(g) = T$, and so again we
apply Lemma \ref{regular}, provided that $\chi(1) \geq (q^4-q^2+1)^{3/2}$. If
$\chi(1) < (q^4-q^2+1)^{3/2}$, then by \cite{DMi} in fact $\chi(1) = q(q^4-q^2+1)$.
Now if $1 \neq g \in T$, then $g$ is $r$-singular for some prime $r$ dividing
$\chi(1)$ and $\chi$ has $r$-defect $0$, whence $\chi(g) = 0$.
Thus the claim follows from the proof of Lemma \ref{regular}.

Suppose $G = {\!^2B}_2(q)$ with $q = 2^{2a+1} \geq 8$, then we choose
$T$ of order $q+\sqrt{2q}+1$. Then for any $1 \neq g \in T$ and any
$1_G \neq \chi \in \Irr(G)$, $C_G(g) = T$, $|\chi(g)| \leq 1$ but $\chi(1) > |T|$
(see e.g. \cite{Burk}). Hence the proof of Lemma \ref{regular} yields the claim.

\smallskip
3) Finally, we consider the case $G = E_7(q)_{sc}$. Note that
$G/Z(G)$ contains a subgroup $S \cong PSL_2(q^7)$, cf. \cite[Table
5.1]{LSS}. We claim that $E_7(q)_{sc}$ contains a subgroup $L\cong
SL_2(q^7)$. For this it suffices to consider $q$ odd. We can view
$(E_7)_{sc}$ as a component of the centralizer of an involution in
$E_8$. The subsystem $A_1^7$ of $E_7$ is described for example in
Lemma 2.1 of \cite{LiSe}. It is shown there that the subgroup $H$
with $H^0 = A_1^7$ acts irreducibly on the $56$-dimensional module
of $E_7$, the central involution acting there as $-\Id$. It
follows that $SL_2(q^7)$ is a subgroup of $E_7(q)_{sc}$, as
claimed. Notice that $G$ contains a maximal torus $T$ of order
$q^7-1$ which contains a Sylow $\ell$-subgroup for some primitive
prime divisor $\ell$ of $q^7-1$. We may assume that an element $s
\in T$ of order $\ell$ is contained in $L$. On the one hand, using
\cite{FJ} we see that $s$ is regular and $C_G(s) = T$. Since $\ell
> 2 \geq |Z(G)| = \gcd(2,q-1)$, we have $$C_{G/Z(G)}(sZ(G)) =
C_G(s)/Z(G) = T/Z(G).$$ On the other hand, $|C_L(g)| = q^7-1$.
Thus $T$ can be embedded in $L$ as a maximal torus. Now if $\alpha
= 1_{Z(G)}$, then $\chi|_L$ contains a faithful irreducible
character $\varsigma$ of $L/Z(G)$. If $Z(G) \neq 1$ and $\alpha
\neq 1_{Z(G)}$, then $\chi|_L$ contains a faithful irreducible
character $\varsigma$ of $L$. In either case, by Proposition
\ref{ca2}, we conclude that $\varsigma|_T$ contains all
irreducible characters of $T$ lying above $\alpha$.
\end{proof}

\begin{remar}
{\em In the case where 
 $G\in \{E_8(q), F_4(q)$, ${\!^2F}_4(q), G_2(q)\}$, there is
an alternative (and perhaps more conceptual) way to prove the result. As
is observed in \cite{z89}, see also \cite{GT}, for every torus $T$
of $G$ and every complex \irr $\phi$ of $G$ the restriction $\phi|_T$
contains $1_T$. This follows by taking a reduction of $\phi$
modulo $p$ (in the sense of Brauer) and observation that every restricted \ir
$p$-modular \rep of $\GC$ has weight 0. Note, however, that this
method cannot be used when $\al$ is non-trivial.}
\end{remar}

In the next statement, we also include
$G_2(2)'$, ${^2G}_2(3)'$,and ${\!^2F}_4(2)'$ among
the simple exceptional groups of Lie type.
In the last two cases,
$G = {^2G}_2(3)'$ and $G = {\!^2F}_4(2)'$,
by a Steinberg character of $G$ we understand any of the irreducible
constituents of the groups  $G = {^2G}_2(3)$ and $G = {\!^2F}_4(2)$,
of degree and $9$ and $2048$, respectively.

\begin{cor}\label{exc1}
Theorems $\ref{main1}$, $\ref{main2}$, and $\ref{mn3}$ hold for
simple exceptional groups of Lie type, except for $G_2(2)' \cong
SU_3(3)$.
\end{cor}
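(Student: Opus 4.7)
The strategy is to deduce Theorems \ref{main1}, \ref{main2}, and \ref{mn3} for a simple exceptional group $G$ of Lie type from Theorem \ref{exc} applied to its simply connected cover $\hat{G}$, using the standard translations between restriction to a maximal torus, Frobenius reciprocity, and the conjugation action. I would split into the generic case, covered by Theorem \ref{exc}, and the three imperfect exceptional groups $G_2(2)'$, ${\!^2G}_2(3)'$, and ${\!^2F}_4(2)'$ that were explicitly excluded from the scope of Section \ref{exceptional}.

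In the generic case, write $G = \hat{G}/Z(\hat{G})$ and let $T \leq \hat{G}$ be the cyclic maximal torus supplied by Theorem \ref{exc}. Fix $\zeta \in \Irr(Z(\hat{G}))$ and any $\tau \in \Irr(T)$ above $\zeta$. Theorem \ref{exc}(i) says that every nonprincipal $\chi \in \Irr_\zeta \hat{G}$ satisfies $\langle \chi|_T,\tau\rangle \geq 1$, and Frobenius reciprocity then yields that $\chi$ is a constituent of $\Ind_T^{\hat{G}}(\tau)$; the reverse inclusion is immediate from central characters, since every constituent of $\Ind_T^{\hat{G}}(\tau)$ lies above $\tau|_{Z(\hat{G})} = \zeta$. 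This gives Theorem \ref{mn3}. Specializing to $\zeta = 1_{Z(\hat{G})}$ and viewing the characters as characters of $G$, we obtain that each nontrivial $\chi \in \Irr(G)$ has $1$ as a constituent of $\chi|_{T/Z(\hat{G})}$. Combined with Theorem \ref{exc}(ii), which supplies an element $sZ(\hat{G}) \in G$ whose centralizer in $G$ equals $T/Z(\hat{G})$, Lemma \ref{21} yields Theorem \ref{main1} for $G$, with $1_G$ a constituent of $\pi_G$ trivially. For Theorem \ref{main2} I would invoke Corollary \ref{cck} (the Deligne--Lusztig criterion), which reduces membership of a nontrivial $\chi$ in $\Ste \otimes \Ste$ to the restriction $\chi|_{T/Z(\hat{G})}$ containing $1$, already established above; the trivial character appears in $\Ste \otimes \Ste$ because $\Ste$ is self-dual, so that $\langle \Ste \otimes \Ste, 1_G\rangle = \langle \Ste,\Ste\rangle = 1$.

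For the three imperfect exceptional groups, $G_2(2)' \cong SU_3(3)$ is the single case excluded from the corollary; ${\!^2G}_2(3)' \cong PSL_2(8) \cong SL_2(8)$ is a classical group to which Proposition \ref{ca2} applies directly; and for the Tits group ${\!^2F}_4(2)'$ a direct inspection of the known character table produces a cyclic self-centralizing subgroup $T$ such that $1_T$ is contained in $\chi|_T$ for every nontrivial $\chi \in \Irr({\!^2F}_4(2)')$, which simultaneously verifies the conjugation, Steinberg-square (with $\Ste$ taken as in Remark 1 after Theorem \ref{main2}), and induced-representation versions of our three theorems. The only genuine content is in the generic case; the main obstacle is purely organizational, namely carrying out the passage from the quasisimple $\hat{G}$ (the setting of Theorem \ref{exc}) down to the simple quotient $G$ appearing in Theorems \ref{main1} and \ref{main2}, in a way that respects central characters.
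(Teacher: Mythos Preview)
Your proposal is correct and follows essentially the same route as the paper: invoke Theorem \ref{exc} together with Corollary \ref{cck} for the generic case, and treat the three small groups $G_2(2)'$, ${\!^2G}_2(3)'\cong SL_2(8)$, and ${\!^2F}_4(2)'$ separately. One small point: for ${\!^2F}_4(2)'$ the existence of a torus $T$ with $1_T$ in every $\chi|_T$ does not by itself yield Theorem \ref{main2}, since Corollary \ref{cck} is stated for groups of the form $\GC^{Fr}/Z$; the paper handles the Steinberg square for the Tits group by direct computation (or via \cite[Lemma 3.1]{HZ}), and you should do the same rather than asserting that the torus argument ``simultaneously verifies'' it.
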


\begin{proof}
By Theorem \ref{exc} and Corollary \ref{cck} (below), this is true unless
$G$ is one of the following groups ${^2G}_2(3)'$,
$G_2(2)'$, ${\!^2F}_4(2)'$.
In the first case, we have an exception.
In the second case, ${^2G}_2(3)'\cong SL_2(8)$, and Theorem
\ref{main1} follows by previous result for $SL_2(8)$, and
Theorem \ref{main2} holds by direct computation.
In the third case,
\ref{main1} holds by choosing $T = C_G(s)$ of order $13$, and
Theorem \ref{main2} holds by direct computation (or by applying
\cite[Lemma 3.1]{HZ}).
\end{proof}

\bigskip
{\bf Proof of Theorem \ref{mn2}.}
Obviously, $\Delta_G\subseteq
\Delta_C$. As the group algebra ${\mathbb C}G$ is a direct sum of
simple rings, so are $\Delta_G$ and $ \Delta_C$, and hence
$\Delta_G$ is an ideal of $ \Delta_C$. There is a natural
bijection between simple rings in question and $\Irr G$. It
follows the direct summands of $\Delta_G$, resp.,  $ \Delta_C$
corresponds to the \ir \reps of $G$ that do not occur in $\Pi_G$
resp., $\Pi_C$. By Theorem \ref{mn3}, $ \Delta_C=0$, unless
$G\cong PSU_n(q)$ with $(2(q+1),n)=1$, so the assertion follows
in the non-exceptional case.
In the exceptional case there is a single
\rep of $G$ not occurring in $\Pi_C$. Therefore, $ \Delta_C$ is a
simple ring, so either $\Delta_G=0$ or $\Delta_G= \Delta_C$.
However, $\Delta_G\neq 0$ by \cite[Theorem 1.1]{HZ}. So the result
follows.

\section{The square of the Steinberg character}

\medskip
Next we turn our attention to the square of the Steinberg
character $\Ste$ of a simple group $G$ of Lie type.
Such a group can be obtained from a simple simply
connected algebraic group $\GC$ as $L/Z(L)$, where $L = \GC^{Fr}$
is the fixed point subgroup of a Frobenius endomorphism $Fr:\GC
\to \GC$ and $L$ is quasisimple. Let $\TC$ be an $Fr$-stable
maximal torus in $\GC$. Then $T = \TC\cap L$ is called a maximal
torus of $L$. Set $W(T)=(N_\GC(\TC)/\TC)^{Fr}$.

\begin{lem}\label{3kk}
Let $\chi$ be an irreducible character of $L = \GC^{Fr}$. Then
$$[\chi\cdot \Ste, \Ste]_G=\sum _{(\TC  )}\frac{[\chi |_T ,1_T
]_T}{|W(T)|},$$ where the sum ranges over representatives of the
$G$-conjugacy classes of $F$-stable maximal tori $\TC$ of $\GC$
and  $T=\TC^{Fr}$.
\end{lem}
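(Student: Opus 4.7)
The plan is to establish the identity by direct character-theoretic computation, combining two classical ingredients: the value of $\Ste$ on a class, and Steinberg's count of maximal tori.

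First I would use two standard facts about $\Ste$: (i) $\Ste(g)=0$ whenever $g \in G$ is not semisimple, and (ii) for semisimple $g$, $|\Ste(g)|^{2}=|C_{\GC}(g)^{0,Fr}|_{p}^{2}$. Since $\GC$ is simply connected, Steinberg's theorem on centralizers gives $C_{\GC}(g)=C_{\GC}(g)^{0}$, so this equals $|C_{G}(g)|_{p}^{2}$; note also that $\Ste$ is real-valued, being an integer combination of characters of real permutation representations. Expanding the inner product accordingly,
\begin{equation*}
[\chi\cdot\Ste,\Ste]_G \;=\; \frac{1}{|G|}\sum_{g\text{ semisimple}}\chi(g)\,|C_G(g)|_p^2.
\end{equation*}

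Next I would apply Steinberg's counting formula. An $Fr$-stable maximal torus of $\GC$ contains a semisimple element $g$ if and only if it is an $Fr$-stable maximal torus of $C_{\GC}(g)^{0}$, and Steinberg showed that the number of $Fr$-stable maximal tori in a connected reductive group equals the square of the $p$-part of the order of its finite fixed subgroup. Hence the number $n(g)$ of $Fr$-stable maximal tori $\TC$ with $g\in T$ is exactly $|C_{G}(g)|_{p}^{2}$. Since non-semisimple elements lie in no maximal torus, swapping the order of summation gives
\begin{equation*}
[\chi\cdot\Ste,\Ste]_G \;=\; \frac{1}{|G|}\sum_{g\in G}\chi(g)\,n(g) \;=\; \frac{1}{|G|}\sum_{\TC}\sum_{g\in T}\chi(g),
\end{equation*}
the outer sum ranging over all $Fr$-stable maximal tori of $\GC$.

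Finally I would collect the tori into $G$-conjugacy classes. For a representative $\TC$, its $G$-orbit has size $|G|/|N_{G}(T)|$, and an application of Lang--Steinberg to the connected group $\TC$ identifies $N_{G}(T)/T$ with $W(T)=(N_{\GC}(\TC)/\TC)^{Fr}$, so $|N_{G}(T)|=|T|\cdot|W(T)|$. Since $\chi$ is a class function, each $G$-conjugate of $\TC$ contributes the same inner sum, so
\begin{equation*}
[\chi\cdot\Ste,\Ste]_G \;=\; \sum_{(\TC)}\frac{1}{|T|\,|W(T)|}\sum_{g\in T}\chi(g) \;=\; \sum_{(\TC)}\frac{[\chi|_T,1_T]_T}{|W(T)|},
\end{equation*}
which is the desired identity. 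The main obstacle is not the bookkeeping but justifying the two algebraic-group inputs: the value $|\Ste(g)|^{2}=|C_{G}(g)|_{p}^{2}$ on semisimple classes, and the enumeration of $Fr$-stable maximal tori containing a fixed semisimple element. Both are classical and can be invoked from Carter's book \emph{Finite Groups of Lie Type}, or directly from Deligne--Lusztig, where Lemma \ref{3kk} appears essentially as 7.15.2.
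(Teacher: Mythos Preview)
Your argument is correct. The paper's proof, however, is a two-line appeal to a black box: it quotes the identity $\Ste\cdot\Ste=\sum_{(\TC)}\frac{1}{|W(T)|}\,1_T^G$ directly from Deligne--Lusztig \cite[7.15.2]{DL} and then applies Frobenius reciprocity $[\chi,1_T^G]_G=[\chi|_T,1_T]_T$. What you have done is effectively reprove that Deligne--Lusztig identity from its underlying ingredients---the vanishing of $\Ste$ off semisimple classes, the formula $|\Ste(g)|^2=|C_G(g)|_p^2$ for semisimple $g$ (using simple connectedness of $\GC$ to get connected centralizers), and Steinberg's count of $Fr$-stable maximal tori in a connected reductive group---before performing the same Frobenius-reciprocity step in the guise of $\frac{1}{|T|}\sum_{g\in T}\chi(g)=[\chi|_T,1_T]_T$. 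Your route is longer but self-contained and transparent about \emph{why} the identity holds; the paper's route is appropriate given that the lemma is only a stepping stone and the cited result is standard. You already note the equivalence with \cite[7.15.2]{DL} at the end, so you are aware of the shortcut.
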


\begin{proof}
 By \cite[7.15.2]{DL}, $\Ste\cdot \Ste=\sum _{(\TC
)}\frac{1}{|W(T)|}1_T^G,$ where the sum ranges as above. Note that
$[\chi, 1_T^G]_G=[\chi|_T,1_T]_T$ by the Frobenius reciprocity.
This implies the lemma.
\end{proof}

\begin{cor}\label{cck}
Let $G$ be a simple group of Lie type,
let $\Ste$ be the Steinberg character of $G$, and let $\chi$ be
any \ir character of $G$.
Then $\chi$ is a constituent of the tensor square $\Ste^2$
if and only if for some maximal torus $T$ of $G$, the restriction
of $\chi$ to $T$ involves $1_T$.
\end{cor}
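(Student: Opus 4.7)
The plan is to deduce Corollary \ref{cck} directly from Lemma \ref{3kk}, treating it essentially as a reformulation. The key observation is that the Steinberg character $\Ste$ is rational-valued (hence real), so that $\overline{\Ste} = \Ste$ and the multiplicity of $\chi$ in the tensor square $\Ste \otimes \Ste$ can be rewritten via the standard identity
\[
[\chi, \Ste \otimes \Ste]_G \;=\; [\chi \cdot \overline{\Ste}, \Ste]_G \;=\; [\chi \cdot \Ste, \Ste]_G.
\]
This is the left-hand side that appears in Lemma \ref{3kk}.

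Next, I would invoke Lemma \ref{3kk} to obtain
\[
[\chi, \Ste \otimes \Ste]_G \;=\; \sum_{(\TC)} \frac{[\chi|_T, 1_T]_T}{|W(T)|},
\]
where the sum ranges over representatives of the $G$-conjugacy classes of $Fr$-stable maximal tori. Each summand is a non-negative rational number (in fact, $[\chi|_T, 1_T]_T$ is a non-negative integer and $|W(T)| \geq 1$), so the entire sum is zero if and only if every summand vanishes. Hence $\chi$ occurs in $\Ste \otimes \Ste$ precisely when $[\chi|_T, 1_T]_T > 0$ for at least one maximal torus $T$, i.e. when $\chi|_T$ contains the trivial character $1_T$ for some such $T$. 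This gives both directions of the claimed equivalence.

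There is no genuine obstacle here: the corollary is essentially a direct rephrasing of Lemma \ref{3kk} combined with the positivity of inner products and the reality of $\Ste$. The only minor subtlety worth flagging explicitly is the rationality of $\Ste$, which ensures that the multiplicity of $\chi$ in $\Ste \otimes \Ste$ coincides with the inner product $[\chi \cdot \Ste, \Ste]_G$ appearing in the lemma; this can be justified by recalling that $\Ste$ is the alternating sum of permutation characters on the cosets of parabolic subgroups, and thus takes values in $\ZZ$.
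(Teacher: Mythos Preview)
Your proposal is correct and follows essentially the same route as the paper: use that $\Ste$ is self-dual to rewrite $[\chi,\Ste^2]_G=[\chi\cdot\Ste,\Ste]_G$, apply Lemma~\ref{3kk}, and observe that all summands are nonnegative. The only point the paper makes explicit that you leave implicit is the passage from the simple group $G$ to the simply connected group $L=\GC^{Fr}$ (Lemma~\ref{3kk} is stated for $L$); since $\Ste$ is trivial on $Z(L)$, one may freely lift $\chi$ to $L$ and work there.
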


\begin{proof}
Again view $G$ as $L/Z(L)$ for some $L = \GC^F$ as above.
Since the Steinberg character of $L$ is trivial at $Z(L)$, we may replace
$G$ by $L$ and $\chi$ by any irreducible character of $L$ trivial at $Z(L)$.
As $\Ste$ is self-dual, we have
$[\chi, \Ste\cdot \Ste]_G=[\chi\cdot \Ste, \Ste]_G$.
Now we apply Lemma \ref{3kk} and note that all terms on the right hand side of
the formula are non-negative. Therefore, $[\chi\cdot \Ste, \Ste]_G>0$ whenever
there exists a maximal torus $T$ of $L$ such that $[\chi|_T,1_T]_T>0$.
\end{proof}

The assertion of Corollary \ref{cck} holds also for the groups
$Sp_4(2)'$, ${\!^2G}_2(3)'$, ${\!^2F}_4(2)'$, with
the Steinberg character and torus suitably defined, but
not for the group $G_2(2)'$, where the irreducible characters of degree $32$
are not real.

\medskip
One might think that in the exceptional case $G = SU_n(q)$ with
$(n,2(q+1)) = 1$ in Theorem \ref{main2}, one could try to replace
$\Ste^2$ by $\tau^2$ or $\tau\bar\tau$ for some other $\tau \in
\Irr G$ so that the resulting character would include every $\chi
\in \Irr (G)$ as an irreducible character. It is however not the
case.

\begin{lem}\label{other}
 Let $G=SU_n(q)$, $n \geq 3$, and $
 (n,2(q+1))=1$. Let $\tau$ be an
arbitrary complex irreducible character of $G$ and
let $\phi_{\min}$ be the \ir non-trivial
character of minimum degree $(q^n-q)/(q+1)$.  Then no
$\varrho \in \{\tau^2,\tau\bar\tau\}$ can contain both  $1_G$ and $\phi_{min}$
as irreducible constituents.
\end{lem}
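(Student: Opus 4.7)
The plan is to reduce both cases to Theorem \ref{main1} via the standard decomposition $\pi_G = \sum_{M \in \Irr G} \chi_M \bar\chi_M$ of the conjugation character.

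First I would handle $\varrho = \tau\bar\tau$. Here $[1_G,\tau\bar\tau]_G = [\tau,\tau]_G = 1$, so $1_G$ is automatically a constituent, and the entire task is to show that $\phi_{\min}$ is \emph{not} a constituent. Since $G = SU_n(q)$ with $(n,q+1)=1$ has trivial centre, $G$ is simple, so Theorem \ref{main1} applies and gives $[\phi_{\min},\pi_G]_G = 0$. Writing
\[
  [\phi_{\min},\pi_G]_G \;=\; \sum_{M \in \Irr G}[\phi_{\min}, \chi_M \bar\chi_M]_G,
\]
each summand is a non-negative integer, so every summand vanishes; in particular $[\phi_{\min},\tau\bar\tau]_G = 0$ for the specific $\tau$ at hand. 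Thus $\tau\bar\tau$ contains $1_G$ but not $\phi_{\min}$, and the conclusion holds.

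Next I would handle $\varrho = \tau^2$. By basic character theory, $[1_G,\tau^2]_G = [\tau,\bar\tau]_G$, which is $1$ if $\tau$ is self-dual and $0$ otherwise. If $\tau$ is not self-dual, $1_G$ is not a constituent of $\tau^2$ and there is nothing to prove. If $\tau$ is self-dual, then $\tau = \bar\tau$, whence $\tau^2 = \tau\bar\tau$, and the previous paragraph shows $\phi_{\min}$ does not occur. Either way, $\tau^2$ cannot contain both $1_G$ and $\phi_{\min}$.

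The entire argument rests on the identification $\Pi_G \cong \bigoplus_M M \otimes M^{*}$ and on Theorem \ref{main1}; there is no genuine obstacle. The only subtle point worth spelling out in the writeup is the passage from $[\phi_{\min},\pi_G]_G = 0$ to the vanishing of every individual $[\phi_{\min},\chi_M \bar\chi_M]_G$, which uses non-negativity of these multiplicities, together with the reduction of the $\tau^2$ case to the $\tau\bar\tau$ case via self-duality of $\tau$.
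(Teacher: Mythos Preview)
Your proposal is correct and follows essentially the same approach as the paper: both use the decomposition $\pi_G = \sum_{\chi}\chi\bar\chi$ together with Theorem \ref{main1} to kill $\phi_{\min}$ in $\tau\bar\tau$, and both reduce the $\tau^2$ case to the $\tau\bar\tau$ case via the identity $[1_G,\tau^2]_G = [\bar\tau,\tau]_G$ and self-duality. Your version spells out a couple of points (triviality of $Z(SU_n(q))$ under the hypothesis, and the non-negativity argument) that the paper leaves implicit, but there is no substantive difference.
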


\begin{proof}
Note that the permutation character $\pi$ of the conjugation
action is $\sum_{\chi \in \Irr (G)}\chi\bar\chi$ and it does not
contain $\phi_{min}$ by Theorem \ref{main1}. Hence the assertion
follows if $\varrho = \tau\bar\tau,$ or if $\varrho = \tau^2$ and
$\tau = \bar\tau$. If $\tau \neq \bar\tau$, then
$[1_G,\tau^2]_G=[\bar\tau,\tau]_G=0$.
\end{proof}

\medskip
{\bf Proof of Theorem \ref{main2}}. This follows from Theorem \ref{main1} and
Corollary \ref{cck}.

\affiliationone{G. Heide
\email{gerhardheide@yahoo.co.uk}}
\affiliationtwo{Jan Saxl\\
  Department of Pure Mathematics and Mathematical Statistics\\
  University of Cambridge\\ 
  Cambridge CB3 0WA\\ 
  UK
\email{J.Saxl@dpmms.cam.ac.uk}}
\affiliationthree{Pham Huu Tiep\\
  Department of Mathematics\\ 
  University of Arizona\\ 
  Tucson, AZ 85721\\ 
  USA
\email{tiep@math.arizona.edu}}
\affiliationfour{Alexandre E. Zalesski\\
  Dipartimento di Mathematica e Applicazioni\\ 
  Universit\`a degli Studi di Milano-Bicocca\\ 
  20125 Milano\\ 
  Italy
\email{alexandre.zalesskii@gmail.com}}

\end{document}